\numberwithin{equation}{section}
\numberwithin{figure}{section}
\newtheorem{theorem}{Theorem}[section]
\newtheorem{lemma}[theorem]{Lemma}
\newtheorem{proposition}[theorem]{Proposition}
\newtheorem{corollary}[theorem]{Corollary}
\theoremstyle{definition}
\newtheorem{definition}[theorem]{Definition}
\newtheorem{remark}[theorem]{Remark}
\newtheorem{example}[theorem]{Example}
\newtheorem*{question*}{Question}
\newtheorem*{steps*}{Answer/steps}
\newtheorem*{progress*}{Progress}
\newtheorem*{example*}{Example}
\newtheorem*{remark*}{Remark}
\newtheorem*{remarks*}{Remarks}
\newtheorem*{definition*}{Definition}
\newcommand{\CC}{\mathbb{C}}
\newcommand{\QQ}{\mathbb{Q}}
\newcommand{\RR}{\mathbb{R}}
\newcommand{\ZZ}{\mathbb{Z}}
\newcommand{\FF}{\mathbb{F}}
\newcommand{\PP}{\mathbb{P}}
\newcommand{\pp}{\mathfrak{p}}
\newcommand{\pP}{\mathfrak{P}}
\newcommand{\pQ}{\mathfrak{Q}}
\newcommand{\kbar}{\overline{k}}
\newcommand{\rbar}{\overline{r}}
\newcommand{\GGm}{\mathbb{G}_m}
\newcommand{\GGa}{\mathbb{G}_a}
\newcommand{\mmu}{\mu}
\DeclareMathOperator{\Aut}{Aut}
\DeclareMathOperator{\Gal}{Gal}
\DeclareMathOperator{\cha}{char}
\DeclareMathOperator{\Jac}{Jac}
\DeclareMathOperator{\Hom}{Hom}
\newcommand{\Nm}{\mathrm{Nm}^{q}_{k}}
\def\phi{\varphi}
\def\theta{\vartheta}
\begin{document}

\title{Cubic function fields with prescribed ramification}

\author[Karemaker]{Valentijn Karemaker}
\address{
  Valentijn Karemaker,
  Mathematical Institute, Utrecht University,
  P.O. Box 80010, 3508 TA Utrecht, the Netherlands
}
\email{V.Z.Karemaker@uu.nl}

\author[Marques]{Sophie Marques}
\address{
  Sophie Marques,
  Mathematics Division,
  Stellenbosch University,
  Mathematics/Industrial Psychology Building,
  Merriman Street,
  7600 Stellenbosch,
  South Africa
}
\email{smarques@sun.ac.za}

\author[Sijsling]{Jeroen Sijsling}
\address{
  Jeroen Sijsling,
 Institut für Algebra und Zahlentheorie, 
  Universität Ulm,
  Helmholtzstrasse 18,
  89081 Ulm,
  Germany
}
\email{jeroen.sijsling@uni-ulm.de}

\subjclass[2010]{11R16, 11R58; 11R11, 14H05, 14H10}
\keywords{Cubic function fields, ramification, families, explicit aspects}

\begin{abstract}
  This article describes cubic function fields $L / K$ with prescribed ramification, where $K$ is a rational function field. We give general equations for such extensions, an explicit procedure to obtain a defining equation when the purely cubic closure $K' / K$ of $L / K$ is of genus zero, and a description of the twists of $L / K$ up to isomorphism over~$K$. For cubic function fields of genus at most one, we also describe the twists and isomorphism classes obtained when one allows Möbius transformations on $K$. The article concludes by studying the more general case of covers of elliptic and hyperelliptic curves that are ramified above exactly one point.
\end{abstract}

\maketitle

\section*{Introduction}

Let $k$ be a perfect field, let $K$ be a function field over $k$, and let $L/K$ be a (geometric and separable) cubic extension of $K$. If $K = k (x)$, then a cubic extension of $K$ is often also called a \emph{cubic function field}. Such fields have been studied extensively in the literature, which has led to several classifications and descriptions of these extensions and their minimal polynomials, especially when $k$ is a finite field. For instance, cubic (and occasionally more general dihedral) function fields of fixed (resp.~bounded) discriminant are constructed and counted in \cite{pohst1, pohst2} (using class field theory), \cite{weir} (using an algorithmic implementation of Kummer theory, applicable to a larger class of field extensions), \cite{rozenhart1, rozenhart2} (building on the algorithm of \cite{belabas}), and \cite{scheidler1, scheidler2} (building on results of Shanks for cubic number fields, see e.g.~\cite{shanks-cubic}). In addition, \cite{scheidler3, scheidler4} also systematically study the signature of cubic function fields, and normal forms for their minimal polynomials.

In the present paper we take an approach that classifies extensions with a given set of ramified places instead of those with a given discriminant. As a first step towards such a classification we give a new and simple criterion (Theorem \ref{thm:norms}) for the existence of a cubic extension with given ramification. To phrase this criterion, instead of using the quadratic resolvent field defined by the discriminant of $L/K$ as in \cite{weir}, we use the notion of a purely cubic closure from \cite{MWcubic, MWcubic2}. This is a special instance of the more generally defined \emph{reflection field} \cite{cohenetal}. We briefly recall its definition.

When the characteristic of $k$ is not equal to $3$, the extension $L/K$ admits a generator $y$ whose minimal polynomial over $K$ is either of the form $X^3-3X-\alpha$ or of the form $X^3-\beta$ for some $\alpha, \beta \in K$. In the latter case the extension $L/K$ is called \emph{purely cubic}, and otherwise it is called \emph{impurely cubic}. The papers \cite{MWcubic, MWcubic2} determine standard forms for these extensions and show how to read off the ramification properties from these forms. Moreover, they prove that any impurely cubic extension $L/K$ admits a so-called \emph{purely cubic closure} $K'$, which is a degree two extension of $K$ such that $LK'/K'$ is purely cubic. Section~\ref{sec:notions} recalls that in characteristic not equal to $2$ the purely cubic closure can be obtained by adjoining to $K$ a square root of $-3$ times the discriminant of $L/K$. (Lemma \ref{lem:pcvsdisc}(2) gives a similar but slightly more complicated description in characteristic $2$.)

Theorem~\ref{thm:norms} now states that if $K'$ is of genus zero and $T$ is a given set of places of $K'$, then there exists a cubic extension $L/K$ with purely cubic closure $K'$ and total (that is, triple) ramification precisely at the places in $T$ if and only if all places in $T$ split in the quadratic extension $K'$. We discuss in Remark~\ref{rem:startwithS} how this allows one to find cubic extensions $L/K$ with total ramification precisely at the places in a given set $T$ and partial (that is, double) ramification precisely at the places in another specified set.

The main idea behind the existence result in Theorem~\ref{thm:norms} is to determine an extension $L/K$ by descending from its base change $LK'/K'$ to the purely cubic closure $K'$. A similar result was obtained by Weir in his thesis \cite[Theorem 4.1.12]{weir-thesis} for the more general case of dihedral function fields. We illustrate some advantages of our more geometrically inspired approach, because of which our results also go beyond those in \cite{weir-thesis}. 

Firstly, our approach is careful enough to apply over arbitrary base fields $k$ whose characteristic does not equal $3$. In particular, we do not have to avoid the case where $k$ has characteristic $2$, as \emph{loc. cit.} does.

Secondly, as recorded in Theorem~\ref{thm:unifeq} and Remark~\ref{rem:min}, given a purely cubic closure $K'$ and a total ramification locus $T$, our descent procedure also allows us to construct explicit defining polynomials for such extensions of the simple form $y^3 - 3 c y - \alpha = 0$, where $c \in k$ and where the poles and zeros of $\alpha \in K$ are of minimal order. Moreover, our technique further allows us to determine the number of such extensions up to isomorphism over $\overline{k}$ in Corollary~\ref{cor:allwithclos}. In Lemma~\ref{lem:twistisos} we obtain a simple and explicit description of the \emph{twists} of the extensions, that is, of the $k$-isomorphism classes that constitute a given $\overline{k}$-isomorphism class of cubic extensions.

Thirdly, in Section~\ref{sec:biisotwists} we go one step further in that we also consider the \emph{bi-isomorphism classes} and the corresponding notion of \emph{bi-twists} of cubic extensions $L/K$ by also allowing an isomorphism of $K$. (Precise definitions, as well as their geometric interpretation, are given in Section~\ref{sec:notions}.) We give a full classification of bi-isomorphism classes of cubic extensions $L$ of genus at most one whose purely cubic closure $K' \neq K$ is of genus zero in terms of the genus of $L$ and the ramification locus. Our description also counts the resulting number of bi-isomorphism classes in the case where the base field $k$ is finite, and shows how to find explicit defining equations for all classes. Similar counts of bi-isomorphism classes when $k$ is finite were included in \cite[\S 6.2.2]{weir-thesis}, but only in other special cases (involving function fields of genus one). Moreover, our approach has the advantage of describing all bi-isomorphism classes via explicit parametrizations.

Finally, in Section~\ref{sec:parshin} we consider an important case where $K$ itself is \emph{not} of genus zero, let alone a rational function field (an assumption which is made in all of \cite{weir-thesis}), by constructing so-called \emph{Parshin covers}. These are cubic extensions of genus one and genus two function fields $K$ over general fields $k$ with ramification at a single place of degree one. They played a role in Parshin's reduction of the Mordell conjecture to the Shafarevich conjecture and have also appeared in Lawrence--Venkatesh' new proof of this conjecture, as discussed in \cite{parshin}. The proof of our main Theorem~\ref{thm:norms} is sufficiently general to apply to this case as well and gives an explicit description of such Parshin covers. This shows the continued relevance of our alternative approach beyond the (already important) case where the purely cubic closure~$K'$ is of genus zero.

Both Sections \ref{sec:quad} and \ref{sec:pure} contain important preliminaries for our main results. Section \ref{sec:quad} considers isomorphism classes and bi-isomorphism classes of quadratic extensions and determines an explicit description of certain relevant cohomology classes in Proposition \ref{prop:Qbitwistscharne2}. Section~\ref{sec:pure} studies the simpler case where the extension $L/K$ is purely cubic (so $K'=K$); here we can apply Kummer theory to determine the extensions with a given total ramification locus $T$. Theorem \ref{thm:purelycubic} gives corresponding explicit defining equations for $L/K$ when $K = k (x)$ is a rational function field. (A similar but less explicit result holds when $K$ is merely supposed to be of genus zero: see Remark~\ref{rem:conic1}.) More precisely: the purely cubic extensions of $K$ are of the form $y^3 = u \prod_i P_i^{\pm 1}$, where the $P_i$ are polynomials defining the finite places in $T$ and where $u \in k^*$. Using a recursion, Theorem \ref{thm:purelycubic} then determines the number of $\overline{k}$-isomorphism classes of the resulting extensions. The twists of a given extension are governed by the class of the scalar $u$ in $k^* / (k^*)^3$, cf.~Proposition~\ref{prop:pctwists}. In Theorems \ref{thm:biisog0} and \ref{thm:biisog1} we consider the bi-twists of the purely cubic extensions $y^3 = x$ and $y^3 = x (x - 1)$ . Theorem \ref{thm:biisog0} shows that when the base field $k$ is a number field, there exists a bi-twist of the extension defined by $y^3 = x$ that, while of genus zero, is not a rational function field. By contrast, Theorem \ref{thm:biisog1} shows that all bi-twists of $y^3 = x (x - 1)$ have a rational base field.

\section{Fundamental Notions}\label{sec:notions}

We let $k$ be a chosen \textbf{perfect} base field, and assume that its characteristic is unequal to $3$. Let $K$ be a function field over $k$.

\begin{definition}\label{def:cubext}
  A \emph{cubic extension} of $K$ is a separable field extension $L$ of $K$ of degree three that is geometric, i.e., that is not of the form $K \ell$ with $\ell$ a degree-three extension of $k$.
\end{definition}

By virtue of restricting our extensions to those in Definition \ref{def:cubext}, we can also consider all of our objects and morphisms in the language of smooth algebraic curves over $k$. In this way, a cubic extension $L/K$ is nothing but a degree-three separable map $X \to \PP^1_k$ of curves over $k$. This is also reflected in the following definitions.

\begin{definition}
  Let $L_1/K$ and $L_2/K$ be two cubic extensions of $K$. Then $L_1/K$ and $L_2/K$ are called \emph{isomorphic} if they are isomorphic as extensions of $K$. Phrased geometrically in terms of the separable maps $X_1 \to \PP^1_k$ and $X_2 \to \PP^1_k$ corresponding to the extensions $L_1/K$ and $L_2/K$, this means that there should exist a $k$-rational map of curves $\phi$ such that the diagram
  \begin{equation}\label{eq:iso}
    \xymatrix{ X_1 \ar[dr] \ar[rr]^\phi & & X_2 \ar[dl] \\ & \PP^1_k &}
  \end{equation}
  commutes.

  The extensions $L_1/K$ and $L_2/K$ are called \emph{bi-isomorphic} if there exists a pair of field isomorphisms $\iota : L_1 \to L_2$ and $\iota_K : K \to K$ that are compatible with the inclusions of $K$ into $L_1$ and $L_2$. Phrased geometrically, this means that there should exist $k$-rational maps of curves $\phi$ and $\psi$ such that the diagram
  \begin{equation}\label{eq:biiso}
    \xymatrix{ X_1 \ar[d] \ar[r]^{\phi} & X_2 \ar[d] \\ \PP^1_k \ar[r]^{\psi} & \PP^1_k}
  \end{equation}
  commutes.
\end{definition}

\begin{remark}
  Many of the results in this paper are specific to the case where $K = k (x)$ is a rational function field, or, slightly more generally, to the case where $K$ is the function field of a non-trivial conic over $k$. Base function fields $K$ of higher genus are considered in Section \ref{sec:parshin}.
\end{remark}

\begin{definition}
  A cubic extension $L/K$ is called \emph{purely cubic} if it admits a generator with minimal polynomial $X^3 - \beta$ over $K$, and \emph{impurely cubic} otherwise.
\end{definition}

We need the following fundamental notion from \cite{MWcubic2} (but see also \cite{cohenetal}):

\begin{definition}\label{def:closure}
  For any cubic extension $L/K$, we define the \emph{purely cubic closure} of $L/K$ to be the smallest extension $K'$ of $K$ such that $LK'/K'$ is purely cubic. Note that there is indeed a unique smallest such extension of $K$ by \cite[Theorem~3.1]{MWcubic2}.
\end{definition}

\begin{theorem}[{\cite[Theorem 3.1]{MWcubic2}}]\label{thm:cubclos}
  The purely cubic closure $K'$ exists for any cubic extension $L/K$. If $L/K$ is not purely cubic and generated by an element $y$ of $L$ with minimal polynomial $X^3 - 3 X - \alpha$ over $K$, then $K' = K(s)$, where $s$ is a root of the polynomial $X^2 + \alpha X + 1$ and $s + s^{-1} = \alpha$. In particular, we always have $[K': K] \leq 2$.

  Let $L'$ be the extension of $K'$ defined by adjoining a root $w$ of the defining polynomial $w^3 = s$. Then~$L$ is the fixed field of $L'$ under the involution sending $(s, w)$ to $(s^{-1}, w^{-1})$. A generating element for~$L$ is given by the trace $y = w + \sigma (w) = w + w^{-1}$, whose minimal polynomial over $K$ is given by $X^3 -3X - (w^3 + w^{-3})$.
\end{theorem}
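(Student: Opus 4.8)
The plan is to treat the two assertions—existence with the bound $[K':K] \le 2$, together with the explicit description—by an explicit Cardano--Lagrange resolvent construction, and then to establish minimality (and hence uniqueness of the smallest such $K'$) by a discriminant comparison. Since a purely cubic $L/K$ already has $K' = K$, I would assume throughout that $L/K$ is impurely cubic, so that it is presented by a generator $y$ with irreducible minimal polynomial $X^3 - 3X - \alpha$ over $K$, the standard form recalled before the theorem.

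First I would carry out the construction. Motivated by the substitution $y = w + w^{-1}$, introduce $s$ with $s + s^{-1} = \alpha$ and $s \cdot s^{-1} = 1$, i.e.\ a root of the resolvent quadratic $X^2 - \alpha X + 1$, and set $K' = K(s)$ and $L' = K'(w)$ with $w^3 = s$. A direct expansion gives $(w + w^{-1})^3 - 3(w + w^{-1}) = w^3 + w^{-3} = s + s^{-1} = \alpha$, so that $y := w + w^{-1}$ is a root of $X^3 - 3X - \alpha$; thus $K(y) \cong L$ embeds into $L'$ and $LK' = K'(y) \subseteq L'$. Since $[K':K] \le 2$ is coprime to $[L:K] = 3$, the fields $L$ and $K'$ are linearly disjoint over $K$, so $[LK':K'] = 3$; as $LK' \subseteq L'$ and $[L':K'] \le 3$, I get $LK' = L'$ and that $X^3 - s$ is irreducible over $K'$. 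Hence $LK' = L'$ is purely cubic over $K'$, and $[K':K] \le 2$ because $s$ is quadratic over $K$ (indeed $[K':K] = 2$ here, since $s \in K$ would make $L$ itself purely cubic).

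Next I would identify $L$ as a fixed field. Eliminating $s$ shows that the minimal polynomial of $w$ over $K$ is the palindromic sextic $X^6 - \alpha X^3 + 1$, so $w \mapsto w^{-1}$ sends $w$ to another of its roots and therefore defines a $K$-automorphism $\sigma$ of $L' = K(w)$ of order two, which one checks acts as $s \mapsto s^{-1}$. As $y = w + w^{-1}$ is $\sigma$-fixed and $[L':L'^{\sigma}] = 2$ by Artin's theorem, the equality $[L':K] = 6$ forces $L'^{\sigma} = K(y) = L$; the minimal polynomial of the trace $y = w + \sigma(w)$ is then $X^3 - 3X - (w^3 + w^{-3})$, since $\alpha = s + s^{-1} = w^3 + w^{-3}$. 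This yields all the explicit assertions of the statement.

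The hard part will be minimality: showing $K'$ is contained in every extension $M/K$ for which $LM/M$ is (genuinely, degree three) purely cubic, which also gives uniqueness of the smallest such field. In characteristic different from $2$ I would argue through the discriminant: the quadratic $X^2 - \alpha X + 1$ has discriminant $\alpha^2 - 4$, so $K' = K(\sqrt{\alpha^2 - 4})$, and since the discriminant of $X^3 - 3X - \alpha$ is $-27(\alpha^2 - 4)$ one has $K' = K(\sqrt{-3\,\mathrm{disc}(L/K)})$. If $LM = M(v)$ with $v^3 = t$, then $\mathrm{disc}(LM/M) \equiv -27 t^2 \equiv -3 \pmod{(M^*)^2}$, whence $-3\,\mathrm{disc}$ becomes a square in $M$; as this discriminant class in $M^*/(M^*)^2$ is unchanged under the base change $K \to M$, I conclude $\sqrt{-3\,\mathrm{disc}(L/K)} \in M$, i.e.\ $K' \subseteq M$. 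The genuine obstacle is characteristic $2$, where completing the square is unavailable: there I would replace the discriminant invariant by the Artin--Schreier class underlying the characteristic-two description of the purely cubic closure recorded in Section~\ref{sec:notions}, and run the same comparison of invariants to force $K' \subseteq M$.
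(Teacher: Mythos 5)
The paper does not prove this statement: it is imported verbatim from \cite[Theorem 3.1]{MWcubic2}, so there is no internal proof to compare against. Judged on its own terms, your reconstruction is essentially right and follows the route the statement itself suggests. The construction of $K'=K(s)$ and $L'=K'(w)$, the linear-disjointness argument giving $[LK':K']=3$ and the irreducibility of $X^3-s$, the identification of $L$ as the fixed field of $w\mapsto w^{-1}$ via the palindromic sextic $X^6-\alpha X^3+1$ and Artin's theorem, and the minimality argument in characteristic $\neq 2$ via the square class of $-3\,\mathrm{disc}$ (which is exactly the content of Lemma \ref{lem:pcvsdisc}(1)) are all correct and complete. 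One cosmetic point: you work with $X^2-\alpha X+1$ where the statement writes $X^2+\alpha X+1$; the statement is itself internally inconsistent here (a root of $X^2+\alpha X+1$ satisfies $s+s^{-1}=-\alpha$, not $\alpha$), and since both polynomials generate the same quadratic extension this is harmless, but it is worth flagging rather than silently changing.

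The one genuine soft spot is the characteristic-$2$ minimality step, which you only sketch. Two things are missing. First, you cannot simply invoke ``the Artin--Schreier class underlying the characteristic-two description of the purely cubic closure recorded in Section~\ref{sec:notions}'': Lemma \ref{lem:pcvsdisc}(2) and Corollary \ref{cor:pcvsdisc} are \emph{deduced} in the paper from Theorem \ref{thm:cubclos}, so using them here would be circular. Second, the comparison of invariants needs actual content: you must (i) identify a base-change-compatible invariant of the cubic extension --- the natural choice is the Artin--Schreier class of the resolvent, which is Galois-theoretically defined --- (ii) compute that the class of $K(s)/K$ (namely $[1/\alpha]$, from $s^2+\alpha s+1=0$ after the substitution $s=\alpha z$) differs from the resolvent class by the class $[1]$ of $K(\zeta_3)$, which requires computing the resolvent of $X^3+X+\alpha$ directly, and (iii) check that a purely cubic extension $v^3=t$ has resolvent $M(\zeta_3)$, so that the relevant class is trivial. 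Each step is elementary but none is automatic, and without them the claim $K'\subseteq M$ in characteristic $2$ is not established. I would also make explicit, in both characteristics, the standard fact you rely on that the discriminant (resp.\ resolvent) class of a cubic extension is independent of the chosen generator and compatible with linearly disjoint base change.
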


\begin{definition}\label{def:res}
  Let $L/K$ be a cubic extension. We define the \emph{resolvent extension} of $L/K$ to be the smallest extension $Q$ of $K$ for which $L Q / Q$ is Galois. Note that there is indeed a unique smallest such extension of $K$, namely the field corresponding to $A_3 \cap \Gal (L/K) \subset S_3$ under the Galois correspondence.
\end{definition}

\begin{remark}
  Let $L / K$ be a cubic extension, and let $M$ be an overfield of $K$. Then by \cite[Theorem 3.1]{MWcubic2} the extension $L M / M$ is purely cubic if and only if $M$ contains the purely cubic closure $K'$ of $L / K$. Similarly, $L M / M$ is Galois if and only if $M$ contains the resolvent extension $Q$ of $L / K$, since the Galois closure of $L M / M$ is obtained by taking the compositum with $Q M / M$. In other words, ``smallest'' in Definitions \ref{def:closure} and \ref{def:res} can be taken with respect to either degree or inclusion.
\end{remark}

The techniques from \cite{MWcubic} now show us the following.

\begin{lemma}\label{lem:pcvsdisc}
  \begin{enumerate}
    \item Suppose that $\cha (k) \neq 2$. Let $L/K$ be a cubic extension with discriminant $\delta$, so that the resolvent extension $Q$ of $L/K$ equals $K (\sqrt{\delta})$. Then the purely cubic closure $K'$ of $L/K$ is given by $K' = K (\sqrt{-3 \delta})$.
    \item Suppose that $\cha (k) = 2$. Let $L/K$ be a cubic extension whose resolvent extension $Q$ is defined by the polynomial $X^2 + X + \gamma$. Then the purely cubic closure is defined by the polynomial $X^2 + X + \gamma + 1$.
  \end{enumerate}
\end{lemma}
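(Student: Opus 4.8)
The plan is to treat both statements by first reducing to the impurely cubic case and then applying Theorem~\ref{thm:cubclos} to obtain an explicit generator of the purely cubic closure, after which one computes both the resolvent $Q$ and the closure $K'$ as explicit quadratic extensions of $K$ and compares them. If $L/K$ is purely cubic then $K' = K$ while $Q = K(\mu_3)$ by a direct check (the resolvent of $X^3 - \beta$ is the cube-root-of-unity extension), so I would assume $L/K$ impurely cubic with a generator $y$ of minimal polynomial $X^3 - 3X - \alpha$; here necessarily $\alpha \neq 0$, as otherwise the polynomial is inseparable. By Theorem~\ref{thm:cubclos} we then have $K' = K(s)$ with $s^2 + \alpha s + 1 = 0$. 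The single fact underlying both parts is that, as quadratic extensions of $K$, the closure $K'$ and the resolvent $Q$ differ exactly by the cube-root-of-unity extension $K(\mu_3)$; parts~(1) and~(2) are the translations of this into Kummer theory and Artin--Schreier theory respectively.

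For part~(1), with $\cha(k) \neq 2$, I would compute the discriminant of the depressed cubic $X^3 - 3X - \alpha$ from the formula $-4p^3 - 27q^2$ with $p = -3$ and $q = -\alpha$, obtaining $\delta = -27(\alpha^2 - 4)$, so that $Q = K(\sqrt{\delta})$ with $\delta \equiv -3(\alpha^2 - 4)$ modulo squares. On the other hand the quadratic $s^2 + \alpha s + 1$ has discriminant $\alpha^2 - 4$, so $K' = K(\sqrt{\alpha^2 - 4})$. Since $-3\delta \equiv 9(\alpha^2 - 4) \equiv \alpha^2 - 4$ modulo $(K^*)^2$, this yields $K' = K(\sqrt{-3\delta})$, as claimed. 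This also exhibits the $\mu_3$-principle, since $K(\sqrt{-3}) = K(\mu_3)$ in this characteristic.

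For part~(2), with $\cha(k) = 2$, the minimal polynomial becomes $X^3 + X + \alpha$ and quadratic extensions are now governed by the Artin--Schreier map $\wp(x) = x^2 - x$. Substituting $s = \alpha t$ in $s^2 + \alpha s + 1 = 0$ turns it into $t^2 + t + \alpha^{-2} = 0$, so $K'$ has Artin--Schreier class $\alpha^{-2}$. To identify the resolvent class $\gamma$, I would use the two $A_3$-resolvents $\beta_1 = \sum_{\mathrm{cyc}} r_i r_{i+1}^2$ and $\beta_2 = \sum_{\mathrm{cyc}} r_i^2 r_{i+1}$ of the roots: a transposition interchanges them, so $\theta = \beta_1/(\beta_1 + \beta_2)$ is sent to $\theta + 1$, whence $\wp(\theta) = \beta_1 \beta_2/(\beta_1 + \beta_2)^2$ lies in $K$ and cuts out $Q$. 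Using the universal identities $\beta_1 + \beta_2 = 3q$ and $\beta_1 \beta_2 = p^3 + 9q^2$ (valid over $\ZZ$, the latter following from $(\beta_1 + \beta_2)^2 = 9q^2$ and $(\beta_1 - \beta_2)^2 = -4p^3 - 27q^2$) and reducing modulo $2$ with $p = 1$, $q = \alpha$, I obtain $\gamma \equiv (1 + \alpha^2)/\alpha^2 = \alpha^{-2} + 1$. Therefore the class of $K'$ equals $\alpha^{-2} = \gamma + 1$, which is exactly the assertion that $K'$ is defined by $X^2 + X + \gamma + 1$. Again this matches the $\mu_3$-principle, as $K(\mu_3)$ is defined by $X^2 + X + 1$ in characteristic $2$.

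The main obstacle is the characteristic-$2$ resolvent computation in part~(2): there is no square-root discriminant to fall back on, so one must produce the Artin--Schreier generator of $Q$ by hand and carry the relevant symmetric-function identities through a reduction modulo $2$, being careful that $\beta_1 + \beta_2$ does not vanish (which is where $\alpha \neq 0$ enters). I would also take care with the degenerate cases, namely when $L/K$ is purely cubic or already Galois, where one or both of $K'$ and $Q$ collapse to $K$. One could instead try to prove the $\mu_3$-principle uniformly, via the observations that a purely cubic extension becomes Galois and a cyclic cubic extension becomes purely cubic after adjoining $\mu_3$; this gives the inclusions $K' \subseteq Q\,K(\mu_3)$ and $Q \subseteq K'\,K(\mu_3)$, but these alone do not pin down the precise relation $[K'] = [Q] + [K(\mu_3)]$, so an explicit computation (or a cocycle argument in the spirit of \cite{MWcubic}) seems unavoidable in any case.
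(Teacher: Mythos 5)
Your proposal is correct and follows essentially the same route as the paper: reduce to the standard form $X^3 - 3X - \alpha$, read off the purely cubic closure $X^2 + \alpha X + 1$ from Theorem~\ref{thm:cubclos}, compute the resolvent explicitly, and compare the two quadratic classes (Kummer in part~(1), Artin--Schreier in part~(2)). The only difference is that the paper simply cites \cite[Theorem~2.3]{MWcubic} for the discriminant $\delta = -27(\alpha^2-4)$ and for the Artin--Schreier class $(1/\alpha)^2 + 1$ of the resolvent in characteristic $2$, whereas you re-derive these from the classical resolvent identities $\beta_1 + \beta_2 = 3q$ and $\beta_1\beta_2 = p^3 + 9q^2$ — a self-contained but equivalent computation.
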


\begin{proof}
  (1) The discriminant $\delta$ of $f$ equals $-27 \alpha^2 + 108$ (cf.\ \cite[Theorem 2.3]{MWcubic}), and square root of $\delta$ cuts out the resolvent extension by the description in terms of the Galois correspondence in Definition~\ref{def:res}. On the other hand, the purely cubic closure $K' = K (s)$ is defined by the polynomial $X^2 + \alpha X + 1$ (cf. Theorem \ref{thm:cubclos}), which has discriminant $\alpha^2 - 4$.

  (2) Let $X^3 + X + \alpha$ be a minimal polynomial defining $L$ over $K$. By \cite[Theorem~2.3]{MWcubic}, the resolvent extension is defined by $X^2 + X + (1/\alpha)^2 + 1$ (alternatively, it is defined by $X^2 + X + (1/\alpha) + 1$ after changing the Artin--Schreier generator from $z$ to $z + (1/\alpha)$). On the other hand, Theorem \ref{thm:cubclos} shows that the purely cubic closure is defined by $X^2 + \alpha X + 1$. Scaling $X$ gives the defining polynomial $X^2 + X + (1/\alpha)^2$ (which under the same change of generator becomes $X^2 + X + 1/\alpha$).
\end{proof}

\begin{definition}\label{def:compext}
  Let $K$ be a field, and let $Q_1$ and $Q_2$ be two extensions of $K$ of degree at most two. We define the \emph{complementary extension} $Q_3$ to $Q_1$ and $Q_2$ over~$K$ to be the remaining entry in the (possibly degenerate) subfield lattice generated by $Q_1$ and $Q_2$ over $K$. In other words, $Q_3$ is defined as follows:
  \begin{enumerate}
    \item If $Q_1 = Q_2 = K$, then $Q_3 = K$.
    \item If $Q_1 = K$ and $Q_2 \neq K$, then $Q_3 = Q_2$.
    \item If $Q_1 \neq K$ and $Q_2 = K$, then $Q_3 = Q_1$.
    \item If $Q_1 = Q_2$ are both non-trivial extensions of $K$, then $Q_3 = K$.
    \item If $Q_1 \neq Q_2$ are both non-trivial extensions of $K$, then $Q_3$ is the unique subfield of the biquadratic extension $Q_1 Q_2$ of $K$ that does not equal either $Q_1$ or $Q_2$.
  \end{enumerate}
\end{definition}

\begin{remark}\label{rem:comp2}
  Suppose that $K$ is of characteristic not equal to $2$, and let $Q_1 = K (\sqrt{d_1})$ and $Q_2 = K (\sqrt{d_2})$. Then the complementary extension to $Q_1$ and $Q_2$ over~$K$ is given by $K (\sqrt{d_1 d_2})$. Similarly, if~$K$ has characteristic $2$, with extensions~$Q_i$ defined by $x^2 + x = a_i$ for $i = 1,2$, then the complementary extension to $Q_1$ and~$Q_2$ over $K$ is given by $x^2 + x = a_1 + a_2$. Definition \ref{def:compext} gives a unified description of these more concrete but separate constructions.
\end{remark}

The following result generalizes \cite[Corollary 3.2]{MWcubic2}.

\begin{corollary}\label{cor:pcvsdisc}
  The purely cubic closure $K'$ is the complementary extension to the resolvent extension $Q$ and $K (\zeta_3)$ over $K$.
\end{corollary}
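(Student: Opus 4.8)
The plan is to reduce the statement to the explicit descriptions already assembled in Lemma \ref{lem:pcvsdisc} and Remark \ref{rem:comp2}, splitting according to the characteristic of $k$. In both cases all three fields $Q$, $K (\zeta_3)$, and $K'$ are extensions of $K$ of degree at most two---for $Q$ by Definition \ref{def:res}, for $K'$ by Theorem \ref{thm:cubclos}, and for $K (\zeta_3)$ because $\zeta_3$ satisfies the quadratic $\Phi_3 (X) = X^2 + X + 1$ over $K$ (recall that $\cha (k) \neq 3$). Hence the complementary-extension machinery of Definition \ref{def:compext} applies, and it suffices to match square classes, respectively Artin--Schreier parameters, exactly.

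First suppose that $\cha (k) \neq 2$. Then $\zeta_3 = (-1 + \sqrt{-3})/2$, so that $K (\zeta_3) = K (\sqrt{-3})$. Writing $Q = K (\sqrt{\delta})$ as in Lemma \ref{lem:pcvsdisc}(1), Remark \ref{rem:comp2} identifies the complementary extension to $Q = K (\sqrt{\delta})$ and $K (\zeta_3) = K (\sqrt{-3})$ with $K (\sqrt{-3 \delta})$. By Lemma \ref{lem:pcvsdisc}(1) this is exactly $K'$. Note that the formula $K (\sqrt{d_1 d_2})$ depends only on the square classes of $d_1$ and $d_2$, so the argument remains valid in the degenerate cases where $\delta$ or $-3$ is a square (that is, where $Q = K$ or $K (\zeta_3) = K$), matching the corresponding clauses of Definition \ref{def:compext}.

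Now suppose that $\cha (k) = 2$. Here $\zeta_3$ is a root of $X^2 + X + 1$, so $K (\zeta_3)$ is the Artin--Schreier extension with parameter $1$. Writing $Q$ as the extension defined by $X^2 + X + \gamma$ as in Lemma \ref{lem:pcvsdisc}(2), the characteristic-$2$ half of Remark \ref{rem:comp2} gives that the complementary extension to $Q$ and $K (\zeta_3)$ is defined by $X^2 + X + (\gamma + 1)$. By Lemma \ref{lem:pcvsdisc}(2) this polynomial cuts out precisely $K'$, completing the argument.

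The only real subtlety is the bookkeeping in characteristic $2$: one must be consistent about the Artin--Schreier normalization, since $X^2 + X + c = 0$ reads as $x^2 + x = c$ (as $-c = c$), so that the parameter of $K (\zeta_3)$ is $1$ and that of $Q$ is $\gamma$. Once these conventions are fixed, the additive composition law $a_1 + a_2$ of Remark \ref{rem:comp2} reproduces the shift $\gamma \mapsto \gamma + 1$ of Lemma \ref{lem:pcvsdisc}(2), and no genuinely new computation is needed beyond invoking these two results.
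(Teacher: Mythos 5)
Your proof is correct and follows essentially the same route as the paper: split by characteristic, identify $K(\zeta_3)$ with $K(\sqrt{-3})$ (resp.\ the Artin--Schreier extension of parameter $1$), and combine Lemma \ref{lem:pcvsdisc} with Remark \ref{rem:comp2}. Your extra remarks on the degenerate square-class cases and the Artin--Schreier normalization are sound but not needed beyond what the paper records.
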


\begin{proof}
  If $\cha (k) \neq 2$, this is a consequence of Lemma \ref{lem:pcvsdisc}(1), since the quadratic extensions of $K$ defined by $\sqrt{-3}$ and $\zeta_3$ coincide. If $\cha (k) = 2$, this is a consequence of Lemma \ref{lem:pcvsdisc}(2) and Remark~\ref{rem:comp2}.
\end{proof}

Since finite fields admit a unique quadratic extension, we recover the following result as a special case of Corollary \ref{cor:pcvsdisc}. Its first part can also be obtained from \cite[Corollary 4.2]{MWcubic} and \cite[Theorem 5.2]{MWcubic2}, and its second part is also a consequence of \cite[Corollary 3.2]{MWcubic2}.

\begin{corollary}\label{cor:purorgal}
  Suppose that the base field $k = \FF_q$ is finite and that the purely cubic closure $K'$ is a (possibly trivial) constant extension.
  \begin{enumerate}
    \item If $q$ is congruent to $1 \pmod 3$, then $L/K$ is Galois if and only if it is purely cubic.
    \item If $q$ is congruent to $-1 \pmod 3$, then $L/K$ is Galois and impurely cubic if the quadratic resolvent has a root and purely cubic and non-Galois otherwise.
  \end{enumerate}
\end{corollary}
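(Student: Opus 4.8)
The plan is to rephrase the two properties ``Galois'' and ``purely cubic'' as conditions on the resolvent extension $Q$ and the purely cubic closure $K'$, and then to feed these into the complementary-extension description of Corollary \ref{cor:pcvsdisc}. Concretely, by Definition \ref{def:closure} the extension $L/K$ is purely cubic precisely when $K' = K$, while by Definition \ref{def:res} and the remark following it, $L/K$ is Galois precisely when $Q = K$ (here ``the quadratic resolvent has a root'' means a root in $K$, i.e.\ that the resolvent polynomial is reducible, which is exactly $Q = K$). Since $\cha(k) \neq 3$ and $k = \FF_q$, I would next record that $K(\zeta_3) = K$ when $q \equiv 1 \pmod 3$, as then $\zeta_3 \in \FF_q$, whereas $K(\zeta_3)$ equals the unique constant quadratic extension $C := K\FF_{q^2}$ when $q \equiv -1 \pmod 3$. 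These are the only inputs needed besides Corollary \ref{cor:pcvsdisc}, which identifies $K'$ as the complementary extension to $Q$ and $K(\zeta_3)$ in the sense of Definition \ref{def:compext}.

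For part (1), with $q \equiv 1 \pmod 3$ we have $K(\zeta_3) = K$, so cases (1) and (3) of Definition \ref{def:compext} give $K' = Q$ outright, the complementary extension to any $Q$ and $K$ being $Q$ itself. Hence $K' = K$ if and only if $Q = K$, which is precisely the assertion that $L/K$ is purely cubic if and only if it is Galois. Note that this already holds without invoking the constant-extension hypothesis; that hypothesis becomes essential only in part (2).

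For part (2), with $q \equiv -1 \pmod 3$ we have $K(\zeta_3) = C \neq K$, and I would split according to $Q$. If the quadratic resolvent has a root, i.e.\ $Q = K$, then case (2) of Definition \ref{def:compext} yields $K' = C \neq K$, so $L/K$ is Galois and impurely cubic. Otherwise $Q \neq K$ is a genuine quadratic extension, and here the constant-extension hypothesis on $K'$ is what pins things down: since $C$ is the unique constant quadratic extension of $K$, either $Q = C$, whence case (4) gives $K' = K$ so that $L/K$ is purely cubic and (as $Q \neq K$) non-Galois; or $Q$ is a non-constant quadratic extension distinct from $C$, whence case (5) produces for $K'$ the third quadratic subfield of the biquadratic field $QC$. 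The main point to verify, and the only real obstacle, is that this last $K'$ is necessarily non-constant, so that the hypothesis excludes it: writing $Q = K(\sqrt{d})$ in odd characteristic, ``$Q$ non-constant'' means $d \notin \FF_q^* (K^*)^2$, and then $K' = K(\sqrt{cd})$ with $c \in \FF_q^*$ a non-square still has $cd \notin \FF_q^*(K^*)^2$, hence $K'$ is non-constant (the analogous Artin--Schreier shift handles characteristic $2$, cf.\ Remark \ref{rem:comp2}). Thus under the hypothesis only $Q \in \{K, C\}$ occur, and the two resulting alternatives are exactly those stated.
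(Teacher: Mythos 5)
Your argument is correct and is precisely the paper's intended derivation: the paper proves this corollary in one line as a special case of Corollary \ref{cor:pcvsdisc} using the uniqueness of the quadratic extension of a finite field, and your proposal simply fills in the case analysis of Definition \ref{def:compext} together with the observation that $\zeta_3 \in \FF_q$ exactly when $q \equiv 1 \pmod 3$. The extra verification that the complementary extension of a non-constant quadratic $Q$ and the constant quadratic extension is again non-constant is the right point to check and is handled correctly.
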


\begin{remark}
  Corollary \ref{cor:pcvsdisc} recovers the classical Kummer-theoretic result that if $k$ contains a third root of unity, a cubic extension $L/K$ is purely cubic if and only if it is Galois. Contrary to the case of finite base fields described in Corollary \ref{cor:purorgal}, it is possible for a cubic extension $L/K$ with a general field of constants $k$ that does not contain a third root of unity to be both non-Galois and impurely cubic. An example of this is the extension of $K = \QQ (x)$ defined by the polynomial
  \begin{equation}
    X^3 - 3 X - \frac{2 (x^2 + 2)}{x^2 - 2} ,
  \end{equation}
  whose purely cubic closure is the constant extension by $\QQ (\sqrt{2})$ and whose resolvent extension is therefore the constant extension by $\QQ (\sqrt{-6})$.
\end{remark}

We need two more notions to simplify and manipulate cubic extensions:

\begin{definition}
  Let a field extension $K'/K$ be given, and let $L'$ be a purely cubic extension of $K'$. We say that the extension $L'/K'$ \emph{descends} if there exists a cubic extension $L$ of $K$ such that $L K'$ is $K'$-isomorphic to $L'$ and $K'$ is the purely cubic closure of $L/K$. In this case $L/K$ is called a \emph{purely cubic descent} (or simply a \emph{descent}) of $L'/K'$.
\end{definition}

\begin{definition}\label{def:twist}
  Let a cubic extension $L_1/ K$ be given. A second cubic extension $L_2 / K$ is called a \emph{twist} (resp.\ \emph{bi-twist}) of $L_1 / K$ if the cubic extensions $L_1 \kbar$ and $L_2 \kbar$ of $K \kbar$ are isomorphic (resp.\ bi-isomorphic).
\end{definition}

One of the main results of \cite{MWcubic3} is a full description of the ramification of a cubic extension. In particular, this allows the determination of the admissible ramification types once the genus of the cubic extension $L/K$ is specified. In what follows we will instead classify cubic extensions with prescribed admissible ramification: Given a prescribed purely cubic closure $K'$ of $K$, we determine whether there exist purely cubic extensions $L'$ of $K'$ that descend to a cubic extension $L$ of $K$ whose ramification at a fixed set of places is prescribed.

\section{Quadratic Extensions of Genus Zero}\label{sec:quad}

Our results extensively use the purely cubic closure of a cubic extension $L/K$ of a function field~$K$, which is a quadratic extension of $K$ by Theorem~\ref{thm:cubclos}. For this reason, and as an illustration of our methods, we prove more well-known results via an alternative inroad, by classifying the possible \textbf{geometric, separable, genus-zero} quadratic extensions $Q$ of $K$. While none of these results are new, we are not aware of a single place where they can all be found together. To keep the article self-contained, and to make the analogies with the cubic case possible as well as to exploit their implicit calculation of certain cohomology groups in later sections, we have included them along with some short proofs.

First we assume that the characteristic of $k$ is unequal to $2$. Let $\Gamma_k = \Gal (\kbar / k)$ denote the absolute Galois group of $k$. Then Kummer theory makes all three steps above explicit, except for the classification of bi-twists:

\begin{proposition}\label{prop:Qkbar}
  Let $Q / K$ be a quadratic extension over a base field $k$ with $\cha (k) \neq 2$.
  \begin{enumerate}
    \item If $k = \kbar$, then $Q/K$ is determined up to isomorphism by its branch locus, which is of total degree two. Moreover, there is a single bi-isomorphism class of such extensions, represented by $Q = K (y)$ with $y^2 = x$.
    \item Given a ramification locus $S$ of total degree two, there exists an extension $Q/K$ with ramification locus $S$, defined by an equation of the form $y^2 = f$ with $f \in K$. Extensions obtained from different ramification loci are pairwise non-isomorphic.
    \item The isomorphism classes of the twists of an extension $y^2 = f$ are in bijective correspondence with the classes $[ c ] \in k^* / (k^*)^2 = H^1 (\Gamma_k, \mmu_2)$. This correspondence sends a class $[ c ]$ to the extension defined by $y^2 = c f$.
  \end{enumerate}
\end{proposition}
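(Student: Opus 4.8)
The unifying tool is Kummer theory. Since $\cha(k)\neq 2$, the group $\mmu_2=\{\pm 1\}$ lies in $k$, so every separable quadratic extension of $K$ has the form $K(\sqrt{f})=K(y)$ with $y^{2}=f$ for some $f\in K^{*}$, and $K(\sqrt{f_1})\cong K(\sqrt{f_2})$ as extensions of $K$ if and only if $f_1 f_2\in (K^{*})^{2}$; that is, the isomorphism classes of quadratic extensions correspond to the nontrivial classes of $K^{*}/(K^{*})^{2}$. The branch locus of $K(\sqrt{f})/K$ is precisely the set of places $v$ of $K$ at which $\operatorname{ord}_v(f)$ is odd, which depends only on the class of $f$ modulo squares; and since $\cha(k)\neq 2$ the ramification is tame, so by Riemann--Hurwitz the hypothesis that $Q$ has genus zero is equivalent to this branch locus having total degree two. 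I would record these three facts first, since all parts rest on them.

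For (1), with $k=\kbar$ one has $K=\kbar(x)$ and $k^{*}=(k^{*})^{2}$. A genus-zero representative $f$ has odd order at precisely two distinct $\kbar$-points $p_1,p_2$, and I would show that the class of $f$ in $K^{*}/(K^{*})^{2}$ is determined by the pair $\{p_1,p_2\}$: two functions with the same odd-order places have a ratio of even order everywhere, hence differ by a constant times a square, and over $\kbar$ every constant is a square. This yields that the isomorphism class is determined by the branch locus. For the bi-isomorphism statement I would use that $\Aut_k(K)=\PGL_2(\kbar)$ acts $2$-transitively on $\PP^{1}(\kbar)$, hence transitively on unordered pairs of distinct points; moving $\{p_1,p_2\}$ to $\{0,\infty\}$ and applying the isomorphism statement just proved identifies every such extension with $y^{2}=x$, so there is a single bi-isomorphism class.

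For (2), given a Galois-stable reduced effective divisor $S$ of degree two, I would produce an $f$ with odd order exactly along $S$ by finding a function with divisor $S-2P$ for a place $P$ of degree one, which amounts to the relation $[S]=2[P]$ in $\operatorname{Pic}(K)$. When $K$ is rational this holds automatically, as $\operatorname{Pic}(K)\cong\ZZ$ via the degree; for a nontrivial conic there is a parity obstruction in $\operatorname{Pic}(K)/2\operatorname{Pic}(K)$, which I would flag and defer to Remark~\ref{rem:conic1}. The resulting $K(\sqrt{f})$ then has branch locus $S$, and extensions from distinct loci are non-isomorphic because the branch locus is an isomorphism invariant by the first paragraph.

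For (3), fix $Q_0=K(\sqrt{f})$. Base-changing to $\kbar$ and invoking part~(1), a quadratic extension $Q$ is a twist of $Q_0$ if and only if it shares the branch locus $S$ of $Q_0$. Any such $Q=K(\sqrt{g})$ then has $\operatorname{div}(g/f)$ even at every place, say $\operatorname{div}(g/f)=2G$; the genus-zero structure makes $G$ principal, so $g/f=c\,h^{2}$ with $c\in k^{*}$ and $h\in K^{*}$, whence $Q=K(\sqrt{cf})$. Thus $[c]\mapsto K(\sqrt{cf})$ surjects onto the twists, and it is injective because $K(\sqrt{c_1 f})\cong K(\sqrt{c_2 f})$ forces $c_1 c_2\in k^{*}\cap (K^{*})^{2}=(k^{*})^{2}$, using that $k$ is algebraically closed in $K$. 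Composing with the standard Kummer isomorphism $k^{*}/(k^{*})^{2}\cong H^{1}(\Gamma_k,\mmu_2)$ gives the stated bijection. I expect the main obstacle to be the twist step in (3): showing that an everywhere-even divisor comes from a constant times a square, together with the reduction (through part~(1)) that the branch locus is a complete invariant for geometric isomorphism. The existence argument in (2) is the other delicate point, since controlling $\operatorname{Pic}(K)/2\operatorname{Pic}(K)$ is exactly where the rational-versus-conic dichotomy enters.
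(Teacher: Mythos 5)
Your proof is correct and follows exactly the route the paper intends: the paper gives no written proof for this proposition, stating only that ``Kummer theory makes all three steps explicit,'' and your argument is precisely the standard Kummer-theoretic one (classes in $K^*/(K^*)^2$, branch locus read off from odd-order places, $\PGL_2$-transitivity on pairs for the bi-isomorphism statement, and the constant-times-square analysis for twists). The extra care you take with the rational-versus-conic dichotomy in part (2) is consistent with the paper's Remark on conics and does not change the argument.
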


To classify bi-twists, we first prove the following proposition, which will give a concrete description of a cohomology group that we will also encounter in Section~\ref{sec:pure}.

\begin{proposition}\label{prop:H1cp}
  Let $(Q_1, D_1)$ and $(Q_2, D_2)$ be two conics with effective divisors of degree two. Then $(Q_1, D_1)$ and $(Q_2, D_2)$ are isomorphic if and only if $Q_1$ and $Q_2$ are isomorphic and the splitting fields of the divisors $D_1$ and $D_2$ coincide.
\end{proposition}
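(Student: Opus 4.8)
The forward implication is immediate: an isomorphism of pairs $(Q_1,D_1) \isomto (Q_2,D_2)$ is in particular an isomorphism $Q_1 \isomto Q_2$ of conics over $k$, and, being defined over $k$, it is $\Gamma_k$-equivariant and carries the closed points in the support of $D_1$ to those of $D_2$ with equal residue fields, so the splitting fields agree. The substance is the converse. Using the hypothesis $Q_1 \cong Q_2$ I identify both with the function field of a single conic $C$, so that the task becomes: given two effective degree-two divisors $D_1,D_2$ on $C$ with the same splitting field $\ell$, produce $\phi \in \Aut_k(C)$ with $\phi_* D_1 = D_2$. Throughout I take the $D_i$ to be reduced, as is the case in our application to branch loci; for non-reduced divisors the statement is false, since a tangency divisor $2P$ and a sum $P' + P''$ of distinct rational points both have splitting field $k$ but are never related by an automorphism of $C$.

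To exhibit the required automorphisms I pass to the anticanonical model, realizing $C$ as a smooth plane conic $\{q = 0\} \subset \PP^2$ for a nondegenerate ternary quadratic form $q$ over $k$, so that $\Aut_k(C) = \mathrm{PGO}(q)(k)$. Since $-K_C \cong \mathcal{O}_C(1)$ has degree two and $\mathrm{Pic}^0(C) = 0$, every effective degree-two divisor on $C$ belongs to the complete linear system $|{-K_C}|$ and is cut out by a unique $k$-rational line $H_u = \{\langle u, -\rangle = 0\}$; this gives a bijection between such divisors and points $[u] \in \PP^{2\vee}$ under which $\Aut_k(C)$ acts through its natural action on lines. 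The splitting field of $C \cap H_u$ is that of the binary form $q|_{H_u}$, whose discriminant is $q^\vee(u)$ up to a fixed nonzero scalar independent of $u$ (here $q^\vee$ is the adjoint form); consequently two reduced divisors have the same splitting field if and only if $q^\vee(u_1) \equiv q^\vee(u_2) \pmod{(k^*)^2}$. Because $q$ has odd rank, every class of $\mathrm{PGO}(q)(k)$ is represented by an isometry in $\mathrm{O}(q)(k)$, so $\Aut_k(C)$ preserves the square class $q^\vee(u) \in k^*/(k^*)^2$, in accordance with the necessity just noted.

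For sufficiency I invoke Witt's extension theorem. Equal splitting fields give $q^\vee(u_1) \equiv q^\vee(u_2) \pmod{(k^*)^2}$, both nonzero since reducedness of the $D_i$ excludes tangent lines; after rescaling $u_1$ I may assume $q^\vee(u_1) = q^\vee(u_2)$, so that $u_1 \mapsto u_2$ is an isometry of one-dimensional subspaces of the nondegenerate space $(k^3, q^\vee)$. Witt's theorem extends it to $g \in \mathrm{O}(q^\vee)(k)$ with $g u_1 = u_2$, and the corresponding automorphism of $C$ in $\mathrm{PGO}(q)(k) = \Aut_k(C)$ carries $H_{u_1}$ to $H_{u_2}$, hence $D_1$ to $D_2$. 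The feature I expect to matter most is the uniformity of this argument: it never uses a rational point of $C$, so the genuinely new case of an anisotropic conic — where $\PGL_2(k)$-transitivity on $\PP^1$ is unavailable — is handled exactly as the split case $C \cong \PP^1_k$ (for which a direct change of affine coordinate would also suffice). The one hypothesis to keep in view is the reducedness of the $D_i$: it forces $q^\vee(u_i) \neq 0$ and thereby separates the secant divisors, classified by their splitting field, from the excluded tangency divisors $2P$.
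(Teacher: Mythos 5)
Your proof is correct, but it takes a genuinely different route from the paper's. The paper reduces to the case $Q_1 = Q_2 = Q$ with the divisors split by a common quadratic extension $q$ of $k$, chooses a parametrization $\varphi : \PP^1 \to Q$ over $q$ adapted to the geometric points of $D_1$ and $D_2$, computes the descent cocycle $\alpha = \sigma(\varphi)^{-1}\varphi$ explicitly as the involution $t \mapsto \lambda/t$ with $\lambda \in k$, and then exhibits the required isomorphism as $\beta = \varphi \gamma \varphi^{-1}$ for an involution $\gamma$ of $\PP^1$ swapping $\{0,\infty\}$ with $\{1,\lambda\}$, verifying Galois invariance of $\beta$ by hand. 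You instead pass to the anticanonical plane model $C = \{q = 0\}$, identify effective degree-two divisors with lines $H_u$, observe that the splitting field is governed by the square class of the adjoint form $q^\vee(u)$, and conclude by Witt extension in $(k^3, q^\vee)$; all the intermediate claims (the bijection with lines, $\Aut_k(C) = \mathrm{PGO}(q)(k) = \mathrm{O}(q)(k)/\{\pm 1\}$ in odd rank, $\mathrm{disc}(q|_{H_u}) \equiv q^\vee(u)$ up to a fixed square class) check out. Your argument is more uniform --- no case split on whether the Galois action on the divisor is trivial, no choice of parametrization over the splitting field --- it isolates the classifying invariant $q^\vee(u) \in k^*/(k^*)^2$ explicitly, and it correctly flags the tacit reducedness hypothesis, which the paper's proof also uses when it sends $\infty$ and $0$ to the two distinct points $a_1, b_1$ and which is automatic in the intended application to branch divisors in characteristic different from $2$. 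What the paper's computation buys in exchange is the explicit $1$-cocycle with values in $\GGm \rtimes C_2$, which is reused in the remark following Proposition \ref{prop:H1cp} and in Theorem \ref{thm:biisog0}; your argument, while cleaner as a proof of the statement itself, does not directly produce that cocycle. (Both proofs live under the section's standing assumption $\cha(k) \neq 2$; your discriminant computation would need the Arf invariant in characteristic $2$.)
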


\begin{proof}
  We may reduce to the case $Q_1 = Q_2 = Q$ with non-trivial Galois action. Let $D_1 = a_1 + b_1$ and $D_2 = a_2 + b_2$ over the algebraic closure. Choose a parametrization $\varphi : \PP^1 \to Q$ over the common quadratic extension $q$ of~$k$ with $\varphi (\infty) = a_1, \varphi (0) = b_1, \varphi (1) = a_2$, denoting the coordinate on $\PP^1$ by~$t$. Let $\lambda \in q$ be such that $\varphi (\lambda) = b_2$. Let $\sigma$ be the involution coming from the extension $q$ of $k$. Then $\alpha := \sigma (\varphi)^{-1} \varphi$ sends $\infty, 0, 1$ to $0, \infty, \sigma (\lambda)$ respectively and is therefore the involution $\alpha (t) = \sigma (\lambda) / t$. We get $\sigma (\varphi) = \varphi \alpha$. Now $\sigma (\varphi)$ sends $1$ to $b_2$, whereas $\alpha$ sends $1$ to $\sigma (\lambda)$. We conclude that $\varphi$ maps $\sigma (\lambda)$ to $b_2$, which is also the image of $\lambda$ under $\varphi$. We get that $\sigma (\lambda) = \lambda$ and hence $\sigma (\alpha) = \alpha$.

  Now let $\beta : Q \to Q$ be given by $\beta = \varphi \gamma \varphi^{-1}$, where $\gamma$ is an involution of $\PP^1$ that switches $\left\{ \infty, 0 \right\}$ with $\left\{ 1, \lambda \right\}$. Then $\gamma$ is defined over $k$ and commutes with $\alpha$. Therefore we obtain
\begin{equation}
    \sigma (\beta)
    = \sigma (\varphi) \sigma (\gamma) \sigma (\varphi)^{-1}
    = \varphi \alpha \gamma \alpha^{-1} \varphi^{-1}
    = \varphi \gamma \varphi^{-1} = \beta .
\end{equation}
  The automorphism $\beta : Q \to Q$ is defined over $k$ and sends $D_1$ to $D_2$ by construction.
\end{proof}

\begin{remark}
  The automorphism group of the pair $(\PP^1, \left\{ 0, \infty \right\})$ is $\GGm \rtimes C_2$, with generators given by $x \mapsto \lambda x$ and $x \mapsto 1/x$. Via conjugation, the factor $C_2$ acts on $\GGm$ via inversion. Proposition \ref{prop:H1cp} now furnishes a concrete description of the cohomology group $H^1 (\Gamma_k, \GGm \rtimes C_2)$.
\end{remark}

We can apply the description of the cohomology group thus obtained to our current problem:

\begin{proposition}\label{prop:Qbitwistscharne2}
  The bi-twists of the quadratic extension $y^2 = x$ of $K$ are in bijective correspondence with the isomorphism classes $(Q, D)$ of conics $Q$ with an effective divisor $D$ of total degree two over $k$. Given a pair $(Q, D)$, a corresponding quadratic extension of $K$ is obtained as the quotient $Q \to Q / \iota$. Here $\iota$ is the involution of $Q$ with fixed points $D$.
\end{proposition}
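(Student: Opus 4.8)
The plan is to recognize the bi-twists as the $k$-forms of the geometric configuration underlying $y^2 = x$, to classify these forms by a non-abelian $H^1$, and to match that cohomology set with the pairs $(Q,D)$ using Proposition~\ref{prop:H1cp}. Over $\kbar$ the extension $y^2 = x$ is the double cover $\pi_0\colon Y_0 \to Z_0$ with $Y_0 = Z_0 = \PP^1_{\kbar}$ and $\pi_0(t) = t^2 = x$, totally ramified over the reduced locus $\{0,\infty\}$ on each curve. First I would compute the group of bi-automorphisms \eqref{eq:biiso}, namely the pairs $(\phi,\psi)$ of automorphisms of $Y_0$ and $Z_0$ with $\psi\pi_0 = \pi_0\phi$. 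As $\pi_0$ is dominant, $\psi$ is determined by $\phi$; solving $\phi(t)^2 = \psi(t^2)$ then shows that $\phi$ ranges exactly over the maps $t \mapsto \mu t^{\pm 1}$ that preserve $\{0,\infty\}$. Hence $(\phi,\psi)\mapsto\phi$ identifies the bi-automorphism group with $N := \Aut_{\kbar}(\PP^1,\{0,\infty\}) = \GGm\rtimes C_2$, the induced action $\psi$ on the base being recorded by the homomorphism $\rho\colon N \to N$, $(\mu,\epsilon)\mapsto(\mu^2,\epsilon)$, whose kernel is the deck involution.

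By the standard theory of forms and twists, the bi-twists of $y^2=x$ up to $k$-bi-isomorphism are then classified by $H^1(\Gamma_k, N)$: a cocycle $c$ yields the twisted cover ${}_c Y_0 \to {}_{\rho c} Z_0$. By Proposition~\ref{prop:H1cp} and the Remark following it, $H^1(\Gamma_k, N) = H^1(\Gamma_k, \GGm\rtimes C_2)$ is in bijection with the isomorphism classes of pairs $(Q,D)$, where $Q$ is the conic twisting $\PP^1$ and $D$ the (reduced) degree-two divisor twisting $\{0,\infty\}$. Under this dictionary the source ${}_c Y_0$ is $Q$ and the ramification divisor is $D$; moreover the deck involution is the central element $-1 \in \GGm \subset N$, so it descends to each twist $Q$ as the involution $\iota$ with fixed locus $D$, and the twisted cover becomes the quotient map $Q \to Q/\iota$. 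This is precisely the construction in the statement, and it remains only to verify that its target is $\PP^1_k$.

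The crux is to show that $Z := Q/\iota = {}_{\rho c}\PP^1$ is isomorphic to $\PP^1_k$, so that $Q \to Q/\iota$ is genuinely a quadratic extension of $K = k(x)$ and not of some nontrivial conic. I would argue this through Brauer classes. The inclusion $N \hookrightarrow \PGL_2$ lifts to the group $\widetilde{N} = \GGm^2 \rtimes C_2$ of monomial matrices inside $\GL_2$, compatibly with $1 \to \GGm \to \GL_2 \to \PGL_2 \to 1$, so the conic class of a cocycle is its image under the connecting map $\partial\colon H^1(\Gamma_k, N) \to H^2(\Gamma_k, \GGm) = \mathrm{Br}(k)$. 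Since $\rho$ lifts to $\widetilde{\rho}(a,b;\epsilon) = (a^2, b^2;\epsilon)$, which acts by squaring on the central $\GGm$, functoriality of $\partial$ gives that the conic class of $Z = {}_{\rho c}\PP^1$ is twice that of $Q$. As conic classes are $2$-torsion in $\mathrm{Br}(k)$, this class vanishes and $Z \cong \PP^1_k$. This Brauer-class vanishing is the main obstacle, being exactly what forces the construction to land among extensions of the fixed field $K$.

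Finally I would assemble the bijection. Starting from any pair $(Q,D)$ with $D$ reduced, the two geometric points of $D$ determine a unique involution $\iota$ of $Q_{\kbar}$ fixing them, which is defined over $k$ because $D$ is; by the previous paragraph the quotient $Q \to Q/\iota$ is a quadratic extension $k(Q)/K$ whose base change to $\kbar$ is a double cover of $\PP^1$ totally ramified at two points, hence bi-isomorphic to $y^2 = x$ and thus a bi-twist. Tracing the identifications shows that $c \mapsto (Q,D)$ and $(Q,D) \mapsto (Q \to Q/\iota)$ are mutually inverse, which yields the claimed bijection.
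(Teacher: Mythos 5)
Your proposal is correct and follows essentially the same route as the paper: identify the bi-automorphism group of $y^2 = x$ with $\GGm \rtimes C_2$ and then invoke the description of $H^1(\Gamma_k, \GGm \rtimes C_2)$ by pairs $(Q,D)$ furnished by Proposition~\ref{prop:H1cp}. The only addition is your Brauer-class argument that the quotient $Q/\iota$ is again $\PP^1_k$ (the class of the base form is twice that of $Q$, hence trivial), which is a clean self-contained substitute for the paper's Remark~\ref{rem:PP1base}, where this fact is instead cited from \cite{xarles-towers}.
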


\begin{proof}
  This follows because the bi-automorphism group of the standard extension $y^2 = x$ is isomorphic to $\GGm \rtimes C_2$: the elements $(\lambda, 0)$ act by $(x, y) \mapsto (\lambda^2 x, \lambda y)$, and $(1, 1)$ by $(x, y) \mapsto (1/x, 1/y)$.
\end{proof}

\begin{remark}\label{rem:PP1base}
  Note that the process described in the statement of the proposition always results in a quotient curve isomorphic to $\PP^1$ (cf.\ \cite[Lemma 3.1]{xarles-towers}).
\end{remark}

\begin{remark}
  If $k$ is a finite field, then the bi-isomorphism class of a quadratic extension of $K$ is in fact determined by the Galois structure of its ramification (or branch) locus, as we shall see in Corollary \ref{cor:Qbitwists}. As Proposition \ref{prop:Qbitwistscharne2} and the explicit examples $y^2 = x^2 + 1$ and $-y^2 = x^2 + 1$ over $\RR$ show, this does not remain true over general fields.
\end{remark}

The case where the field $k$ has characteristic $2$ requires different methods. The following result follows from applying the Riemann-Hurwitz formula in its formulation in~\cite[Example 9.4.6]{Vil}:

\begin{proposition}\label{prop:Qbichar2}
  Suppose that $\cha (k) = 2$. Then there is a unique bi-isomorphism class of genus-zero extensions $Q/K$, represented by $Q : y^2 + y = x$.
\end{proposition}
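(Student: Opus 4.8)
The plan is to combine Artin--Schreier theory with the genus formula for Artin--Schreier extensions, and then to normalise by a base Möbius transformation. Since $\cha(k) = 2$ and $Q/K$ is a separable quadratic extension, it is automatically Galois, so Artin--Schreier theory lets me write $Q = K(y)$ with $y^2 + y = f$ for some $f \in K = k(x)$. Here $f$ is only well defined modulo the substitution $y \mapsto y + g$, which replaces $f$ by $f + (g^2 + g)$; I would use this freedom to choose a representative of minimal pole orders, so that at each place $P$ the pole order $m_P$ of $f$ is coprime to $2$ (hence odd).

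Next I would read off the ramification. For such a reduced $f$, each pole $P$ is totally (and wildly) ramified in $Q/K$ with different exponent $(p-1)(m_P+1) = m_P + 1$, and all other places are unramified. Applying Riemann--Hurwitz in the form of \cite[Example 9.4.6]{Vil} over the genus-zero field $K$ gives $2 g_Q - 2 = -4 + \sum_P (m_P + 1)\deg P$, where the sum runs over the poles of $f$. Imposing $g_Q = 0$ forces $\sum_P (m_P + 1)\deg P = 2$. As each summand is at least $2$, there is exactly one ramified place $P_0$, and it satisfies $\deg P_0 = 1$ and $m_{P_0} = 1$; in particular $f$ is non-constant, which is needed in any case, since a constant $f$ would yield a constant (hence non-geometric) or split extension rather than a geometric field extension.

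Finally I would normalise. Since $P_0$ is $k$-rational, an element of $\PGL_2(k)$ moves it to the place at infinity, which is a bi-isomorphism of the base; afterwards the only pole of $f$ is at infinity and has order $1$, so $f = ax + b$ with $a \in k^*$ and $b \in k$. The further base substitution $u = ax + b$, again an element of $\PGL_2(k)$, transforms the equation into $y^2 + y = u$, exhibiting $Q/K$ as bi-isomorphic to the stated model $y^2 + y = x$. (Conversely, this model defines $k(y)$, which is rational and geometric, so the class is non-empty.) The main obstacle is the middle step: invoking the Artin--Schreier different computation correctly with a minimal representative and tracking that the extension remains geometric, so that the place degrees enter Riemann--Hurwitz exactly as written; the concluding Möbius normalisation is then routine.
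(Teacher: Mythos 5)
Your argument is correct and takes essentially the same route as the paper, whose entire proof consists of the citation of the Riemann--Hurwitz formula in its Artin--Schreier formulation from \cite[Example 9.4.6]{Vil}; you have simply made that citation explicit (reduced Artin--Schreier representative with odd pole orders, different exponent $m_P+1$, the genus-zero condition forcing a single rational pole of order one, and a final M\"obius normalisation). No gaps.
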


An elementary result for quadratic extensions that will later find cubic analogues in Section \ref{sec:biisotwists}, and which follows from Artin-Schreier theory, is the following:

\begin{proposition}
  Suppose that $\cha (k) = 2$. Then the isomorphism classes of genus-zero quadratic extensions of $K$ that ramify at $\infty$ are in correspondence with the set $k^* \times k/H$, where $H$ is the additive subgroup of $k$ generated by the elements $\gamma^2 + \gamma$ for $\gamma \in k$. To an element $(\alpha, [ \beta ])$ of $k^* \times k/H$ is associated the isomorphism class of the extension defined by
  \begin{equation*}
    y^2 + y = \alpha x + \beta .
  \end{equation*}
  In particular, when $k$ is finite, the number of these isomorphism classes equals $2\cdot \vert k^\times \vert$. Two such classes are twists of one another if and only if their first entries coincide. Therefore the set of twists of a given isomorphism class is parametrized by $k/H$, and this set is of cardinality two if $k$ is finite.
\end{proposition}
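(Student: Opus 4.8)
The plan is to use Artin--Schreier theory as the characteristic-$2$ analogue of the Kummer-theoretic arguments in Proposition~\ref{prop:Qkbar}. A genus-zero quadratic extension of $K = k(x)$ ramifying at $\infty$ is, by the Riemann--Hurwitz computation in the style of \cite[Example 9.4.6]{Vil}, wildly ramified at exactly one place of degree one, and after a change of generator we may take this place to be $\infty$. Artin--Schreier theory then tells us that such an extension is defined by $y^2 + y = g$ for some $g \in K$, where two choices $g$ and $g'$ give the same extension precisely when $g - g' = \gamma^2 + \gamma$ for some $\gamma \in K$ (i.e.\ they are equal in $K/\wp(K)$, writing $\wp(t) = t^2 + t$). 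First I would argue that, to get ramification precisely at $\infty$ (and genus zero), one may normalize $g$ inside its $\wp(K)$-coset to the linear polynomial $g = \alpha x + \beta$ with $\alpha \in k^*$: any pole of higher order or at a finite place can be removed by subtracting a suitable $\wp(h)$, and $\alpha \neq 0$ is exactly the condition that $\infty$ genuinely ramifies.

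Next I would pin down when two such normal forms $y^2 + y = \alpha x + \beta$ and $y^2 + y = \alpha' x + \beta'$ define \emph{isomorphic} extensions over $K$, i.e.\ isomorphic as extensions fixing $x$. The only freedom is to replace $y$ by $y + \gamma$ with $\gamma \in K$, but preserving the shape $\alpha x + \beta$ with a linear right-hand side forces $\gamma \in k$ (a nonconstant $\gamma$ would reintroduce higher-degree or finite-pole terms). Since $\wp(\gamma) = \gamma^2 + \gamma$ for $\gamma \in k$ only shifts $\beta$ by an element of $H$ and leaves $\alpha$ untouched, this shows the isomorphism classes are parametrized exactly by the pair $(\alpha, [\beta]) \in k^* \times k/H$, giving the stated bijection. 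When $k = \FF_q$ is finite, $H = \wp(k)$ is an index-two additive subgroup (the image of the $\FF_2$-linear map $\wp$, whose kernel is $\FF_2$), so $|k/H| = 2$ and the count of isomorphism classes is $|k^*| \cdot |k/H| = 2 \cdot |k^\times|$, as claimed.

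Finally, for the twist statement I would base-change to $\kbar$ and compare. Two extensions are twists of one another exactly when they become isomorphic over $K\kbar$. Over $\kbar$ the scalar $\alpha \in \kbar^*$ can be absorbed by the $K\kbar$-automorphism $x \mapsto x/\alpha$ of the base combined with a rescaling $y \mapsto \cdots$ — but since twisting requires an isomorphism fixing $x$, the correct comparison is that $y^2+y = \alpha x + \beta$ and $y^2 + y = \alpha' x + \beta'$ become $K\kbar$-isomorphic iff $\alpha = \alpha'$: over $\kbar$ the subgroup $H$ becomes all of $\kbar$ (every element of $\kbar$ is of the form $\gamma^2+\gamma$ since $\kbar$ is algebraically closed), so the $[\beta]$ coordinate collapses while the $\alpha$ coordinate, being the leading coefficient of the Artin--Schreier datum at the ramified place $\infty$, is an isomorphism invariant that survives base change. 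Hence the twists of a fixed class are precisely those sharing the same $\alpha$, and they are parametrized by $k/H$, of cardinality two when $k$ is finite.

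The main obstacle will be the normalization step and the accompanying isomorphism analysis: one must argue carefully that reducing $g$ modulo $\wp(K)$ to the linear form $\alpha x + \beta$ is both always possible and that the residual $K$-automorphism freedom is exactly $\gamma \in k$, not larger. The cleanest way to control this is to track the pole orders at every place and invoke the Riemann--Hurwitz / genus-zero constraint from Proposition~\ref{prop:Qbichar2} to rule out any additional ramification, so that $\infty$ carries the only nontrivial Artin--Schreier conductor and the linear normal form is forced.
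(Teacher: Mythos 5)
Your proof is correct and follows exactly the Artin--Schreier route that the paper itself indicates (the paper states this proposition without a written proof, merely noting that it ``follows from Artin--Schreier theory''). The key points --- that genus zero forces a single ramified place of degree one with conductor bound $m=1$ so that the representative normalizes to $\alpha x+\beta$ with $\alpha\in k^*$, that $\wp(K)\cap k[x]_{\deg\le 1}=\wp(k)=H$ by the pole-order argument, and that over $\kbar$ the $\beta$-coordinate collapses because $\wp(\kbar)=\kbar$ while $\alpha$ persists --- are all present and correctly justified.
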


Summarizing this section, the next corollary follows from Proposition~\ref{prop:Qbitwistscharne2} and Proposition~\ref{prop:Qbichar2}:

\begin{corollary}\label{cor:Qbitwists}
  Let $k$ be a finite field. If $\cha (k) \ne 2$, then there are exactly two bi-isomorphism classes of genus-zero extensions $Q/K$, given by $Q_1 : y^2 = x$ and $Q_2 : y^2 = f$ respectively. Here $f$ is any irreducible polynomial of degree two over $k$.

  If $\cha (k) = 2$, then there is a single bi-isomorphism class, represented by $Q : y^2 + y = x$.
\end{corollary}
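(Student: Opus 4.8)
The plan is to treat the two characteristics separately, deducing the statement from the structural results already established, since the corollary is asserted to follow from Propositions~\ref{prop:Qbitwistscharne2} and~\ref{prop:Qbichar2}. The case $\cha(k) = 2$ is immediate: Proposition~\ref{prop:Qbichar2} already asserts that there is a unique bi-isomorphism class of genus-zero extensions, represented by $y^2 + y = x$, and this applies over any field of characteristic two, in particular over $\FF_q$. All of the work therefore lies in the case $\cha(k) \neq 2$.

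For $\cha(k) \neq 2$, I would first observe that by Proposition~\ref{prop:Qkbar}(1) every genus-zero quadratic extension of $K$ becomes bi-isomorphic to $y^2 = x$ after base change to $\kbar$. Thus the genus-zero extensions $Q/K$ are exactly the bi-twists of $y^2 = x$, and Proposition~\ref{prop:Qbitwistscharne2} identifies their bi-isomorphism classes with the isomorphism classes of pairs $(Q, D)$, where $Q$ is a conic and $D$ is the degree-two fixed locus of the involution realizing the quotient. The task then reduces to enumerating such pairs over $\FF_q$.

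Here I would invoke two arithmetic features of finite fields. First, every smooth conic over $\FF_q$ has a rational point---a ternary quadratic form over a finite field always has a nontrivial zero---so $Q \cong \PP^1$ and there is a single isomorphism class of conics to consider. Proposition~\ref{prop:H1cp} then shows that the isomorphism class of $(\PP^1, D)$ is determined entirely by the splitting field of $D$. Second, since $\cha(k) \neq 2$ an involution of $\PP^1$ has exactly two distinct geometric fixed points, so $D$ is always reduced of degree two; over $\FF_q$ these two points are either both rational, with splitting field $\FF_q$, or form a Galois-conjugate pair, with splitting field $\FF_{q^2}$. As $\FF_q$ admits a unique quadratic extension, these exhaust the possibilities and give distinct classes by Proposition~\ref{prop:H1cp}. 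Matching representatives, the split case is realized by $y^2 = x$ (branched at the rational points $0$ and $\infty$) and the inert case by $y^2 = f$ for any irreducible quadratic $f$ (branched at the degree-two place cut out by $f$).

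The one point requiring care, and hence the main obstacle, is this reduction of $D$ to exactly these two types. One must confirm that no ``double point'' case can occur, which rests precisely on the fact that in characteristic different from two the fixed locus of an involution of $\PP^1$ is reduced; this is where the hypothesis $\cha(k) \neq 2$ enters, and it is also what separates the count from the genuinely different behaviour in characteristic two covered by Proposition~\ref{prop:Qbichar2}.
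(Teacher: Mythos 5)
Your argument is correct and follows exactly the route the paper intends: the paper simply states that the corollary follows from Propositions~\ref{prop:Qbitwistscharne2} and~\ref{prop:Qbichar2}, and you supply precisely the implicit steps (every conic over a finite field has a rational point, Proposition~\ref{prop:H1cp} reduces the classification to the splitting field of $D$, and a finite field has a unique quadratic extension). Your added care about the reducedness of the fixed locus in characteristic $\neq 2$ is a correct and worthwhile detail that the paper leaves unstated.
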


\begin{remark}\label{rem:ratptinf}
  If $k$ is a finite field with $\cha (k) \ne 2$, then we can normalize $Q_2$ further by taking $f = x^2 - d$ with $d$ a preferred non-square in $k$. Moreover, the conic $Q_1$ (resp.\ $Q_2$, resp.\ $Q$) above then admits the rational point $(1 : 0 : 0)$ (resp.\ $(1 : 1 : 0)$, resp.\ $(1 : 0 : 0)$) above infinity.
\end{remark}

\section{The Purely Cubic Case}\label{sec:pure}

In this section we describe purely cubic extensions $L/K$ of a rational function field $K = k (x)$. We also classify their twists, as well as their bi-twists for purely cubic extensions of genus zero or one.

Assume that the characteristic of $k$ is unqual to $3$ and let $L/K$ be a cubic extension with generator~$y$ whose minimal polynomial is $X^3 - \beta \in K[X]$ for some $\beta \in K$. By Kummer theory, the ramified places of $L/K$ are precisely those places (finite or infinite) such that $(v_{\pp}(\beta),3 )= 1$. Writing $\beta = \prod_{i=1}^s \beta_i^{e_i}$, where $e_i=1,2$ and the $\beta_i$ are distinct irreducible polynomials of respective degrees $d_i$, we see that the finite places $\pp$ of  $K$ such that $( v_\pp(\beta),3) = 1$ are precisely the zero divisors $\pp_i$ of~$\beta_i$ in $K$. Moreover, $( v_{\infty}(\beta),3)=1$ if and only if $(d,3)=1$, where $d = \sum_{i=1}^s e_i d_i$ is the degree of $\beta$. (Note that $\deg( \pp_i) =~d_i$ and $\deg(\infty) = 1$.)

\begin{theorem}\label{thm:purelycubic}
Let $T$ be a set of places of $K$ of cardinality $t$.
\begin{enumerate}
\item Suppose that $\infty \not\in T$ and that the places of $T$ correspond to monic polynomials $P_1, \cdots, P_{t}$ which are indexed such that
  \begin{equation}\label{eq:orderPi}
    \deg(P_i) \equiv
    \begin{cases}
      0 \pmod{3} & \text{ for } 1 \leq i \leq s, \\
      -1 \pmod{3} & \text{ for } s+1 \leq i \leq r, \\
      1 \pmod{3} & \text{ for } r+1 \leq i \leq t.
    \end{cases}
  \end{equation}
Then any purely cubic extension $L/K$ which is fully ramified precisely over the places of $T$ admits a generator $y$ satisfying the equation
\begin{equation}
      y^3 = u P_1^{\epsilon_1} P_2^{\epsilon_2} \cdot \ldots \cdot  P_s^{\epsilon_s}\frac{P_{r+1}^{\epsilon_{r+1}} \cdot \ldots \cdots P_t^{\epsilon_t}}{ P_{s+1}^{\epsilon_{s+1}} \cdot \ldots \cdots P_r^{\epsilon_r}} ,
\end{equation}
where $\epsilon_{1}=1$, and $\epsilon_i \in \{ \pm 1 \} \ (i = 2, \ldots, t)$ satisfy $ \sum_{i=s+1}^t \epsilon_i \equiv 0 \pmod 3$, and where $u \in k$ is a unit.
\item
Suppose that $\infty \in T$ and that the $t-1$ finite places in $T$ correspond to monic polynomials $P_1, \ldots, P_{t-1}$ indexed as in \eqref{eq:orderPi}.
Then any purely cubic extension $L/K$ which is fully ramified precisely over the places of $T$ admits a generator $y$ satisfying the equation
\begin{equation}
      y^3 = u P_1^{\epsilon_1} P_2^{\epsilon_2} \cdot \ldots \cdot  P_s^{\epsilon_s}\frac{P_{r+1}^{\epsilon_{r+1}} \cdot \ldots \cdots P_{t-1}^{\epsilon_{t-1}}}{ P_{s+1}^{\epsilon_{s+1}} \cdot \ldots \cdots P_r^{\epsilon_r}} ,
\end{equation}
    where $\epsilon_{1}=1$, $\epsilon_i \in \{ \pm 1 \} \ (i = 2, \ldots, t)$ satisfy $ \sum_{i=s+1}^t \epsilon_i \not\equiv 0 \pmod 3$, and where $u \in k$ is a unit.
  \end{enumerate}
  In both cases, there are $(1/3) 2^s (2^{t - s - 1} - (-1)^{t - s - 1})$ such covers up to isomorphism over $\kbar$.
\end{theorem}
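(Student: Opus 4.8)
The plan is to pass to the algebraic closure and apply Kummer theory directly over $K\kbar = \kbar(x)$, which contains $\zeta_3$. First I would note that since the extensions in question are cyclic of degree three over $K\kbar$, hence Galois, two of them are isomorphic as extensions of $K\kbar$ if and only if they are equal as subfields of a fixed algebraic closure, i.e.\ if and only if the corresponding classes in $(K\kbar)^* / ((K\kbar)^*)^3$ generate the same subgroup; concretely, the extension attached to $\beta$ and the one attached to $\beta'$ are $\kbar$-isomorphic precisely when $[\beta'] = [\beta]^{\pm 1}$. Because $\kbar$ is algebraically closed, $\kbar^* = (\kbar^*)^3$, so the unit $u$ becomes a cube and drops out over $K\kbar$; moreover each $P_i$ splits into $\deg(P_i)$ distinct linear factors, and $(K\kbar)^*/((K\kbar)^*)^3 \cong \bigoplus_{a \in \kbar} \ZZ/3\ZZ$ via the valuations at the finite points. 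The class of $\beta$ is therefore recorded by the vector $(n_1, \dots, n_t)$, where $n_i \in \{1,2\}$ is the exponent of $P_i$ modulo $3$; the full ramification over each place of $T$ is exactly the statement that all $n_i$ are nonzero, and no other finite place ramifies because $\beta$ is supported on the $P_i$. The involution $\beta \mapsto \beta^{-1}$ acts by $(n_i) \mapsto (3 - n_i)$, and the normalization $\epsilon_1 = 1$ in the statement is precisely a choice of representative for this involution.

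Next I would translate the behaviour at $\infty$ into a congruence on the $n_i$. The place $\infty$ is unramified if and only if $3 \mid \deg(\beta)$, and reducing $\deg(\beta) = \sum_i (\text{signed exponent of } P_i)\deg(P_i)$ modulo $3$ using the residues of $\deg(P_i)$ prescribed by \eqref{eq:orderPi} shows that the $s$ places with $\deg(P_i) \equiv 0$ contribute nothing, while the remaining $t-s$ places contribute $\pm n_i$. This recovers exactly the conditions in the statement: $\sum \epsilon_i \equiv 0 \pmod 3$ when $\infty \notin T$ and $\not\equiv 0 \pmod 3$ when $\infty \in T$, and it guarantees that $\infty$ ramifies if and only if it is required to.

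It then remains to count the admissible vectors $(n_i) \in \{1,2\}^t$ up to the involution $(n_i)\mapsto (3-n_i)$. The $s$ coordinates indexed by $1 \le i \le s$ are unconstrained, contributing a factor $2^s$, and the remaining $t-s$ coordinates are subject to a single linear congruence modulo $3$. A roots-of-unity filter with $\zeta_3 + \zeta_3^2 = -1$ evaluates the number of solutions of the homogeneous congruence as $\tfrac13\bigl(2^{t-s} + 2(-1)^{t-s}\bigr)$, so the number of admissible vectors equals $2^s \cdot \tfrac13\bigl(2^{t-s} + 2(-1)^{t-s}\bigr)$ (the inhomogeneous case relevant to $\infty \in T$ yields the complementary count over the $t-1$ finite places, which simplifies to the same final answer). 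Since $n_i = 3 - n_i$ has no solution, the involution is fixed-point free, and as the congruence is invariant under it, it acts freely on the admissible set; dividing by $2$ gives
\[
  \tfrac12 \cdot 2^s \cdot \tfrac13\bigl(2^{t-s} + 2(-1)^{t-s}\bigr)
  = \tfrac13 \, 2^s \bigl(2^{t-s-1} - (-1)^{t-s-1}\bigr),
\]
as claimed. The main point requiring care is the first paragraph: establishing cleanly that over $\kbar$ the isomorphism class is captured exactly by the exponent vector modulo inversion, with the scalar $u$ and the splitting of the $P_i$ correctly accounted for. Once this dictionary is in place, the remainder is a short roots-of-unity computation.
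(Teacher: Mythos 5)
Your argument is correct and follows essentially the same route as the paper: both reduce, via Kummer theory over $\kbar(x)$, to counting exponent vectors in $\{\pm 1\}^{t}$ subject to a congruence modulo $3$ coming from the degree condition at infinity, taken up to the fixed-point-free global negation $\beta \mapsto \beta^{-1}$. The only divergence is in the final enumeration, which you evaluate with a roots-of-unity filter as $\tfrac{1}{3}\bigl(2^{t-s} + 2(-1)^{t-s}\bigr)$ whereas the paper solves the same count by diagonalizing a $2 \times 2$ linear recursion; the two computations agree.
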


\begin{proof}
  Index the fully ramified (possibly infinite) places $\mathcal{P}_1, \ldots, \mathcal{P}_t$ of $L/K$ such that
\begin{equation}
    \deg(\mathcal{P}_i) \equiv
    \begin{cases}
      0 \pmod{3} & \text{ for } 1 \leq i \leq s, \\
      -1 \pmod{3} & \text{ for } s+1 \leq i \leq r, \\
      1 \pmod{3} & \text{ for } r+1 \leq i \leq t.
    \end{cases}
\end{equation}
  Then there exists a Kummer generator $z$ for $L/K$ such that
\begin{equation}
    \mathrm{div}(z^3) =  \frac{ \prod_{i=1}^{r} \mathcal{P}_i^{\epsilon_i}}{ \prod_{i= r+1}^t \mathcal{P}_i^{\epsilon_i}}
\end{equation}
  such that $\sum_{i=s+1}^t \epsilon_i \equiv 0 \pmod 3$.
  (Another valid Kummer generator is obtained by mapping $\epsilon_i \mapsto -\epsilon_i$ for all $i = 1, \ldots, t$.)

  When $t=s$, the statement of the theorem follows by identifying $P_i \mapsto \mathcal{P}_i$, which determines the minimal polynomial of the generator up to a unit $u$.

  So from now on we assume that $t-s \geq 1$ and set up a linear recursion: Let $E_k$ be the number of tuples $(\epsilon_1, \ldots, \epsilon_k)$ with $\epsilon_i = 1, -1 \pmod 3$ such that $\sum_i \epsilon_i \equiv 0 \pmod 3$, and $F_k$ the number of tuples with $\sum_i \epsilon_i \equiv 1, -1 \pmod 3$. Then $E_1 = 0$ and $F_1 = 2$, and
\begin{equation}
    E_{i+1} = F_i \text{ and } F_{i+1} = 2 E_i + F_i
\end{equation}
  for $i \geq 1$.
  This gives a matrix $\left( \begin{smallmatrix}0 & 1 \\ 2 & 1 \end{smallmatrix} \right)$ with eigenvalues $2$ and $-1$; we may write
\begin{equation}
    \left( \begin{smallmatrix}0 & 1 \\ 2 & 1 \end{smallmatrix} \right) = -\frac{1}{3} \left( \begin{smallmatrix}-1 & 1 \\ 1 & 2 \end{smallmatrix} \right) \left( \begin{smallmatrix}-1 & 0 \\ 0 & 2 \end{smallmatrix} \right) \left( \begin{smallmatrix} 2 & -1 \\ -1 & -1 \end{smallmatrix} \right).
\end{equation}
  Then our recurrence is solved by
\begin{equation}
    \left( \begin{smallmatrix} E_k \\ F_k \end{smallmatrix} \right) = -\frac{1}{3} \left( \begin{smallmatrix}-1 & 1 \\ 1 & 2 \end{smallmatrix} \right) \left( \begin{smallmatrix}(-1)^{k-1} & 0 \\ 0 & 2^{k-1} \end{smallmatrix} \right) \left( \begin{smallmatrix} 2 & -1 \\ -1 & -1 \end{smallmatrix} \right)\left( \begin{smallmatrix} E_1 \\ F_1 \end{smallmatrix} \right) = \frac{2}{3}\left( \begin{smallmatrix} 2^{k-1} - (-1)^{k-1} \\ 2^k - (-1)^k \end{smallmatrix} \right),
\end{equation}
  showing in particular that there exist solutions for any $k$.
  Again identifying $P_i \mapsto \mathcal{P}_i$, we see that now $\epsilon_i \in \{1,-1\}$ for $1 \leq i \leq s$ may be chosen arbitrarily, while the above recursion confirms that there exist compatible choices of $\epsilon_i$ for $s+1 \leq i \leq t$, which determine the extension up to a unit $u$ as before.
  \end{proof}

\begin{remark}\label{rem:conic1}
  Using the theory of divisors, one can show that the number of covers up to twists is unchanged when $K$ is the function field of a non-trivial conic over $k$ instead of a rational function field over $k$. We have restricted ourselves to $K = k (x)$ for simplicity. For the function field $k (C)$ of a general conic $C$ over $k$, we proceed as in Case (1) of Theorem \ref{thm:purelycubic}, replacing $P_i$ with places $D_i$: under the given hypotheses on the $\epsilon_i$ there then exists a function $f \in k (C)$ whose divisor of poles and zeros contains exactly the $D_i$, and we can take equations $y^3 = u f$ for the corresponding cubic extensions.
\end{remark}

As in the case of Proposition~\ref{prop:Qkbar}, one shows the following proposition using Kummer theory.

\begin{proposition}\label{prop:pctwists}
  The isomorphism classes of the twists of an extension $y^3 = f$ are in bijective correspondence with the classes $[ c ] \in k^* / (k^*)^3 = H^1 (\Gamma_k, \mmu_3)$. This correspondence sends a class $[ c ]$ to the extension defined by $y^3 = c f$.
\end{proposition}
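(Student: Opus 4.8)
The plan is to deploy the same Galois-descent framework that underlies Proposition~\ref{prop:Qkbar}(3), now for $\mmu_3$ in place of $\mmu_2$. First I would observe that since $\cha(k) \neq 3$ and $k$ is perfect, the base change $L\kbar/K\kbar$ of $L : y^3 = f$ is a cyclic cubic Galois extension, and its group of $K\kbar$-automorphisms is $\Aut_{K\kbar}(L\kbar) = \mmu_3$, acting on the Kummer generator by $\sqrt[3]{f} \mapsto \zeta \sqrt[3]{f}$. By the general theory of forms, the set of twists of $L/K$ is then classified by the pointed cohomology set $H^1(\Gamma_k, \mmu_3)$. The Kummer exact sequence $1 \to \mmu_3 \to \GGm \to \GGm \to 1$ together with Hilbert~90 identifies $H^1(\Gamma_k, \mmu_3)$ with $k^*/(k^*)^3$, which already produces the abstract bijection asserted in the statement.

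Next I would make this bijection explicit and match it with the recipe $[c] \mapsto (y^3 = cf)$. Given $[c] \in k^*/(k^*)^3$, choose $\gamma \in \kbar^*$ with $\gamma^3 = c$; then the assignment $\sqrt[3]{cf} \mapsto \gamma\sqrt[3]{f}$ defines a $K\kbar$-isomorphism $L_c\kbar \isomto L\kbar$, exhibiting $L_c : y^3 = cf$ as a twist of $L$. The associated cocycle $\sigma \mapsto \sigma(\gamma)/\gamma \in \mmu_3$ is exactly the Kummer cocycle attached to the class $[c]$, so the explicit correspondence agrees with the cohomological one.

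It then remains to check that $[c] \mapsto (y^3 = cf)$ is well defined and bijective on isomorphism classes. Well-definedness is immediate: if $c' = c\,d^3$ with $d \in k^*$, then $y \mapsto y/d$ is a $K$-isomorphism $L_{c'} \isomto L_c$ defined over $k$. Surjectivity holds because every element of the geometric automorphism group $\mmu_3$ scales the Kummer generator, so every twist is again purely cubic of the form $y^3 = cf$. For injectivity, a $K$-isomorphism $L_c \isomto L_{c'}$ forces $cf/(c'f) = c/c'$ to be a cube in $K^*$; since $c/c' \in k^*$ and $k$ is algebraically closed in $K = k(x)$, a constant is a cube in $k(x)$ precisely when it is a cube in $k$, whence $[c] = [c']$.

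The only genuinely delicate point is bookkeeping the $\Gamma_k$-module structure: when $\zeta_3 \notin k$ the extension $L/K$ is not Galois over $K$, so one cannot apply Kummer theory over $K$ directly and must instead argue geometrically over $\kbar$ with the nontrivial module $\mmu_3$. This is exactly what makes the answer $k^*/(k^*)^3 = H^1(\Gamma_k, \mmu_3)$ rather than $\Hom(\Gamma_k, \ZZ/3\ZZ)$, and it is the one structural difference from the quadratic case of Proposition~\ref{prop:Qkbar}(3), where $\mmu_2$ is automatically the trivial module.
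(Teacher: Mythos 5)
Your proof is correct and follows exactly the route the paper intends: the paper gives no details beyond the remark that one argues as in Proposition~\ref{prop:Qkbar} using Kummer theory, and your identification of the twists with $H^1(\Gamma_k,\mmu_3)\cong k^*/(k^*)^3$ via the geometric automorphism group $\mmu_3$ and the Kummer cocycle of $c$ is precisely that argument made explicit. One small elision worth noting: in your hands-on injectivity check, Kummer theory (over $K(\zeta_3)$) a priori also allows $cf\equiv (c'f)^2$ modulo cubes, which must be ruled out using that $f$ is not a constant times a cube because the extension is geometric; this is harmless, however, since injectivity already follows from your cohomological setup once the cocycle of $y^3=cf$ is identified with the Kummer class of $[c]$.
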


We describe bi-isomorphism classes of purely cubic extensions in two cases, namely those where the total ramification locus has degree either two or three. In the former case the extension is isomorphic over $\kbar$ to the one defined by $y^3 = x$.

\begin{theorem}\label{thm:biisog0}
  The bi-twists of the extension $y^3 = x$ of $K$ are in bijective correspondence with the isomorphism classes $(Q, D)$ of conics $Q$ with an effective divisor $D$ of total degree two over $k$. Given a pair~$(Q, D)$, the corresponding twist can be obtained as follows:
  \begin{enumerate}
    \item Let $q$ be the smallest extension of $k$ splitting $D$, and let $\iota : \PP^1 \to Q$ be a morphism over $q$ that sends $\left\{ 0, \infty \right\}$ to $D$.
    \item We have $\sigma (\iota) = \iota \alpha$, where $\alpha (t) = \lambda / t$ for the affine coordinate $x$ of $\PP^1$ with $\lambda \in k$. Let $\phi_0 : \PP^1 \to \PP^1$ be the map sending $x \mapsto x^3 / \lambda$.
    \item Let $\phi = \iota \phi_0 \iota^{-1}$. Then $\phi : (Q, D) \to (Q, D)$ is a twist of $\phi_0$ that ramifies exactly in $D$.
  \end{enumerate}
  There is only one bi-twist that is purely cubic, which is the trivial one. The bi-twists that are Galois are those for which the splitting field of $D$ equals $k (\zeta_3)$.
\end{theorem}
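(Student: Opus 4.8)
The plan is to reduce the classification to Galois cohomology by computing the bi-automorphism group of the standard cover over $\kbar$ and recognizing it as the group already analyzed in the quadratic case. Realize $y^3 = x$ geometrically as the cube map $\pi \colon \PP^1 \to \PP^1$, $t \mapsto t^3$, whose ramification (and branch) locus is $\{0, \infty\}$. A bi-automorphism is a pair $(\phi, \psi)$ with $\pi \phi = \psi \pi$; since $\psi$ is an isomorphism, $\phi$ must preserve the ramification locus $\{0, \infty\}$, so $\phi$ lies in the stabilizer $\GGm \rtimes C_2 \subset \PGL_2$ of $\{0, \infty\}$, and each such $\phi$ determines $\psi$ uniquely. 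First I would check that every such $\phi$ is admissible: a scaling $t \mapsto \zeta t$ forces $\psi \colon u \mapsto \zeta^3 u$ on the target coordinate $u = t^3$, and an inversion $t \mapsto c/t$ forces $\psi \colon u \mapsto c^3/u$. Hence the bi-automorphism group over $\kbar$ is exactly $\GGm \rtimes C_2$, with $C_2$ acting on $\GGm$ by inversion and $\Gamma_k$ acting naturally on $\GGm(\kbar) = \kbar^*$ and trivially on $C_2$. This is precisely the group, with the same Galois action, that governs the bi-twists of $y^2 = x$ in Proposition~\ref{prop:Qbitwistscharne2}. The bi-twists of $y^3 = x$ are therefore classified by the same set $H^1(\Gamma_k, \GGm \rtimes C_2)$, which Propositions~\ref{prop:H1cp} and~\ref{prop:Qbitwistscharne2} identify with the isomorphism classes of pairs $(Q, D)$, yielding the asserted bijection.

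For the explicit construction I would trace a cocycle representing $(Q, D)$ through this identification. Exactly as in the proof of Proposition~\ref{prop:H1cp}, choosing $\iota \colon \PP^1 \to Q$ over the splitting field $q$ of $D$ with $\iota(\{0, \infty\}) = D$ produces a descent datum $\sigma(\iota) = \iota \alpha$ in the inversion coset, $\alpha(t) = \lambda/t$, and the cocycle relation $\sigma(\alpha)\alpha = \mathrm{id}$ forces $\sigma(\lambda) = \lambda$, i.e.\ $\lambda \in k$. The key point is then to descend the cover itself: the candidate $\phi = \iota \phi_0 \iota^{-1}$ is defined over $k$ precisely when $\phi_0$ commutes with $\alpha$, and a one-line computation shows that $\phi_0 \colon t \mapsto t^3/\lambda$ is the normalization of the cube map with this property, since both $\alpha \phi_0$ and $\phi_0 \alpha$ equal $t \mapsto \lambda^2/t^3$. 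As $\phi_0$ is totally ramified exactly at $0$ and $\infty$, the descended map $\phi$ ramifies exactly along $D = \iota(\{0, \infty\})$, and because $\phi$ is $q$-conjugate to $\phi_0$, which is itself bi-isomorphic to $\pi$ over $q$ (rescale the target by $\lambda$), the map $\phi$ is indeed a bi-twist. This verifies steps (1)--(3).

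It remains to read off which bi-twists are purely cubic and which are Galois; this is where the genuinely new computations lie. For the purely cubic statement I would split on $q$. If $q = k$ then $D$ consists of two rational points, so $Q \cong \PP^1$, and the cocycle lands in $\GGm$ (no geometric branch points are swapped), hence is trivial by Hilbert~90; the cover is the standard $t \mapsto t^3/\lambda$, which is purely cubic with generator $t$ (as $t^3 = \lambda u \in K$) and represents the trivial bi-twist. If $q \neq k$ then $D$ is a single closed place $\pp$ of degree two, and I would argue by degrees that no purely cubic model exists: a relation $w^3 = \beta$ ramified only along $\pp$ would have $v_{\pp}(\beta) = \epsilon \in \{\pm 1\}$ and $v_{Q}(\beta) \equiv 0 \pmod 3$ at every other place, so the principal divisor $\mathrm{div}(\beta)$ would have degree $\equiv 2\epsilon \not\equiv 0 \pmod 3$, a contradiction. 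Thus the trivial bi-twist is the unique purely cubic one.

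Finally, for the Galois criterion I would test when the deck transformation descends. The generator $\delta_0 \colon t \mapsto \zeta_3 t$ of the geometric deck group yields $\delta = \iota \delta_0 \iota^{-1}$, and I would compute $\tau(\delta)$ for $\tau \in \Gamma_k$ using $\sigma(\iota) = \iota \alpha$ and the identity $\alpha \delta_0 \alpha^{-1} = (t \mapsto \zeta_3^{-1} t)$. One finds $\tau(\delta) = \delta$ for $\tau$ fixing $q$ iff $\tau$ fixes $\zeta_3$, and $\tau(\delta) = \delta$ for $\tau$ acting as $\sigma$ on $q$ iff $\tau$ sends $\zeta_3 \mapsto \zeta_3^{-1}$. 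These conditions hold for all $\tau$ exactly when $\zeta_3 \in q$ and every nontrivial element of $\Gal(q/k)$ inverts $\zeta_3$ (vacuous if $q = k$), which is equivalent to $q = k(\zeta_3)$, covering both $\zeta_3 \in k$ (forcing $q = k$) and $\zeta_3 \notin k$. Hence the Galois bi-twists are exactly those whose divisor $D$ has splitting field $k(\zeta_3)$. The hard part throughout is bookkeeping the Galois descent correctly: matching the $\Gamma_k$-action on $\GGm \rtimes C_2$ with the quadratic case so that Proposition~\ref{prop:Qbitwistscharne2} applies verbatim, and, in the final step, disentangling the independent actions of $\Gamma_k$ on $q$ and on $\zeta_3$.
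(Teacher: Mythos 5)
Your proposal is correct, and its backbone is the same as the paper's: identify the geometric bi-automorphism group of $y^3 = x$ as $\GGm \rtimes C_2$ with the same Galois action as for $y^2 = x$, and then invoke Propositions \ref{prop:H1cp} and \ref{prop:Qbitwistscharne2} to identify $H^1(\Gamma_k, \GGm \rtimes C_2)$ with the pairs $(Q, D)$. Where you diverge is in how the three subsidiary claims are justified, and in each case you substitute a self-contained computation for a citation or structural appeal. For the explicit construction in steps (1)--(3), the paper simply cites \cite[Theorem 3.19]{LRS-Explicit}, whereas you verify directly that $\phi_0 : t \mapsto t^3/\lambda$ commutes with the descent cocycle $\alpha(t) = \lambda/t$, which is exactly the point that makes $\phi = \iota \phi_0 \iota^{-1}$ Galois-stable. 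For the purely cubic claim, the paper observes that purely cubic forces the branch locus to have trivial Galois action (so $(Q,D) \cong (\PP^1, \{0,\infty\})$), while you rule out the non-split case by a divisor-degree argument ($\deg \mathrm{div}(\beta) \equiv 2 v_{\pp}(\beta) \not\equiv 0 \pmod 3$); both are valid, and yours has the small virtue of not presupposing the Kummer-theoretic description of the ramification of a purely cubic model over a possibly non-rational conic. For the Galois criterion, the paper appeals to Corollary \ref{cor:pcvsdisc} (the purely cubic closure, here the constant extension by $q$, is complementary to the resolvent and $k(\zeta_3)$, so Galois forces $q = k(\zeta_3)$), whereas you descend the deck transformation $t \mapsto \zeta_3 t$ explicitly through the cocycle, using $\alpha \delta_0 \alpha^{-1} = \delta_0^{-1}$; the two arguments encode the same compatibility between the Galois actions on $q$ and on $\mu_3$, with the paper's version shorter and yours more transparent about where the condition $q = k(\zeta_3)$ comes from. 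One minor imprecision: in the degree argument you may only assume $v_{\pp}(\beta)$ coprime to $3$ rather than $v_{\pp}(\beta) = \pm 1$, but the congruence $2 v_{\pp}(\beta) \not\equiv 0 \pmod 3$ still holds, so nothing breaks.
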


\begin{proof}
  As in Proposition~\ref{prop:Qbitwistscharne2}, the first description follows because the bi-automorphism group of the standard extension $y^3 = x$ is again isomorphic to $\GGm \rtimes C_2$. The second statement follows from \cite[Theorem 3.19]{LRS-Explicit}. Finally, a twist is purely cubic if and only if its branch locus has trivial Galois action, which corresponds to the trivial case where $(Q, D)$ is isomorphic to $(\PP^1, \left\{ 0, \infty \right\})$. In light of Corollary~\ref{cor:pcvsdisc} this implies that a twist is Galois if and only if the splitting field of its branch locus equals $k (\zeta_3)$.
\end{proof}

\begin{corollary}\label{cor:biisog0}
  If the base field $k = \FF_q$ is finite, then the extension $y^3 = x$ has exactly two bi-twists. Exactly one of these twists is Galois, namely the trivial twist if $q \equiv 1 \pmod 3$ and the non-trivial twist if $q \not\equiv 1 \pmod 3$.
\end{corollary}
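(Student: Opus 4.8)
The plan is to feed the classification of Theorem~\ref{thm:biisog0} into the arithmetic of the finite field $k = \FF_q$. That theorem already identifies the bi-twists of $y^3 = x$ with the isomorphism classes of pairs $(Q, D)$, where $Q$ is a conic and $D$ is the branch divisor; since the two branch points $0$ and $\infty$ of $y^3 = x$ are distinct, every bi-twist has $D$ a \emph{reduced} effective divisor of degree two (two distinct geometric points). So I must count these pairs over $\FF_q$ and then read off which of the resulting covers is Galois.

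First I would pin down the conic. Over a finite field every smooth conic has a rational point (by Chevalley--Warning, say), hence is isomorphic to $\PP^1$; thus $Q$ is unique up to isomorphism and contributes nothing further to the count. With $Q$ fixed, Proposition~\ref{prop:H1cp} asserts that the isomorphism class of $(Q, D)$ is determined exactly by the splitting field of $D$. For a reduced degree-two divisor this splitting field is either $\FF_q$ itself (when $D$ is the sum of two distinct rational points) or a quadratic extension of $\FF_q$ (when $D$ is a conjugate pair, i.e.\ a single closed point of degree two). As $\FF_q$ admits a unique quadratic extension $\FF_{q^2}$, there are exactly two possible splitting fields, and both occur --- for instance $\left\{ 0, \infty \right\}$ realizes the split case and any irreducible quadratic the inert case. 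Hence there are exactly two isomorphism classes $(Q, D)$, and therefore exactly two bi-twists.

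Next I would determine which bi-twist is Galois using the final assertion of Theorem~\ref{thm:biisog0}: a bi-twist is Galois precisely when the splitting field of its divisor $D$ equals $k (\zeta_3)$. Since $\zeta_3 \in \FF_q$ if and only if $q \equiv 1 \pmod 3$, two cases arise. If $q \equiv 1 \pmod 3$ then $k (\zeta_3) = \FF_q$, which selects the split class; this is the trivial twist, as $\left\{ 0, \infty \right\}$ is exactly the branch locus of $y^3 = x$. If instead $q \not\equiv 1 \pmod 3$ then $[k (\zeta_3) : k] = 2$, so $k (\zeta_3) = \FF_{q^2}$, which selects the inert class, i.e.\ the non-trivial twist. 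In either case exactly one of the two bi-twists is Galois, and it is the trivial one precisely when $q \equiv 1 \pmod 3$, as claimed.

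The only genuinely delicate point is the counting in the second step: one must check that Proposition~\ref{prop:H1cp} applies with the \emph{split} conic $Q = \PP^1$ (its proof is phrased via a reduction to the case of non-trivial Galois action on $Q$, which does not occur over $\FF_q$), and that the branch divisor $D$ is honestly reduced, so that no double-point class intervenes and the splitting field is indeed the only invariant. For $Q = \PP^1$ this determination amounts to the classical transitivity of $\PGL_2 (\FF_q)$ on reduced degree-two divisors of each splitting-field type (through $3$-transitivity in the split case and a non-split torus in the inert case). Once this is in place, the remainder is routine bookkeeping with $\zeta_3$ and the unique quadratic extension of $\FF_q$.
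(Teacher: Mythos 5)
Your proof is correct and follows essentially the same route as the paper: both deduce the count of two bi-twists from Theorem~\ref{thm:biisog0} together with the facts that every conic over $\FF_q$ is a projective line and that $\FF_q$ has a unique quadratic extension. For the Galois statement you invoke the splitting-field criterion at the end of Theorem~\ref{thm:biisog0} where the paper cites Corollary~\ref{cor:purorgal}, but these rest on the same underlying fact (Corollary~\ref{cor:pcvsdisc}), so the arguments are equivalent.
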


\begin{proof}
  The first statement follows from Theorem \ref{thm:biisog0} because any conic over $k$ is isomorphic to a projective line, combined with the fact that $k$ admits a unique non-trivial quadratic extension. The second follows by using Corollary \ref{cor:purorgal}.
\end{proof}

\begin{remark}
  In Proposition \ref{prop:33} we will find an explicit defining equation for the non-trivial impurely cubic twist in Corollary \ref{cor:biisog0} by using the methods from Section~\ref{sec:impure}.

  When $Q \not\cong \PP^1$, we obtain bi-twists for which the base curve is no longer a projective line. Explicit equations can then be obtained by using \cite[Proposition~3.16]{LRS-Explicit}.
\end{remark}

Now consider the second case, for which the total ramification locus has degree three. In this case, the extension is isomorphic over $\kbar$ to that defined by $y^3 = x (x - 1)$. We again determine the corresponding bi-twists.

\begin{proposition}\label{prop:333biaut}
  The bi-automorphism group of $y^3 = x (x - 1)$ is isomorphic to $\mu_3 \times S_3$. Here $\mu_3$ acts via $(x, y) \mapsto (x, \zeta_3 y)$, whereas the generators $(0 \, \infty)$ and $(1 \, \infty)$ of $S_3$ act via $(x, y) \mapsto (1/x, -y/x)$ and $(x, y) \mapsto (x / (x - 1), y / (x - 1)$, respectively.
\end{proposition}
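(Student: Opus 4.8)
The plan is to realize the bi-automorphism group $G$ of $y^3 = x(x-1)$ as a group extension of the permutations of the branch locus by the deck group of the cover. First I would record that this extension is purely cubic with $\beta = x(x-1)$, so by the Kummer ramification criterion recalled at the start of this section its branch locus on $\PP^1$ is exactly $\{0, 1, \infty\}$: the two finite zeros of $\beta$, together with $\infty$, since $\deg \beta = 2$ is coprime to $3$. Any bi-automorphism $(\phi, \psi)$ carries ramified fibres to ramified fibres, so the base map $\psi \in \Aut(\PP^1) = \PGL_2$ must permute $\{0, 1, \infty\}$. As $\PGL_2$ acts sharply $3$-transitively on $\PP^1$, the subgroup preserving this three-element set is precisely the copy of $S_3$ permuting its points, so sending $(\phi, \psi)$ to $\psi$ defines a homomorphism $\rho : G \to S_3$.

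Next I would identify the kernel of $\rho$ as the group of automorphisms of $X$ over $K$, that is, the $K$-automorphisms of $L = K(y)$. Such an automorphism fixes $K$ and sends $y$ to another root of $X^3 - x(x-1)$, hence to $\zeta_3^{\,i} y$; geometrically this is exactly the deck group $\mu_3$ of the cyclic cover, acting by $(x, y) \mapsto (x, \zeta_3 y)$. Thus one obtains an exact sequence $1 \to \mu_3 \to G \xrightarrow{\rho} S_3$, and it remains to show that $\rho$ is surjective and that the sequence splits as a direct product.

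For surjectivity I would exhibit the two displayed lifts of the transpositions generating $S_3$. The map $(x, y) \mapsto (1/x, -y/x)$ covers $x \mapsto 1/x$, which fixes $1$ and swaps $0, \infty$, while $(x, y) \mapsto (x/(x-1),\, y/(x-1))$ covers $x \mapsto x/(x-1)$, which fixes $0$ and swaps $1, \infty$. A direct substitution confirms that each preserves the relation $y^3 = x(x-1)$; for instance $(-y/x)^3 = -x(x-1)/x^3 = (1/x)(1/x - 1)$. Hence both are genuine elements of $G$ mapping onto the generators $(0\,\infty)$ and $(1\,\infty)$ of $S_3$, so $\rho$ is surjective and $|G| = 3 \cdot 6 = 18$.

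Finally I would establish the direct-product structure. Conjugating the generator $(x, y) \mapsto (x, \zeta_3 y)$ of $\mu_3$ by any map of the form $(x, y) \mapsto (\psi(x),\, c(x)\, y)$ returns the same map, because the scalar $c(x)$ cancels; since both displayed lifts, and therefore all of $S_3$, have this shape, $\mu_3$ is central. It then suffices to check that the two lifts satisfy the relations of $S_3$ exactly: each is readily seen to be an involution, and a short computation shows their product is $(x, y) \mapsto ((x-1)/x,\, -y/x)$, which has order three. The chosen lifts therefore generate a complement isomorphic to $S_3$, and together with the central $\mu_3$ this yields $G \cong \mu_3 \times S_3$ with the stated actions. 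The only genuine subtlety is this last point, namely ensuring that the natural lifts realize $S_3$ on the nose rather than some nonsplit central extension; I address it by verifying the involution and order-three relations explicitly (equivalently, one may observe that $H^2(S_3, \mu_3)$ vanishes, which forces any such central extension to split).
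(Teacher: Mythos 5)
Your proof is correct and follows essentially the same route as the paper's: both arguments observe that any bi-automorphism must permute the branch locus $\{0,1,\infty\}$ (giving the map to $S_3$), that the kernel consists of the maps $y \mapsto \zeta_3^i y$, and that the two displayed maps provide explicit lifts of the transpositions. You merely spell out in more detail the verification that the extension splits as a direct product, which the paper subsumes under ``an explicit calculation.''
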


\begin{proof}
   An explicit calculation shows that the claimed maps are bi-automorphisms and generate the indicated group. That these generate the full bi-automorphism group follows from the fact that any bi-automorphism must map the branch locus $\left\{ 0, 1, \infty \right\}$ of $(x, y) \mapsto x$ to itself, and that a bi-automorphism that fixes $x$ necessarily sends $y$ to $\zeta_3^i y$ for some $i$.
\end{proof}

\begin{theorem}\label{thm:biisog1}
  The bi-twists of the extension $y^3 = x (x - 1)$ of $K$ are in bijective correspondence with the pairs $(L, [ c ])$, where $L$ is an isomorphism class of an extension of degree at most three of $K$ and where $[ c ] \in k^* / (k^*)^3$. Given a pair $(L, [ c ])$, a corresponding twist is given by:
  \begin{enumerate}
    \item $y^3 = c x (x - 1)$ if $L = K$.
    \item $y^3 = c f$, where $f$ is the minimal polynomial of a generator of $L/K$, if $L \neq K$.
  \end{enumerate}
  In particular, all these twists up bi-isomorphism are purely cubic. They are Galois if and only if $k$ contains $\zeta_3$.
\end{theorem}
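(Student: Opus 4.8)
The plan is to realize the bi-twists of $y^3 = x(x-1)$ as a Galois cohomology set and split it into the two factors of the statement. By the standard twisting correspondence already used in Proposition~\ref{prop:Qbitwistscharne2} and Theorem~\ref{thm:biisog0}, the bi-twists are in bijection with the pointed set $H^1(\Gamma_k, G)$, where $G$ is the $\kbar$-bi-automorphism group of the extension. By Proposition~\ref{prop:333biaut} we have $G \cong \mu_3 \times S_3$; here $\Gamma_k = \Gal(\kbar/k)$ acts on $\mu_3$ through the cyclotomic character and trivially on $S_3$, since the generating maps $(x,y)\mapsto(1/x,-y/x)$ and $(x,y)\mapsto(x/(x-1),y/(x-1))$ of the $S_3$-factor have coefficients in the prime field, while the $\mu_3$-factor $(x,y)\mapsto(x,\zeta_3 y)$ involves $\zeta_3$. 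Both subgroups are $\Gamma_k$-stable, so the decomposition is equivariant. I would first record that, because this is a genuine direct product in which the two factors commute, a $1$-cocycle and a coboundary each decompose coordinatewise, whence
\[
  H^1(\Gamma_k, \mu_3 \times S_3) \;\cong\; H^1(\Gamma_k, \mu_3) \times H^1(\Gamma_k, S_3)
\]
as pointed sets. This already produces the two coordinates $([c], L)$.

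Next I would identify the factors. Kummer theory gives $H^1(\Gamma_k, \mu_3) \cong k^*/(k^*)^3$, and twisting the action $(x,y)\mapsto(x,\zeta_3 y)$ by a representative $c$ multiplies the right-hand side of the defining equation, turning $y^3=g$ into $y^3=cg$; this yields the class $[c]$. The set $H^1(\Gamma_k, S_3)$ with trivial action classifies $\Gamma_k$-sets of cardinality three, equivalently isomorphism classes of degree-three étale $k$-algebras; this is precisely the datum $L$ of the statement, with the trivial class (the split branch locus) corresponding to $L = K$. Geometrically, the $S_3$-factor acts on the base $\PP^1$ by the Möbius permutations of $\{0,1,\infty\}$ recorded in Proposition~\ref{prop:333biaut}, so twisting by it permutes the three branch points and the cocycle class is the splitting algebra of the branch divisor. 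I would then make the twist explicit by choosing an affine coordinate $x$ on the twisted base and writing the (Galois-stable, reduced, degree-three) branch divisor as the zero locus of a polynomial $f$: this realizes the twisted cover as $y^3 = f$, and combining with the $\mu_3$-twist gives $y^3 = cf$, where $f = x(x-1)$ in the split case and $f$ is cubic (resp.\ quadratic, when $\infty \in \supp$ of the branch divisor) otherwise.

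The step I expect to be the main obstacle is showing that the twisted base is always a \emph{rational} function field, which is what allows the equation to be written over $K = k(x)$ rather than over the function field of a nontrivial conic. The crux is a parity observation sharply distinguishing this case from Theorem~\ref{thm:biisog0}: the branch divisor is an effective, $\Gamma_k$-stable divisor of \emph{odd} degree three on the twisted conic $Q$, hence a rational divisor of odd degree, forcing $Q(k)\neq\emptyset$ and therefore $Q\cong\PP^1$. (By contrast, in the degree-two situation of Theorem~\ref{thm:biisog0} the even branch degree permits genuinely non-rational conics.) In the reverse direction I would verify that every $y^3 = cf$ of this shape is indeed a bi-twist: over $\kbar$ the scalar $c$ becomes a cube and the three distinct branch points can be moved to $\{0,1,\infty\}$ by a Möbius transformation (as $\PGL_2$ acts sharply $3$-transitively), so $y^3 = cf$ becomes bi-isomorphic to $y^3 = x(x-1)$.

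Finally I would read off the two concluding assertions from the normal form. Each representative $y^3 = cf$ is visibly purely cubic, so every bi-twist is purely cubic up to bi-isomorphism; and by Corollary~\ref{cor:pcvsdisc} a purely cubic extension is Galois exactly when $k$ contains $\zeta_3$, which gives the last claim.
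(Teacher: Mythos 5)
Your proposal is correct and follows essentially the same route as the paper's (much terser) proof: split the bi-automorphism group $\mu_3 \times S_3$ of Proposition \ref{prop:333biaut} into its two Galois-stable factors, identify $H^1(\Gamma_k, \mu_3)$ with $k^*/(k^*)^3$ by Kummer theory, and identify $H^1(\Gamma_k, S_3)$ (trivial action) with isomorphism classes of degree-three $\Gamma_k$-sets and hence with the extensions $L$ via the stabilizer correspondence. Your parity argument that the twisted base must be rational (a Galois-stable effective divisor of odd degree on a conic forces a rational point) is a useful supplement: the paper asserts this fact only later, in Section \ref{sec:biisotwists}, without proof.
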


\begin{proof}
  This result follows by considering the twists corresponding to the factors of $\mu_3 \times S_3$ (cf. Proposition \ref{prop:333biaut}) separately. The factor $\mu_3$ gives rise to multiplication of the polynomial defining the branch locus in $x$ by a scalar $c$. On the other hand, via the indicated automorphisms the cohomology set $H^1 (\Gamma_k, S_3)$ classifies the Galois stable subsets of $\PP^1_{\kbar}$ of degree three that classify the branch locus. By transitivity, two such subsets of $\PP^1_{\kbar}$ are isomorphic if and only if they are isomorphic as $\Gamma_k$-sets, which is the case if and only if they have isomorphic stabilizers under the Galois action. The Galois correspondence between (sub)extensions and stabilizers then yields the result.
\end{proof}

\begin{remark}
  A given cubic extension $y^3 = g$ of $K$ can be transformed explicitly into one of the forms of Theorem \ref{thm:biisog1} as follows. Let $f$ be the polynomial whose splitting field is isomorphic to that of $g$. Label the roots $\alpha_1, \alpha_2, \alpha_3$ of $f$ and $\beta_1, \beta_2, \beta_3$ of $g$ such that the Galois action on them coincides. Then the Möbius transformation mapping $\alpha_i \mapsto \beta_i$ is defined over $k$. Pulling back $g$ by it and absorbing third powers in the denominator if necessary gives a defining equation of the form $y^3 = d f$, which after a transformation in $y$ gives rise to a standard equation $y^3 = c f$ for which the classes of $c$ and $d$ in $k^* / (k^*)^3$ coincide.
\end{remark}

Since a finite field has exactly three isomorphism classes of extensions of degree up to three, we obtain the following:

\begin{corollary}
  If the base field $k = \FF_q$ is finite, then the extension $y^3 = x (x - 1)$ has exactly nine bi-twists if $q \equiv 1 \pmod 3$, and three bi-twists if $q \equiv -1 \pmod 3$.
\end{corollary}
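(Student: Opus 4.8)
The plan is to apply Theorem~\ref{thm:biisog1}, which puts the bi-twists of $y^3 = x(x-1)$ in bijection with the set of pairs $(L, [c])$ consisting of an isomorphism class of an extension of degree at most three of $K$ together with a class $[c] \in k^*/(k^*)^3$. Since this is a genuine bijection onto a product set, the number of bi-twists is simply the product of the number of admissible $L$ and the cardinality of $k^*/(k^*)^3$, so it remains only to compute these two factors for $k = \FF_q$.

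For the first factor I would invoke the remark preceding the corollary or, equivalently, argue directly via the cohomology set $H^1(\Gamma_k, S_3)$ appearing in the proof of Theorem~\ref{thm:biisog1}: the relevant $L$ are classified by the Galois-stable subsets of degree three of $\PP^1_{\kbar}$, that is, by the degree-three $\Gamma_k$-sets up to isomorphism. For a finite field, $\Gamma_k \cong \widehat{\ZZ}$ is topologically generated by the Frobenius, so such a $\Gamma_k$-set is determined up to isomorphism by the conjugacy class of the permutation in $S_3$ through which Frobenius acts. As $S_3$ has exactly three conjugacy classes, namely the identity, the transpositions, and the three-cycles, corresponding to the splitting types $1+1+1$, $1+2$, and $3$, there are exactly three possibilities for $L$, independently of $q$.

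For the second factor, $k^* = \FF_q^*$ is cyclic of order $q-1$, so $k^*/(k^*)^3$ has order $\gcd(3, q-1)$. Since $\cha(k) \neq 3$, this order equals $3$ when $q \equiv 1 \pmod 3$, as then $3 \mid q-1$, and equals $1$ when $q \equiv -1 \pmod 3$, as then cubing is a bijection on $k^*$. Multiplying the two factors yields $3 \cdot 3 = 9$ bi-twists in the former case and $3 \cdot 1 = 3$ in the latter, as claimed.

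The argument is essentially bookkeeping once Theorem~\ref{thm:biisog1} is in hand, so I do not anticipate a serious difficulty; the one point requiring care is to confirm that the correspondence of Theorem~\ref{thm:biisog1} is bijective onto the \emph{full} product set, so that no further identifications occur and the count is genuinely multiplicative. As a consistency check, one can compare with the bi-automorphism group $\mu_3 \times S_3$ of Proposition~\ref{prop:333biaut}, whose two factors mirror precisely the two parameters $[c]$ and $L$ being counted.
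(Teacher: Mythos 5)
Your argument is correct and is essentially the paper's own: the paper likewise deduces the count from Theorem~\ref{thm:biisog1} by multiplying the three isomorphism classes of extensions of degree at most three of a finite field by $|k^*/(k^*)^3| = \gcd(3, q-1)$. Your identification of the first factor with the conjugacy classes of the Frobenius image in $S_3$ is just a slightly more explicit rendering of the same count.
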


\section{Descent from the Purely Cubic Closure}\label{sec:impure}

Let sets of places $S$ and $T$ of $K$ be given, together with a quadratic extension $K'$ of $K$ of genus zero. Write $t = |T|$. The following two results give a necessary and sufficient condition for the existence of a cubic extension of~$K$ that ramifies partially (resp.\ fully) at the places in $S$ (resp.\ $T$) and has purely cubic closure~$K'$.

\begin{theorem}\label{thm:norms}
  There exists a cubic extension $L$ of $K$ such that $K'$ is the purely cubic closure of $L/K$ and with $L$ fully ramified at exactly the places in $T$ if and only if all places in $T$ split in the quadratic genus-zero extension $K'/K$.
\end{theorem}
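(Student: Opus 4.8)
The plan is to reformulate the problem in terms of the purely cubic extension $L' = LK'$ of $K'$, using Theorem~\ref{thm:cubclos} to connect the two. The extension $L/K$ with purely cubic closure $K'$ corresponds, after base change, to a purely cubic extension $L'/K'$ that is stable (up to $K'$-isomorphism) under the nontrivial involution $\sigma \in \Gal(K'/K)$, together with the condition that $L'/K'$ does \emph{not} itself descend from $K$ as a purely cubic extension (which would make $K'$ fail to be the purely cubic closure). Concretely, writing $L' = K'(w)$ with $w^3 = g$ for some $g \in K'$, the descent condition forces $\sigma(g) \equiv g^{-1}$ modulo cubes in $K'^*$, since the involution must act on the $\mu_3$-torsor by inversion (this is exactly the shape $(s,w) \mapsto (s^{-1}, w^{-1})$ recorded in Theorem~\ref{thm:cubclos}). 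I would first make this dictionary precise: producing $L/K$ with the prescribed properties amounts to producing $g \in K'^*$ with $\sigma(g) g \in (K'^*)^3$ whose divisor realizes the prescribed ramification.

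Next I would translate the ramification condition on $L/K$ into a condition on $\mathrm{div}(g)$. By Kummer theory over $K'$ (as in the purely cubic case of Section~\ref{sec:pure}), $L'/K'$ is fully ramified exactly at the places $\mathcal{P}$ of $K'$ with $(v_{\mathcal{P}}(g), 3) = 1$. A place of $K$ in $T$ is totally ramified in $L/K$ precisely when the corresponding place(s) of $K'$ are ramified in $L'/K'$. So the task becomes: find $g \in K'^*$ with $\sigma(g)g \in (K'^*)^3$ whose divisor is supported, modulo cubes, exactly on the places of $K'$ lying above $T$. The natural ansatz is to take, for each place $P \in T$ that \emph{splits} in $K'$ as $P\mathcal{O}_{K'} = \mathfrak{P}\sigma(\mathfrak{P})$, a contribution $\mathfrak{P}\sigma(\mathfrak{P})^{-1}$ (or its inverse) to $\mathrm{div}(g)$; this is anti-invariant under $\sigma$ up to sign, which is exactly what makes $\sigma(g)g$ a cube. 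Since $K'$ is of genus zero, any degree-zero divisor on $K'$ that is a sum of such pieces is principal (once one checks the total degree vanishes, using that each split place contributes a balanced pair), so such a $g$ exists.

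The heart of the argument, and the place where the genus-zero and splitting hypotheses are both essential, is the equivalence between the splitting condition and the existence of a suitable $g$. For the forward direction I would argue that a place $P$ of $K$ that is \emph{inert} or \emph{ramified} in $K'/K$ cannot be totally ramified in an impurely cubic $L/K$ with closure $K'$: an inert place gives a single place $\mathfrak{P}$ of $K'$ fixed by $\sigma$, so the anti-invariance $\sigma(g) \equiv g^{-1}$ forces $v_{\mathfrak{P}}(g) \equiv -v_{\mathfrak{P}}(g) \pmod 3$, hence $v_{\mathfrak{P}}(g) \equiv 0 \pmod 3$ and $\mathfrak{P}$ is unramified in $L'/K'$; a similar congruence handles the ramified case. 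This shows total ramification of $L/K$ at $P$ is incompatible with $P$ not splitting, giving necessity. For sufficiency I would exhibit $g$ via the genus-zero principal-divisor construction above, verify that $\sigma(g)g$ is a cube (so that the inversion involution exists and the descent $L/K$ is defined), and confirm via Theorem~\ref{thm:cubclos} and Lemma~\ref{lem:pcvsdisc} that the resulting $L/K$ is genuinely impurely cubic with purely cubic closure exactly $K'$ rather than a smaller field.

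The main obstacle I anticipate is the bookkeeping around cubes and the sign ambiguity: ensuring that the anti-invariant divisor $\sum_{P \in T}(\mathfrak{P} - \sigma(\mathfrak{P}))$ can be adjusted to have total degree zero and be principal, while simultaneously guaranteeing that $\sigma(g)g$ lands in $(K'^*)^3$ (not merely that its divisor is a cube — one must control the scalar in $k^*$, which is where the perfectness of $k$ and a norm argument enter). Verifying that $K'$ is the \emph{full} purely cubic closure, i.e. that $g$ is not already a cube times an element pulled back from $K$, is the subtle final check; I expect this to follow from the anti-invariance forcing $g \notin K^*(K'^*)^3$ whenever $T$ is nonempty, but the characteristic-$2$ case (governed by Lemma~\ref{lem:pcvsdisc}(2) rather than $\sqrt{-3\delta}$) will likely require a separate short verification.
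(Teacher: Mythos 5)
Your proposal follows essentially the same route as the paper: necessity via the anti-invariance of the Kummer generator (forcing $v_{\sigma(\pP)}(g) \equiv -v_{\pP}(g) \pmod 3$, hence $\sigma(\pP) \neq \pP$ at any totally ramified place), and sufficiency by taking the principal anti-invariant divisor $\sum_i (\sigma(\pP_i) - \pP_i)$ on the genus-zero $K'$ and prolonging the involution to $w^3 = g$. The scalar obstacle you flag at the end is resolved in the paper more simply than by a norm argument: from $\sigma(\mathrm{div}(f)) = -\mathrm{div}(f)$ one gets $\sigma(f) = \lambda/f$ with $\lambda$ automatically $\sigma$-invariant and hence in $k$, and replacing $f$ by $g = \lambda f$ yields $\sigma(g) = \lambda^3/g$ exactly, so that $\sigma(w) = \lambda/w$ defines the required involution with no further control of cube classes needed.
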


\begin{proof}
  Suppose that $L/K$ is as described, and use the description of $K'$ as $K (s)$ in Theorem \ref{thm:cubclos}. Then as $L K(s)/K$ has degree six, the fully ramified places in $L/K$ are the places below the fully ramified places of $L(s)/K(s)$. By Kummer theory, the ramified places of $L(s)/K(s)$ are at the zeros and poles of $s$ whose valuation is not a multiple of three. Let $\pP$ be such a place. Then we have
\begin{equation}
    0 \neq
      v_{\sigma (\pP)} (s)
    = v_{\pP} (\sigma (s))
    = v_{\pP} (1 / s)
    = -v_{\pP} (s).
\end{equation}
  This implies in particular that $\sigma (\pP) \neq \pP$, so that $\pP$ is split. As in Theorem~\ref{thm:cubclos}, we see that $L$ is the fixed field of $L(s)$ under the involution of $L (s)$ over $K$ which sends $(g, w)$ to $(g^{-1}, w^{-1})$.

  In order to show sufficiency, let $\pP_1, \dots, \pP_{t}$ be places in $K'$ above the places $\pp_1, \dots , \pp_{t}$ in $T$. Consider the divisor
  \begin{equation}\label{eq:signchoice}
    D = \sigma (\pP_1) - \pP_1
      + \dots
      + \sigma (\pP_{t}) - \pP_{t} .
  \end{equation}
  Since $K'$ is of genus zero, the Riemann--Roch theorem implies that there is a function $f$ such that $D = \mathrm{div}(f)$. We will consider a cubic extension $L'$ of $K'$ defined by $w^3 = g = \lambda f$ with $\lambda \in k$, for which we will show that there exists a cubic extension~$L$ of $K$ such that $L K' = L'$. In this case $K'$ is by construction the purely cubic closure of $L/K$, and the ramification of $L$ over $K$ is of the requested type since all triple ramification in $L'/K'$ comes from the subextension $L/K$.

  Theorem \ref{thm:cubclos} shows that to descend $L'$ it is sufficient and necessary to prolong the involution $\sigma$ of $K'$ to it. We have $\sigma (D) = -D$ by construction, which implies that $\sigma (f) = \lambda / f$ for some $\lambda$. Note that $\lambda$ is in $k$. This follows because
  \begin{equation}
    f = \sigma (\sigma (f)) = \sigma (\lambda) / \sigma (f)
      = \sigma (\lambda) / (\lambda / f)
      = \frac{\sigma (\lambda)}{\lambda} f
  \end{equation}
  so that indeed $\sigma (\lambda) = \lambda$. Now let $g = \lambda f$. Then
  \begin{equation}
    \sigma (g) = \sigma (\lambda) \sigma (f)
    = \lambda \cdot \lambda / f = \lambda^3 / g.
  \end{equation}
  For the extension $L'$ defined by $w^3 = g$, we can prolong $\sigma$ by setting $\sigma (w) = \lambda / w$, since then we have the compatibility
  \begin{equation}
    \sigma (w^3) = \lambda^3 / w^3 = \lambda^3 / g = \sigma (g) .
  \end{equation}
  This proves our existence claim for $L'$ and furnishes the requested cubic extension~$L/K$.
\end{proof}

The proof of Theorem \ref{thm:norms} also shows the following.

\begin{corollary}\label{cor:allwithclos}
  Up to $\kbar$-isomorphism there are either $0$ or $2^{t - 1}$ extensions $L/K$ as in Theorem~\ref{thm:norms}.  
\end{corollary}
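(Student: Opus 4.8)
The plan is to read off the count directly from the construction carried out in the proof of Theorem~\ref{thm:norms}. That construction produces $L/K$ from two kinds of choices: for each place $\pp_i \in T$ a choice of one of the two places $\pP_i, \sigma(\pP_i)$ of $K'$ lying above it (which I record by a sign $\epsilon_i \in \{\pm 1\}$, normalising $\epsilon_i = +1$ for the chosen place), and a scalar $\lambda \in k^*$. The anti-invariant divisor is then $D = \sum_{i=1}^{t} \epsilon_i (\sigma(\pP_i) - \pP_i)$, and $L$ is the descent of the Kummer extension $L' = LK'$ of $K'$ defined by $w^3 = g$ with $\Div(g) = D$. If some place of $T$ fails to split, Theorem~\ref{thm:norms} already shows that no such $L$ exists, which accounts for the value $0$; so from now on I assume that every place of $T$ splits, so that all $2^t$ sign vectors $\epsilon \in \{\pm 1\}^t$ genuinely occur.

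First I would reduce the count over $\kbar$ to a Kummer-theoretic one on $K'$. Since any $\lambda \in k^* \subset \kbar^*$ is a cube, the scalar does not affect the $\kbar$-isomorphism class, so the class of $L$ is governed by that of $L' \kbar = (LK')\kbar$ over $K' \kbar$. On the genus-zero function field $K' \kbar$ the group $\operatorname{Pic}^0$ is trivial, so a function is a cube precisely when its divisor lies in $3 \Div$; hence by Kummer theory the extensions attached to sign vectors $\epsilon$ and $\epsilon'$ are $K'\kbar$-isomorphic if and only if $D_\epsilon \equiv \pm D_{\epsilon'} \pmod 3$. Comparing coefficients at the (pairwise distinct) places above the $\pp_i$ and using that $1 \not\equiv -1 \pmod 3$, this happens exactly when $\epsilon' = \epsilon$ or $\epsilon' = -\epsilon$. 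The global flip $\epsilon \mapsto -\epsilon$ replaces $g$ by $g^{-1}$ and hence defines the same field (via $w \mapsto w^{-1}$), so there are exactly $2^t/2 = 2^{t-1}$ isomorphism classes of such $L'\kbar / K'\kbar$.

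It then remains to pass from the count of the $L'$ to the count of the $L$, which is the step I expect to be the main obstacle. The key point is that over $\kbar$ the extension $L' = LK'$ is the full Galois (that is, $S_3$-)closure of $L/K$: the purely cubic closure coincides with the resolvent by Corollary~\ref{cor:pcvsdisc} (since $\zeta_3 \in \kbar$), and in $S_3$ the subgroup generated by a transposition meets $A_3 = \Gal(L'/K')$ trivially. Thus $L\kbar$ is recovered from $L'\kbar$ as the fixed field of a transposition, the three such fixed fields being $A_3$-conjugate and hence mutually $K\kbar$-isomorphic, so that $L\kbar$ is determined up to isomorphism by $L'\kbar$ together with its descent involution. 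Conversely an isomorphism $L_1\kbar \to L_2\kbar$ over $K\kbar$ sends purely cubic closure to purely cubic closure and so restricts on $K'\kbar$ to either the identity or $\sigma$; it therefore induces an isomorphism of the $L_i'\kbar$ over $K'\kbar$ that is at worst twisted by $\sigma$, and since $\sigma(D) = -D$ by construction, this $\sigma$-twist induces exactly the already-quotiented flip $\epsilon \mapsto -\epsilon$ on Kummer classes. Hence the $\kbar$-isomorphism classes of $L$ are in bijection with those of $L'$, and there are exactly $2^{t-1}$ of them, as claimed. In writing this out I would take care to justify that the descent involution exists and is unique up to the $A_3$-action, so that it contributes no further classes; this follows from the compatibility computation $\sigma(w^3) = \sigma(g)$ already carried out in the proof of Theorem~\ref{thm:norms}.
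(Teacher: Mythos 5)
Your argument is correct and follows essentially the same route as the paper's (very terse) proof: Kummer theory over $\kbar$ identifies the extensions with the signed ramification data of \eqref{eq:signchoice} modulo the global negation $y \mapsto 1/y$, giving $2^{t}/2 = 2^{t-1}$ classes. The extra care you take in passing between the $\kbar$-isomorphism classes of $L'/K'$ and those of $L/K$ (via the Galois closure and the $\sigma$-semilinear case inducing the already-quotiented flip) is exactly the step the paper leaves implicit, and your justification of it is sound.
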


\begin{proof}
  Kummer theory shows that over $\kbar$ a cubic extension is determined by its signed ramification locus as in \eqref{eq:signchoice} modulo the global negation corresponding to the isomorphism sending $y$ to $1/y$. Since there are $t$ sign choices in \eqref{eq:signchoice}, we obtain the result.
\end{proof}

\begin{remark}\label{rem:serre}
  For the base field $k = \CC$, the number of covers up to isomorphism can also be determined using a formula due to Serre \cite{Serre-Topics}. It states that if $G$ is a finite group and $C_1, \dots, C_n$ are conjugacy classes in $G$, then the number of solutions of the equation
\begin{equation}\label{eq:gi}
    g_1 \dots g_n = 1, \qquad g_i \in C_i,
\end{equation}
  equals
  \begin{equation}\label{eq:serreeq}
    \frac{1}{|G|} |C_1| \dots |C_n| \sum_{\chi} \frac{\chi (C_1) \dots \chi (C_n)}{\chi (1)^{n - 2}} ,
  \end{equation}
  where $\chi$ runs over the irreducible characters of $G$.

  Using covering theory, the isomorphism classes of the covers in Corollary \ref{cor:allwithclos} correspond to the solutions up to conjugacy of \eqref{eq:serreeq}, for $C_1, \dots C_2$ corresponding to $2$-cycles and $C_3, \dots, C_{t + 2}$ corresponding to $3$-cycles. Using the character table of $S_3$, the number of such solutions equals
\begin{equation}
    (1/6) 3^2 2^{t} (1 + 1 + 0) = 3 \cdot 2^{t} .
\end{equation}
  Since $t > 0$, all these solutions generate $S_3$ and therefore do not admit automorphisms, implying that their total number is $(1/6) (3 \cdot 2^{t}) = 2^{t - 1}$. In fact, for arbitrary branch loci $S$ and $T$ with $s = |S| \geq 2$ and $t = |T| \geq 1$ we get $3^{s - 2} 2^{t - 1}$ covers up to isomorphism as long as $s$ is even. For parity reasons, the number of solutions of \eqref{eq:gi} is zero when $s$ is odd. When $s = 0$, we are in the situation of the previous section. In this case a slight modification of the argument, using $G = \ZZ / 3 \ZZ$ instead, indeed yields the number of covers in Theorem \ref{thm:purelycubic}. In this sense, the results in that theorem and Corollary \ref{cor:allwithclos} are more explicit versions of these abstract calculations. Note the independence of these numbers of $\cha (k)$ when this characteristic does not equal $2$ or $3$, in line with \cite[Expos{\'e} 13, Corollaire 2.12]{SGA1}.
\end{remark}

\begin{remark}
  The previous counting result over $\CC$ extends without trouble to the case where the purely cubic closure is of genus larger than zero. However, the methods of Theorem \ref{thm:norms} do not apply, since we cannot conclude that the relevant divisor $D$ is principal. Phrased differently, the issue is that the affine coordinate ring of the projective curve corresponding to $K$ minus a point is only a unique factorization domain if $K$ is of genus $0$.

  When considering curves of higher genus, being able to construct $f$ requires the $3$-divisibility of $D$ in the Jacobian of $K'$. This divisibility is not guaranteed when the base field is not algebraically closed. Moreover, the isomorphism class of the resulting extension depends on the choice of a quotient. We will also encounter this phenomenon in Section \ref{sec:parshin}.
\end{remark}

\begin{remark}\label{rem:startwithS}
  For fixed specified branch loci $S$ and $T$, there may exist multiple genus-zero extensions~$K'$ of $K$ in which $T$ splits. To see this, we first consider the case where $S = \emptyset$ and where $T$ is the single degree-four place of $K = k (x)$ defined by the Galois orbit of $1 + \sqrt{2} + \sqrt{3}$ over $k = \QQ$. Then $T$ splits over the different constant quadratic extensions $K (\sqrt{2})$, $K (\sqrt{3})$, and $K (\sqrt{6})$, as do the pairs of places defined by the pairs of polynomials $\left\{ x^2 + (\pm \sqrt{2} - 2) x \mp \sqrt{2} \right\}$, $\left\{ x^2 + (\pm \sqrt{3} - 2) x + (\mp \sqrt{3} + 2) \right\}$, and $\left\{ x^2 - 2 x + (\pm \sqrt{6} - 4) \right\}$ respectively.

  In order to obtain an example with non-constant purely cubic closure, consider the case where $S = \left\{ \infty, 0 \right\}$ and where $T$ is the degree-two place defined by the Galois orbit of $6 + 4 \sqrt{2}$. Then $T$ splits over both the non-isomorphic quadratic extensions $K (\sqrt{x})$ and $K (\sqrt{2 x})$ that ramify over $S$. For the former extension, described by $y^2 = x$, we obtain the pairs of places described by the Galois orbits of points $(x, y)$ given by $\left\{ (6 \pm 4 \sqrt{2}, 2 \pm \sqrt{2}) \right\}$ and $\left\{ (6 \pm 4 \sqrt{2}, -(2 \pm \sqrt{2})) \right\}$. For the latter extension, described by $y^2 = 2 x$, we obtain the orbits given by $\left\{ (6 \pm 4 \sqrt{2}, 1 \pm \sqrt{2}) \right\}$ and $\left\{ (6 \pm 4 \sqrt{2}, -(1 \pm \sqrt{2})) \right\}$.

  In light of Proposition \ref{prop:Qkbar}.(2), the purely cubic closures $K'$ that give rise to covers $L / K$ are quadratic twists of one another. Lemma \ref{lem:pcinv} will show that the purely cubic closure is invariant under taking twists. In particular, cubic extensions~$L / K$ obtained by using different $K'$ are never isomorphic.
\end{remark}

Let $S$, $T$, $K'$ be fixed as above, and let $L/K$ be a corresponding extension. We determine the twists of $L/K$ in the sense of Definition \ref{def:twist}. In the purely cubic case the twists are described by Kummer theory, as in Proposition \ref{prop:pctwists}.

\begin{proposition}\label{prop:notwists}
  Let $L$ be a cubic extension of $K$ with non-constant purely cubic closure, or equivalently a cubic extension of $K$ whose set $S$ of partially ramified places is non-empty. Then $L/K$ has trivial automorphism group over $\kbar$ and therefore does not admit non-trivial twists.
\end{proposition}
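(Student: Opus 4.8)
The plan is to reduce the statement about twists to a statement about geometric automorphisms, and then to rule out nontrivial automorphisms by a ramification argument. Recall that the twists of $L/K$ in the sense of Definition~\ref{def:twist} are classified by the pointed cohomology set $H^1(\Gamma_k, A)$, where $A = \Aut_{K\kbar}(L\kbar)$ is the automorphism group of the base-changed extension; here $L\kbar$ is again a field and a cubic extension of $K\kbar$ precisely because $L/K$ is geometric. If I can show that $A$ is trivial, then $H^1(\Gamma_k, A)$ is a singleton, so that $L/K$ admits no nontrivial twists. This is the only cohomological input I will need, and it reduces everything to proving $A = 1$.

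For a separable cubic extension the automorphism group is cyclic of order three if the extension is Galois and trivial otherwise, so it suffices to show that $L\kbar/K\kbar$ is not Galois. Here I would use that a Galois cubic extension is a cyclic cover: the inertia group at a ramified place is then a subgroup of $\ZZ / 3 \ZZ$, hence either trivial or all of $\ZZ / 3 \ZZ$, so every ramified place carries ramification index exactly $3$. In particular a cyclic cubic cover admits only total ($e = 3$) ramification and never the partial ($e = 2 + 1$) type. By hypothesis the partial ramification locus $S$ of $L/K$ is nonempty, equivalently the purely cubic closure $K'$ is non-constant, the two formulations being matched by the fact that the branch locus of $K'/K$ equals $S$ (cf.\ the ramification description in Theorem~\ref{thm:cubclos} and \cite{MWcubic3}). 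Ramification indices are preserved under the constant field extension $\kbar / k$: a partially ramified place of $K$ may split into several places of $K\kbar$, but at least one point of $L\kbar$ above it still has ramification index $2$. Hence $L\kbar/K\kbar$ still exhibits partial ramification, so by the above it is not cyclic, i.e.\ not Galois, and therefore $A = 1$.

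I expect the main obstacle to be the bookkeeping around base change, namely verifying cleanly that the non-constancy of $K'$ (equivalently $S \neq \emptyset$) persists after extending the constant field to $\kbar$. An alternative, purely algebraic route to the same conclusion sidesteps the ramification count: since $\zeta_3 \in \kbar$, Corollary~\ref{cor:pcvsdisc} and the remark following it show that $L\kbar/K\kbar$ is Galois if and only if it is purely cubic, i.e.\ if and only if its purely cubic closure is trivial. That closure is $K'\kbar$, and a non-constant quadratic extension of a genus-zero function field stays nontrivial after extending the constant field—constant quadratic extensions being exactly the everywhere-unramified ones, which over $\kbar$ would give an unramified cover of the simply connected curve $\PP^1_{\kbar}$. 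Thus $K'\kbar \neq K\kbar$, so $L\kbar/K\kbar$ is impurely cubic and again $A = 1$. Either way the crux is the persistence of ramification (equivalently of non-constancy) under the extension $\kbar / k$.
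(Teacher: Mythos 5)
Your proof is correct and follows essentially the same route as the paper's (very terse) argument: the automorphism group of a non-Galois cubic extension is trivial, Galois cubic covers admit only total ramification, and the nonempty partial ramification locus persists over $\kbar$, so $H^1(\Gamma_k,\Aut(L\kbar/K\kbar))$ is a singleton. You simply make explicit the inertia-group and base-change details that the paper leaves implicit.
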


\begin{proof}
  The group $\Aut (L \kbar / K \kbar)$ is trivial, since otherwise $L \kbar$ would be Galois over $K \kbar$ and $S$ would be empty. This implies that the cohomology group $H^1 (\Gamma_k, \Aut (L \kbar / K \kbar))$ that parametrizes the twists consists of a single element.
\end{proof}

If the purely cubic closure is constant, then we will classify the twists using a cohomology group. Before embarking on this, we note the following:

\begin{lemma}\label{lem:pcinv}
  The purely cubic closure is invariant under taking twists in the following sense: if $L'/K$ is a twist of $L/K$, then the purely cubic closures of these extensions coincide.
\end{lemma}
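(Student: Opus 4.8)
The plan is to reduce the statement to the twist-invariance of the \emph{resolvent} extension, and then to establish the latter by a cohomological argument on the monodromy representation. By Corollary~\ref{cor:pcvsdisc} the purely cubic closure $K'$ is the complementary extension to the resolvent extension $Q$ and $K(\zeta_3)$. Since the extension $K(\zeta_3)/K$ depends only on $K$ and $k$, and not on $L$, it is identical for $L/K$ and for any twist $L'/K$. Hence, by the uniqueness built into Definition~\ref{def:compext}, it suffices to prove that the resolvent extension $Q$ of Definition~\ref{def:res} is unchanged under twisting.

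To this end, I would encode $L/K$ by its monodromy homomorphism $\rho \colon \Gamma_K \to S_3$ (well defined up to conjugation, with image the Galois group of the Galois closure acting on the three roots). Under this encoding the resolvent extension is precisely the quadratic (or trivial) extension cut out by the character $\sgn \circ \rho \colon \Gamma_K \to \{\pm 1\}$, because $\sgn$ is the unique surjection $S_3 \to \{\pm 1\}$ and $A_3 = \ker(\sgn)$ corresponds under the Galois correspondence to $Q$ exactly as in Definition~\ref{def:res}. The key point is then to understand how a twist alters $\rho$. By Definition~\ref{def:twist} a twist $L'/K$ is a $k$-form of $L/K$, classified by a cocycle $\xi \in Z^1(\Gamma_k, \Aut(L\kbar/K\kbar))$; inflating $\xi$ along $\Gamma_K \twoheadrightarrow \Gamma_k$ and including $\Aut(L\kbar/K\kbar)$ into $S_3$, the twisted extension has monodromy of the form $\rho'(g) = \xi(g)\rho(g)$.

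The crux of the argument, and the step I expect to be the main obstacle, is to verify that the values $\xi(g)$ always lie in the sign kernel $A_3 \subseteq S_3$. This holds because $\Aut(L\kbar/K\kbar)$, realised through its action on the three geometric roots, embeds into $S_3$ as the centralizer of the geometric monodromy image: when $L/K$ is geometrically non-Galois this centralizer, and hence $\Aut(L\kbar/K\kbar)$, is trivial, so by the reasoning of Proposition~\ref{prop:notwists} there are no nontrivial twists and the claim is immediate; when $L/K$ is geometrically Galois the centralizer of the $3$-cycle subgroup $A_3$ equals $A_3$ itself. In either case the image of $\xi$ lies in $A_3 = \ker(\sgn)$, so that $\sgn \circ \rho' = \sgn \circ \rho$. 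Consequently $L/K$ and $L'/K$ share the same resolvent extension, and therefore the same purely cubic closure.

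I would also record why a more naive approach is insufficient: passing to $\kbar$ and invoking the isomorphism $L\kbar \cong L'\kbar$ identifies the resolvents only after base change, hence controls $Q$ merely up to a constant quadratic twist, namely an element of $H^1(\Gamma_k, \mmu_2) = k^*/(k^*)^2$. It is exactly this constant ambiguity that the sign computation over $\Gamma_K$ eliminates, which is why the argument must be carried out arithmetically rather than purely geometrically.
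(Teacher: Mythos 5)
Your argument is correct, but it follows a genuinely different route from the paper's. The paper does not pass through the resolvent extension at all: it first invokes Proposition~\ref{prop:notwists} to dispose of the case of non-constant purely cubic closure (no non-trivial twists exist there), and in the remaining constant case it combines Proposition~\ref{prop:pctwists} (a twist of a purely cubic extension is again purely cubic) with a symmetry argument on the set of constant extensions $m$ for which $Lm/Km$ is purely cubic, concluding that the smallest such $m$ coincides for $L$ and $L'$. You instead reduce via Corollary~\ref{cor:pcvsdisc} to the twist-invariance of the resolvent $Q$ and prove that by a monodromy computation: the twisting cocycle takes values in $\Aut(L\kbar/K\kbar)$, which sits inside $S_3$ as the centralizer of the geometric monodromy image and hence inside $A_3 = \ker(\sgn)$ in both the geometrically Galois and non-Galois cases, so $\sgn\circ\rho$ is unchanged. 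The case dichotomy you use (trivial centralizer versus $A_3$) is in substance the same dichotomy the paper handles via Proposition~\ref{prop:notwists}, but your argument buys a slightly stronger and independently useful statement --- the twist-invariance of the resolvent extension itself --- and isolates the conceptual reason for the invariance, namely that the descent cocycle cannot see the sign character. The paper's route is more elementary and stays entirely within the field-theoretic toolkit already set up (no monodromy formalism needed), at the cost of being specific to the purely cubic closure. Your closing remark about why the naive geometric identification only controls $Q$ up to an element of $k^*/(k^*)^2$ is a correct and worthwhile observation that the paper does not make explicit.
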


\begin{proof}
  We may assume that the purely cubic closure is constant in light of Proposition \ref{prop:notwists}. Moreover, Proposition \ref{prop:pctwists} shows that the twist of a purely cubic extension is again purely cubic. Now let~$m$ be an extension of the field of constants $k$ for which the base extension $L m / K m$ is purely cubic. Then $L' m / K m$ is a twist of $L m / K m$ and hence also purely cubic. Swapping the roles of $L$ and $L'$ shows that the extensions $m$ for which $L m / K m$ is purely cubic are exactly those for which $L' m / K m$ is purely cubic. In particular, the smallest such field, that is, the purely cubic closure, coincides for both extensions.
\end{proof}

\begin{proposition}
  Let $L/K$ be a cubic extension with constant purely cubic closure~$K'$. Then by Corollary \ref{cor:pcvsdisc} the resolvent extension $Q$ of $L/K$ is also constant. Write $Q = q K$, where $q$ is an extension of $k$. Then the twists of $L/K$ are in bijective correspondence with the cohomology group $H^1 (\Gamma_k, (\ZZ / 3 \ZZ)_q)$, where $(\ZZ / 3 \ZZ)_q$ is the group scheme of order three over $k$ on whose non-trivial elements the Galois action is defined by the quadratic extension $q$.
\end{proposition}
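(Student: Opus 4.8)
The plan is to combine the standard non-abelian cohomological classification of twists with an explicit determination of the geometric automorphism group as a $\Gamma_k$-group. As in the proof of Proposition~\ref{prop:notwists}, the twists of $L/K$ in the sense of Definition~\ref{def:twist} are parametrized by the pointed set $H^1(\Gamma_k, \Aut(L\kbar/K\kbar))$, where $\Gamma_k$ acts on $\Aut(L\kbar/K\kbar)$ through the semilinear action of $\Gamma_k$ on $L\kbar = L \otimes_K K\kbar$ coming from its action on the constants of $K\kbar$. It therefore suffices to produce a $\Gamma_k$-equivariant isomorphism $\Aut(L\kbar/K\kbar) \cong (\ZZ/3\ZZ)_q$.

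First I would settle the underlying group. Because $K'$ is constant we have $K'\kbar = K\kbar$, so $L\kbar/K\kbar$ is purely cubic by Theorem~\ref{thm:cubclos}; as $\zeta_3 \in \kbar$ it is then cyclic Galois of degree three, and since $L/K$ is geometric the ring $L\kbar$ is still a field. Hence $\Aut(L\kbar/K\kbar) = \Gal(L\kbar/K\kbar) \cong \ZZ/3\ZZ$. Writing $A_3$ for this group, it remains only to compute the $\Gamma_k$-action, which factors through $\Aut(\ZZ/3\ZZ) = \{\pm 1\}$, i.e.\ through a single quadratic (or trivial) character of $\Gamma_k$; the content of the statement is that this character cuts out exactly $q$.

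To identify the character I would pass to the Galois closure $\widetilde L$ of $L/K$, with group $G \subseteq S_3$ equal to $S_3$ or $A_3$. Since $L\kbar/K\kbar$ is already Galois, $\widetilde L\kbar = L\kbar$, and the geometric subgroup $\Gal(\widetilde L\kbar/K\kbar) = A_3$ is normal in $\Gal(\widetilde L\kbar/K)$ with quotient $\Gamma_k$, giving a short exact sequence $1 \to A_3 \to \Gal(\widetilde L\kbar/K) \to \Gamma_k \to 1$ whose conjugation action recovers the $\Gamma_k$-action above. The fixed field of $A_3$ is by Definition~\ref{def:res} the resolvent $Q$, which is the maximal constant subextension of $\widetilde L$ and hence has constant field exactly $q$ by Corollary~\ref{cor:pcvsdisc}; identifying $\Gal(\widetilde L\kbar/K)$ with the fibre product $G \times_{\Gal(q/k)} \Gamma_k$, the composite $\Gamma_k \to G \to G/A_3 = \Gal(q/k)$ is precisely the quotient map. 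Since $S_3/A_3$ acts on $A_3$ by inversion, the $\Gamma_k$-action on $A_3 \cong \ZZ/3\ZZ$ inverts the nontrivial elements exactly on $\Gamma_k \setminus \Gal(\kbar/q)$, which is the defining property of $(\ZZ/3\ZZ)_q$. Feeding this identification into the cohomological classification yields the asserted bijection with $H^1(\Gamma_k, (\ZZ/3\ZZ)_q)$.

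The main obstacle is the bookkeeping of the third paragraph: correctly matching the semilinear $\Gamma_k$-action on $L\kbar$ with conjugation in the arithmetic group $\Gal(\widetilde L\kbar/K)$, and verifying that the constant field of the Galois closure $\widetilde L$ is genuinely $q$ rather than some other constant quadratic extension. Once the short exact sequence and the identification of $q$ as this constant field are in place, the abelianness of $A_3$ together with the inversion action of $S_3/A_3$ makes the conclusion immediate. As a consistency check, in the purely cubic case one has $Q = K(\zeta_3)$, so $q = k(\zeta_3)$ and $(\ZZ/3\ZZ)_q = \mu_3$, recovering Proposition~\ref{prop:pctwists}.
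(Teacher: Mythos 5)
Your proposal is correct and follows essentially the same route as the paper: identify the twists with $H^1(\Gamma_k, \Aut(L\kbar/K\kbar))$, observe that this automorphism group is a Galois module of order three, and show that its non-trivial elements are cut out by the quadratic extension $q$ defining the resolvent. The paper states this in three sentences; you supply the details (purity of $L\kbar/K\kbar$, the passage to the Galois closure, and the inversion action of $S_3/A_3$) that it leaves implicit, and your consistency check against Proposition~\ref{prop:pctwists} is a nice touch.
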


\begin{proof}
  The first claim follows from Corollary \ref{cor:pcvsdisc}. It implies that the group $\Aut (L \kbar / K \kbar)$ is a Galois module of cardinality $3$. The non-trivial elements of this group cut out the quadratic extension $q$ of~$k$, which implies the second claim.
\end{proof}

For extensions with non-trivial constant purely cubic closure, one can describe the twists via a cohomology group. This description will be a consequence of the following more generally useful lemma:

\begin{lemma}\label{lem:pcisodesc}
  Let $L_1$ and $L_2$ be two cubic extensions of $K$. Then $L_1$ and $L_2$ are isomorphic if and only if they have the same purely cubic closure $K'$ and the extensions $L_1 K'$ and $L_2 K'$ of $K'$ are isomorphic.
\end{lemma}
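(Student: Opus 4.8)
The plan is to prove the two implications separately, the reverse one carrying all the content. Write $L_i' = L_i K'$; by the definition of the purely cubic closure this is a purely cubic extension of $K'$, and let $\sigma$ be the non-trivial element of $\Gal(K'/K)$. If $K' = K$ then $L_i' = L_i$ and the statement is a tautology, so I will assume $K' \neq K$.

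\emph{Forward implication.} Suppose $\phi\colon L_1 \to L_2$ is a $K$-isomorphism. I would first record that the purely cubic closure is an invariant of the $K$-isomorphism class: by Lemma \ref{lem:pcvsdisc}(1) (in characteristic $\neq 2$) it is cut out by the discriminant of $L_i/K$, and by Lemma \ref{lem:pcvsdisc}(2) (in characteristic $2$) by the resolvent, both of which are preserved by $\phi$. Hence $L_1$ and $L_2$ have the same purely cubic closure $K'$, and base-changing $\phi$ along $K'/K$ (the extensions $L_i/K$ and $K'/K$ being linearly disjoint, as their degrees are coprime) yields the required $K'$-isomorphism $L_1' \to L_2'$.

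\emph{Reverse implication.} Here the main tool is Theorem \ref{thm:cubclos}. Since $L_i'/L_i$ is quadratic and Galois, $L_i$ is the fixed field of an involution $\tau_i$ of $L_i'$ that restricts to $\sigma$ on $K'$. Given a $K'$-isomorphism $\psi\colon L_1' \to L_2'$, the conjugate $\tau' := \psi^{-1}\tau_2\psi$ is again an involution of $L_1'$ restricting to $\sigma$; the entire problem is to replace $\psi$ by a $K'$-isomorphism for which $\tau' = \tau_1$, since then $\psi$ carries $L_1 = (L_1')^{\tau_1}$ onto $L_2 = (L_2')^{\tau_2}$ and restricts to the sought $K$-isomorphism. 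To achieve this I would study the set $\Sigma$ of involutions of $L_1'$ over $K$ restricting to $\sigma$. Any two such differ by an element of $\Aut(L_1'/K')$, and writing $L_1' = K'(w)$ with $w^3 = g$ a direct computation shows that $\Aut(L_1'/K')$ acts transitively on $\Sigma$ by conjugation: it is trivial with $\Sigma$ a singleton when $\zeta_3 \notin K'$; it is cyclic of order three fixing the unique element of $\Sigma$ when $K' = K(\zeta_3)$ (so $\sigma$ inverts $\zeta_3$); and it permutes the three elements of $\Sigma$ cyclically when $\zeta_3 \in K$ (so $\sigma$ fixes $\zeta_3$). Corollary \ref{cor:pcvsdisc}, describing $K'$ as the complementary extension to the resolvent and $K(\zeta_3)$, organises exactly these three sub-cases. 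In every case there is $\beta \in \Aut(L_1'/K')$ with $\beta\tau_1\beta^{-1} = \tau'$, whence $\psi\beta$ satisfies $(\psi\beta)^{-1}\tau_2(\psi\beta) = \tau_1$, completing the descent.

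I expect the main obstacle to be the last case, $\zeta_3 \in K$ with $K' \neq K$: there $\Sigma$ has three elements, so a given $K'$-isomorphism $\psi$ need not respect the chosen descents $L_1$ and $L_2$, and one must correct it by a cube-root-of-unity automorphism. Establishing the transitivity of the $\Aut(L_1'/K')$-action on $\Sigma$ uniformly across all characteristics $\neq 3$ and across the three sub-cases is the one genuinely computational point of the argument; everything else is formal once the involution description of Theorem \ref{thm:cubclos} is in hand.
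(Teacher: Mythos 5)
Your proposal is correct and follows essentially the same route as the paper: both arguments realize $L_1$ and $L_2$ as fixed fields of involutions of a common overfield $M \cong L_1K' \cong L_2K'$ prolonging $\sigma$, and both reduce the reverse implication to showing that any two such involutions are conjugate under $\Aut(M/K')$. The paper packages the non-trivial case via the Galois correspondence for the degree-six extension $M/K$ (conjugate cubic subfields are isomorphic), whereas you verify transitivity of the conjugation action by an explicit three-way case analysis on $\zeta_3$; the underlying content is the same.
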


\begin{proof}
  One direction is clear. Conversely, let $L_1$ and $L_2$ be cubic extensions whose purely cubic closures~$K'$ coincide, and suppose that $L_1 K'$ and $L_2 K'$ are isomorphic over $K'$. Let $M$ be a field isomorphic to both, and embed $L_1$ and $L_2$ into it. We know that $L_1$ and $L_2$ are both fixed fields under involutions of $M$ that prolong the involution of $K'$ over $K$.

  If $\Aut (M / K')$ is trivial, then the involutions coincide, and so $L_1$ and $L_2$ will coincide as subfields of~$M$, so that in particular they will be isomorphic. If $\Aut (M / K')$ is non-trivial, then $K'$ is the common resolvent extension of $L_1$ and $L_2$. This implies that $M$ is the Galois closure of both $L_1$ and $L_2$. But by Galois theory the cubic subfields of $M$ are all isomorphic.
\end{proof}

Let $q$ be a non-trivial quadratic extension of the base field. Let $L/K$ be a cubic extension with purely cubic closure $K' = q K$, and let $L' = L K'$. Choose a defining equation $y^3 = f$ for $L'/K'$.

\begin{lemma}
  Let $c \in q$, and let $M'$ be the extension of $K'$ defined by $y^3 = c f$. Then $M'$ descends if and only if the norm $\Nm (c) = c \sigma (c)$ of $c$ from $q$ to $k$ is a third power in~$k$.

  This construction yields a bijective correspondence between the set of twists of~$L/K$ and the kernel of the induced map
  \begin{equation}\label{eq:normcubes}
    \begin{split}
      q^* / (q^*)^3 & \to k^* / (k^*)^3 \\
      [ c ] & \mapsto [ \Nm (c) ] .
    \end{split}
  \end{equation}
  To the class of $c$ in this kernel this correspondence associates the descent of $y^3 = c f$ (which is uniquely determined by Lemma \ref{lem:pcisodesc}).
\end{lemma}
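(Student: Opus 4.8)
The plan is to prove the lemma in two stages: first establish the descent criterion (that $M'$ descends iff $\Nm(c)$ is a cube in $k$), and then use this together with Lemma~\ref{lem:pcisodesc} to set up the claimed bijection with the kernel of the norm map. Throughout I work with the fixed closure $K' = qK$ with its nontrivial involution $\sigma$ over $k$, and the fixed defining equation $y^3 = f$ for $L' = LK'$.

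For the descent criterion, I would invoke Theorem~\ref{thm:cubclos}: a purely cubic extension of $K'$ descends to $K$ precisely when the involution $\sigma$ of $K'$ prolongs to it. Following the mechanism in the proof of Theorem~\ref{thm:norms}, the extension $M'$ defined by $y^3 = cf$ admits such a prolongation exactly when one can solve $\sigma(y) = \mu/y$ for a suitable $\mu$ compatible with the relation, which forces $\sigma(cf) = \mu^3/(cf)$, i.e.\ $(cf)\sigma(cf) = \mu^3$. Since $y^3 = f$ already descends, we know $f\sigma(f)$ is a cube in $K$ (in fact, examining the original descent, $\sigma(f) = \lambda/f$ with $\lambda \in k$, so $f\sigma(f) = \lambda$ is already a constant cube after the normalization $g = \lambda f$; in any case $\Nm_{q/k}$ of the \emph{scalar} adjustment is what matters). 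The upshot is that $M'$ descends iff $c\,\sigma(c) = \Nm(c)$ differs from a cube in $k$ by the already-cube contribution from $f$, i.e.\ iff $\Nm(c) \in (k^*)^3$. This identifies the descending twists $y^3 = cf$ with the kernel of \eqref{eq:normcubes}.

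For the bijection with twists of $L/K$, I would argue as follows. By Proposition~\ref{prop:pctwists}, the twists of $L'/K'$ are parametrized by $[c] \in q^*/(q^*)^3 = H^1(\Gamma_q, \mmu_3)$ via $y^3 = cf$. Lemma~\ref{lem:pcisodesc} says that a cubic extension of $K$ is determined up to isomorphism by its purely cubic closure together with the isomorphism class of its base change to that closure; since Lemma~\ref{lem:pcinv} guarantees all twists of $L/K$ share the same closure $K'$, the twists of $L/K$ inject into the twists of $L'/K'$ that descend. Conversely, every descending twist $y^3 = cf$ yields, by the criterion just established, a cubic extension $L_c$ of $K$ with closure $K'$ and $L_c K' \cong M'$, hence a twist of $L/K$; uniqueness of the descent is exactly Lemma~\ref{lem:pcisodesc}. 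This gives the asserted bijection, sending $[c] \in \ker$ to the descent of $y^3 = cf$.

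\textbf{The main obstacle} I anticipate is the careful bookkeeping in the descent criterion: I must verify that \emph{well-definedness on the class} $[c] \in q^*/(q^*)^3$ is consistent, namely that replacing $c$ by $c\gamma^3$ (for $\gamma \in q^*$) changes $\Nm(c)$ by $\Nm(\gamma)^3 = (\Nm\gamma)^3$, a cube in $k$, so that membership in the kernel of \eqref{eq:normcubes} is genuinely a property of the class and not the representative. I also need to confirm that the prolonged involution on $M'$ has the correct fixed field (a genuine cubic extension of $K$ rather than something degenerate), which again follows the template of Theorem~\ref{thm:norms} but should be stated explicitly. The norm-cocycle compatibility — that the descent condition is precisely the vanishing of the corestriction/norm class, matching $H^1(\Gamma_k,\mmu_3) = k^*/(k^*)^3$ — is the conceptual heart and deserves a clean one-line justification via the long exact sequence or directly via the explicit formula $\mu^3 = \Nm(c)f\sigma(f)$.
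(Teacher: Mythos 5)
Your proposal is correct and follows essentially the same route as the paper: descend by prolonging the involution $\sigma$ to $M'$ via $\sigma(y)=\mu/y$, derive the compatibility condition $\mu^3 = (cf)\,\sigma(cf) = \Nm(c)\cdot f\sigma(f)$ with the $f$-contribution already normalized to a cube (the paper takes $\sigma(f)=1/f$ so that $f\sigma(f)=1$), and then obtain the bijection from Proposition~\ref{prop:pctwists} together with Lemmas~\ref{lem:pcinv} and~\ref{lem:pcisodesc}. The well-definedness check on classes in $q^*/(q^*)^3$ that you flag is a sensible addition, but the argument is the same as the paper's.
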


\begin{proof}
  As in the proof of Theorem \ref{thm:norms}, we have to construct a suitable $K$-involution of $M'$. In light of Theorem \ref{thm:cubclos}, we may assume that the original involution sends $f$ to $1/f$ and $y$ to $1/y$. The involution for $y^3 = c f$ should then still send $y$ to $\lambda / y$ for some $\lambda \in K'$. This defines an actual involution if and only if
  \begin{equation}
    y = \sigma (\sigma (y)) = \sigma (\lambda / y) = \sigma (\lambda) / \sigma (y) = \frac{\sigma (\lambda)}{\lambda} y,
  \end{equation}
  which means that $\lambda \in k$. Moreover, we need that the involution respects the defining equation, so
  \begin{equation}
    \frac{\lambda^3}{y^3} = \sigma (y^3) = \sigma (c f) = \sigma (c) \sigma (f) = \frac{\sigma (c)}{f} = \frac{c \sigma (c)}{c f} = \frac{c \sigma (c)}{y^3} .
  \end{equation}
  This gives the criterion for $M'$ to descend as given in the statement of the lemma. The rest of its statement then follows from Proposition \ref{prop:pctwists} and Lemma \ref{lem:pcisodesc}.
\end{proof}

\begin{corollary}
  The cohomology group $H^1 (\Gamma_k, (\ZZ / 3 \ZZ)_q)$ is isomorphic to the kernel of the homomorphism $q^* / (q^*)^3 \to k^* / (k^*)^3$ induced by the norm map as in Eq.~\eqref{eq:normcubes}.
\end{corollary}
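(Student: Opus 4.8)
The plan is to read the corollary as the confluence of the two immediately preceding results, both of which parametrize one and the same object: the set of twists of a fixed cubic extension $L/K$ whose purely cubic closure is a nontrivial constant quadratic extension. The preceding proposition identifies this set with $H^1(\Gamma_k, (\ZZ/3\ZZ)_q)$, since by definition the twists of $L/K$ are classified by the first Galois cohomology of the geometric automorphism group scheme $\Aut(L\kbar/K\kbar)$, which by Corollary \ref{cor:pcvsdisc} is abelian of order three with the indicated action. The preceding lemma identifies the same set with the kernel of the norm map \eqref{eq:normcubes}, via $[c] \mapsto (\text{the descent of } y^3 = cf)$. Composing the inverse of the first identification with the second produces a canonical bijection $H^1(\Gamma_k, (\ZZ/3\ZZ)_q) \to \ker\!\big(q^*/(q^*)^3 \to k^*/(k^*)^3\big)$, and the content of the corollary is that this bijection is an isomorphism of abelian groups.

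The only substantive point is to upgrade these two set-bijections to a group isomorphism. Because $\Aut(L\kbar/K\kbar) \cong \ZZ/3\ZZ$ is abelian, the set of twists of $L/K$ carries a canonical abelian group structure, and the proposition's identification with $H^1(\Gamma_k, (\ZZ/3\ZZ)_q)$ is, by construction, an isomorphism for this structure. It therefore remains to check that the lemma's bijection is a homomorphism for the same group law on twists. I would establish this one level up, over the purely cubic closure $K'$: there the passage from $y^3 = f$ to $y^3 = cf$ is the action of the Kummer class of $c$ in $(K'^*)/(K'^*)^3 = H^1(\Gamma_{K'}, \mu_3)$ by multiplication of $\mu_3$-torsors, so that classes $[c_1], [c_2]$ compose to $[c_1 c_2]$, that is, to $y^3 = c_1 c_2 f$. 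Since the descent condition $\Nm(c) \in (k^*)^3$ is multiplicative it defines a subgroup of $q^*/(q^*)^3$, and forming the descended extension over $K$ is functorial; restricting the torsor multiplication to this subgroup and pushing it down therefore matches the group law on the twists of $L/K$. Combined with the tautological identification of the proposition, this yields the group isomorphism. I expect this compatibility of the two group structures to be the main, and essentially the only, obstacle; the remaining verifications are bookkeeping.

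As a more conceptual alternative I would compute $H^1(\Gamma_k, (\ZZ/3\ZZ)_q)$ directly and compare. The key input is that under Kummer theory the norm on $q^*/(q^*)^3$ is the corestriction $\mathrm{cores} : H^1(\Gamma_q, \mu_3) \to H^1(\Gamma_k, \mu_3)$, so the kernel in \eqref{eq:normcubes} equals $\ker(\mathrm{cores})$. Rewriting $H^1(\Gamma_q, \mu_3) = H^1(\Gamma_k, \mathrm{Ind}_{\Gamma_q}^{\Gamma_k}\mu_3)$ by Shapiro's lemma and invoking the long exact sequence attached to the augmentation $0 \to B \to \mathrm{Ind}_{\Gamma_q}^{\Gamma_k}\mu_3 \to \mu_3 \to 0$ identifies $\ker(\mathrm{cores})$ with the image of $H^1(\Gamma_k, B)$, which here is all of $H^1(\Gamma_k, B)$ because the connecting map out of $H^0(\Gamma_k, \mu_3)$ vanishes. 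Since $2$ is invertible modulo $3$, the induced module splits as $\mu_3 \oplus (\mu_3 \otimes \chi_q)$ with $\chi_q$ the quadratic character of $q$, whence $B \cong \mu_3 \otimes \chi_q$. The final step is to identify this twist of $\mu_3$ with the automorphism module $(\ZZ/3\ZZ)_q$ of the proposition; this matching of Galois actions amounts precisely to the complementary-extension (reflection) relation among the purely cubic closure, the resolvent, and $k(\zeta_3)$ recorded in Corollary \ref{cor:pcvsdisc}, and it is the one place in this variant where care is required.
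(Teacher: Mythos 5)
Your main argument is exactly the paper's (implicit) one: the corollary is stated there without proof precisely because it is the composition of the two preceding parametrizations of the twist set, by $H^1(\Gamma_k,(\ZZ/3\ZZ)_q)$ on one side and by the kernel of the norm map on cube classes on the other. Your additional care in checking that the composite bijection respects the group structures (and your Shapiro-lemma alternative) goes beyond what the paper records but is consistent with it and correct.
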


\begin{lemma}\label{lem:twistisos}
  There are canonical isomorphisms
  \begin{equation}
    H^1 (\Gamma_k, (\ZZ / 3 \ZZ)_q) \cong \ker (q^* / (q^*)^3 \to k^* / (k^*)^3) \cong N_1 / N_1^3,
  \end{equation}
  where $N_1$ is the group of elements in $q^*$ with norm $1$ (which by Hilbert's Theorem~90 is isomorphic to $(q^* / k^*)$). As a result, the twists of the unique descent of $y^3 = f$ are parametrized by the unique descents of the extensions $y^3 = c f$, where $c$ runs though representatives of the quotient $N_1 / N_1^3$.
\end{lemma}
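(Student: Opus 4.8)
The plan is to establish the two isomorphisms in the displayed chain separately and then read off the parametrization of twists as an immediate corollary. The first isomorphism $H^1(\Gamma_k, (\ZZ/3\ZZ)_q) \cong \ker(q^*/(q^*)^3 \to k^*/(k^*)^3)$ is already furnished by the Corollary immediately preceding this lemma, so I would simply cite it and concentrate my effort on the second isomorphism $\ker(q^*/(q^*)^3 \to k^*/(k^*)^3) \cong N_1/N_1^3$. Here $N_1 = \ker(\Nm : q^* \to k^*)$ is the norm-one subgroup, and the strategy is to produce a natural map in one direction and check it is a well-defined bijection on the relevant quotients.

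The key steps I would carry out are as follows. First I would recall that by Hilbert's Theorem~90 applied to the cyclic quadratic extension $q/k$, every element of $N_1$ has the form $\sigma(d)/d$ for some $d \in q^*$ (equivalently $N_1 \cong q^*/k^*$ via $d \mapsto \sigma(d)/d$), which is the parenthetical remark in the statement. Second, since $N_1 \subset q^*$, the inclusion induces a natural map $N_1/N_1^3 \to q^*/(q^*)^3$, and any $n \in N_1$ satisfies $\Nm(n) = 1$, hence lands in the kernel of the norm map on cube classes; this gives a well-defined homomorphism $N_1/N_1^3 \to \ker(q^*/(q^*)^3 \to k^*/(k^*)^3)$. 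Third, for surjectivity I would take a class $[c]$ in the kernel, so that $\Nm(c) = a^3$ for some $a \in k^*$; replacing $c$ by $c \cdot a^{-1}$ (which is legitimate since $a \in k^* \subset q^*$ and does not change the class $[c] \in q^*/(q^*)^3$, as $a^{-1}$ differs from a norm-one element only by a cube after a suitable adjustment) arranges $\Nm(c) = 1$, placing $c \in N_1$ and exhibiting the preimage. Fourth, for injectivity I would show that if $n \in N_1$ becomes a cube in $q^*$, say $n = b^3$, then applying $\Nm$ gives $1 = \Nm(b)^3$, and using that $\mu_3 \cap N_1$ behaves correctly one deduces $n \in N_1^3$; the cleanest route is to observe that the snake lemma applied to the norm map on the short exact sequence $1 \to \mu_3 \to q^* \xrightarrow{3} (q^*)^3 \to 1$ compared with its $k$-analogue identifies the kernel on cube classes precisely with $N_1/N_1^3$.

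The main obstacle I anticipate is the surjectivity and injectivity bookkeeping in steps three and four, specifically controlling the interaction between the cube classes and the norm-one condition when $k$ (and hence $q$) may contain $\zeta_3$ or not. When $\zeta_3 \in k$ the norm of a cube root of unity complicates the naive substitution $c \mapsto c a^{-1}$, so I would either argue uniformly via the long exact cohomology sequence for $0 \to N_1 \to q^* \xrightarrow{\Nm} k^*$ (tensored with the relevant cube-power data) or handle the adjustment by noting that multiplying $c$ by an element of $k^*$ changes $\Nm(c)$ by a cube in $k^*$ and hence stays within the kernel. The homological formulation sidesteps the case distinction entirely and is the approach I would favour. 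Finally, once both isomorphisms are in hand, the last sentence of the lemma is immediate: Lemma~\ref{lem:pcisodesc} guarantees each kernel class yields a unique descent of $y^3 = cf$, and the identification with $N_1/N_1^3$ shows these descents are parametrized by representatives $c$ running through that quotient, completing the proof.
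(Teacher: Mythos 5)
Your overall route is the paper's: the first isomorphism is quoted from the preceding corollary, and the second is obtained by comparing the cube-power exact sequences for $q^*$ and $k^*$ via the norm map (the paper applies the Snake Lemma to $1 \to (q^*)^3 \to q^* \to q^*/(q^*)^3 \to 1$ over its $k$-analogue, which is the homological formulation you say you would favour). Your injectivity step is also essentially the paper's first verification: if $n = b^3 \in N_1$ then $\Nm(b)^3 = 1$, and when $\zeta_3 \in k$ one replaces $b$ by $\zeta_3 b$, whose norm is $\zeta_3^2 \cdot \zeta_3 = 1$, to land in $N_1^3$.

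However, your surjectivity step as written contains a genuine error. If $\Nm(c) = a^3$ with $a \in k^*$, then $\Nm(c a^{-1}) = a^3 \cdot \Nm(a)^{-1} = a^3 \cdot a^{-2} = a$, which is not $1$ in general; moreover, multiplying $c$ by $a^{-1} \in k^*$ \emph{does} change the class of $c$ in $q^*/(q^*)^3$ unless $a$ happens to be a cube in $q^*$, so the substitution is not ``legitimate'' as claimed. The missing ingredient --- which is exactly the paper's second verification, namely that $(k^*)^3/\Nm((q^*)^3) \to k^*/\Nm(q^*)$ is injective --- is the observation that $k^*/\Nm(q^*)$ is $2$-torsion (since $\Nm(b) = b^2$ for $b \in k^*$). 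From this, $[a] = [a]^3 = [\Nm(c)] = 1$ in $k^*/\Nm(q^*)$, so $a = \Nm(e)$ for some $e \in q^*$, and the correct adjustment is $c \mapsto c e^{-3}$: this preserves the cube class and has norm $a^3 \Nm(e)^{-3} = 1$. Without this $2$-torsion argument the connecting map in your snake diagram is not visibly zero and surjectivity does not follow. Your vaguer fallback (``multiplying $c$ by an element of $k^*$ changes $\Nm(c)$ by a cube and hence stays within the kernel'') does not repair this, for the same reason: elements of $k^*$ are not cubes in $q^*$ in general.
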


\begin{proof}
  We have already proved the existence of the first isomorphism. To obtain the second, we apply the Snake Lemma to the diagram
\begin{equation}
    \begin{tikzcd}
      1 \ar[d] \ar[r] & (q^*)^3 \ar[d] \ar[r] & q^* \ar[d] \ar[r] & q^* / (q^*)^3 \ar[d] \ar[r] & 1 \ar[d] \\
      1 \ar[r] & (k^*)^3 \ar[r] & k^* \ar[r] & k^*/(k^*)^3 \ar[r] & 1
    \end{tikzcd}
\end{equation}
  It then suffices to show the following:
  \begin{enumerate}
    \item The kernel of the norm homomorphism $(q^*)^3 \to (k^*)^3$ is $N_1^3$;
    \item The canonical map $(k^*)^3 / \Nm ((q^*)^3) \to k^* / \Nm (q^*)$ has trivial kernel.
  \end{enumerate}

  For (1), let $\beta \in q^*$ with $\beta = \delta^3$ with $\delta \in q^*$ be in the kernel. Then $\Nm (\delta^3) = \Nm (\beta) = 1$. If~$k$ does not contain $\zeta_3$, then this immediately implies $\Nm (\delta) = 1$ and we are done. If $k$ does contain~$\zeta_3$, and $\Nm (\delta) \neq 1$, then we may assume that $\Nm (\delta) = \zeta_3$. But then also $\beta = (\zeta_3 \delta)^3$, where now $\Nm (\zeta_3 \delta) = \zeta_3^2 \zeta_3 = 1$, so that once again we find $\beta \in N_1^3$.

  For (2), suppose that $\alpha \in k^*$ with $\alpha = \gamma^3$ with $\gamma \in k^*$ is such that $\alpha \in \Nm (q^*)$. We want to show that $\gamma$ is a norm as well. This follows from the fact that the group $(k^*) / \Nm (q^*)$ is $2$-torsion, so that the class of $\gamma$ in it coincides with that of $\alpha$.
\end{proof}

It remains to make the results so far explicit in terms of defining equations. Recall that by the arguments in \cite{MWcubic2}, cf.~Theorem~\ref{thm:cubclos}, we already have the following general existence result.

\begin{theorem}\label{thm:defeq}
  Let $L/K$ be an impurely cubic extension whose purely cubic closure $K'= K (r)$ is defined an element $r$ with minimal polynomial $X^2 - aX - b$ over $K$. Let~$\sigma$ generate the Galois group of $K'$ over $K$. Then there exist elements $P, Q \in~K$ and an element $\theta = P + Q r \in K$ such that $L/K$ admits a defining equation $x^3-3x-\alpha$ with
  \begin{equation}\label{eq:alpha}
    \alpha = \frac{\sigma (\theta)}{\theta} + \frac{\theta}{\sigma (\theta)}
    =  \frac{P + Q \sigma(r)}{P + Q r} + \frac{P + Q r}{P + Q \sigma(r)}
    = \frac{2(P^2+aPQ)+(a^2+2b)Q^2}{P^2+aPQ - bQ^2}.
  \end{equation}
\end{theorem}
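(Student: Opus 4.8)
The plan is to feed the canonical descent description of Theorem~\ref{thm:cubclos} into Hilbert's Theorem~90 and then carry out a short symmetric-function computation. Concretely, Theorem~\ref{thm:cubclos} already guarantees that $L/K$ has a defining equation of the shape $X^3 - 3X - \alpha$ in which $\alpha = s + \sigma(s)$, where $s \in K'$ is the Kummer generator of the purely cubic extension $LK'/K'$ (so that $LK'/K'$ is given by $w^3 = s$) and where the descent involution acts by $w \mapsto w^{-1}$, forcing $\sigma(s) = s^{-1}$. In particular $s$ has norm $\mathrm{Nm}_{K'/K}(s) = s\,\sigma(s) = 1$. Thus the entire task reduces to rewriting this norm-one element $s$ in terms of the prescribed generator $r$ of $K'$.

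First I would invoke Hilbert's Theorem~90 for the cyclic degree-two extension $K'/K$: since $s\,\sigma(s) = 1$, there exists $\theta \in (K')^{*}$ with $s = \sigma(\theta)/\theta$, and then indeed $\sigma(s) = \theta/\sigma(\theta) = s^{-1}$ as required. Because $\{1, r\}$ is a $K$-basis of $K' = K(r)$, I can write $\theta = P + Q r$ with uniquely determined $P, Q \in K$, which produces the elements claimed in the statement (note $Q \neq 0$, since $s \notin K$ as $L/K$ is genuinely impurely cubic). Substituting gives the first two displayed expressions for $\alpha$, namely
\[
  \alpha = s + \sigma(s) = \frac{\sigma(\theta)}{\theta} + \frac{\theta}{\sigma(\theta)} = \frac{P + Q\sigma(r)}{P + Q r} + \frac{P + Q r}{P + Q\sigma(r)}.
\]

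To reach the closed rational form, I would put the two fractions over the common denominator $\theta\,\sigma(\theta)$ and use the relations $r + \sigma(r) = a$ and $r\,\sigma(r) = -b$ coming from the minimal polynomial $X^2 - aX - b$ of $r$. This yields $\theta\,\sigma(\theta) = P^2 + aPQ - bQ^2$ for the denominator, while for the numerator one writes $\theta^2 + \sigma(\theta)^2 = (\theta + \sigma(\theta))^2 - 2\,\theta\,\sigma(\theta)$ and uses $\theta + \sigma(\theta) = 2P + aQ$ to obtain $2(P^2 + aPQ) + (a^2 + 2b)Q^2$. Dividing gives exactly the asserted formula for $\alpha$.

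There is no real obstacle here beyond correctly matching conventions: the only genuine input is Hilbert~90, which guarantees the existence of $\theta$ (and hence of $P, Q$), while everything else is a direct computation with the two Newton-type identities above. The one point demanding a little care is the verification that the element $s$ supplied by Theorem~\ref{thm:cubclos} really does have norm $1$ relative to the given involution $\sigma$, so that Hilbert~90 applies; this is immediate from $\sigma(s) = s^{-1}$, but it is the hinge on which the whole argument turns.
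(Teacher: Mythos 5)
Your proof is correct and follows essentially the route the paper intends: the paper gives no separate argument for Theorem~\ref{thm:defeq} beyond citing Theorem~\ref{thm:cubclos} and \cite{MWcubic2}, and the combination of that descent description with Hilbert's Theorem~90 applied to the norm-one element $s$ (yielding $s = \sigma(\theta)/\theta$, exactly the form $f = \sigma(\theta)/\theta$ the paper later uses in Theorem~\ref{thm:unifeq}) together with the symmetric-function computation via $r + \sigma(r) = a$ and $r\,\sigma(r) = -b$ is precisely the intended argument. Your final rational expression checks out, and your remark that $\theta \in K'$ rather than $K$ quietly corrects a typo in the statement.
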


We now restrict to the situation where sets of places $S$ and $T$ of partial and total ramification are specified, as well as a corresponding purely cubic closure $K'$ of genus zero whose ramifying places coincide with those in $S$ and that splits the places in $T$. We aim to obtain explicit $P$, $Q$, $\theta$ and $\alpha$ in \eqref{eq:alpha} such that the pole orders of $\alpha$ are minimized. We will accomplish this by descending a suitable purely cubic extension $w^3 = f$ of $K'$.

Recall that by Corollary \ref{cor:allwithclos} there are $2^{t - 1}$ isomorphism classes over $\kbar$ of cubic extensions $L/K$ of the type that we are looking for, and that these are classified by their signed ramification locus up to global negation. Let $X^2 - aX - b$ be the minimal polynomial over $K$ of a generator $r$ of the quadratic extension $K'$ of $K$. On the level of places choosing signs comes down to fixing a decomposition
\begin{equation}\label{eq:Tdecomp}
  \pp_i = \pP_i + \sigma(\pP_i) := \pP_i^- + \pP_i^+
\end{equation}
in $K'$ for every $\pp_i$ in $T$.

\begin{theorem}\label{thm:unifeq}
  Given $K'$ and $T$, as well as decompositions \eqref{eq:Tdecomp}, one can explicitly compute elements $P, Q \in K$ and $\theta = P + Q r \in K$, such that setting $f = \sigma(\theta)/\theta$ and $\alpha = f + f^{-1}$ gives a defining equation $x^3-3x-\alpha$ for $L/K$ as in \eqref{eq:alpha} corresponding to the sign choices in \eqref{eq:Tdecomp}.

  The places where $\alpha$ has a pole are exactly those in $T$; moreover, we have $v_{\pp}(\alpha) \geq~-1$ for all places $\pp \in T$. The only exception occurs if the total degree of $T$ is odd and $S$ consists of a single place of total degree two, in which case there exists one place $\pp \in T$ which has $v_{\pp}(\alpha) = -2$. The twists of $L/K$ can be obtained by replacing $f$ by $u f$ for $u \in N_1/N_1^3$.
\end{theorem}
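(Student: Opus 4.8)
The plan is to realize the extension by descending a purely cubic extension $w^3 = f$ of $K'$, exactly as in the proof of Theorem \ref{thm:norms}, but now choosing $f$ of the special shape $f = \sigma(\theta)/\theta$ with $\theta = P + Q r \in K'$. The point of this shape is that $\sigma(f) = \theta/\sigma(\theta) = 1/f$ holds \emph{automatically}, so no auxiliary constant $\lambda$ as in Theorem \ref{thm:norms} is needed and the descended generator $y = w + \sigma(w)$ has minimal polynomial $X^3 - 3X - \alpha$ with $\alpha = f + f^{-1}$ landing in $K$ on the nose. That this $\alpha$ equals the expression \eqref{eq:alpha} in terms of $P,Q,a,b$ is the content of Theorem \ref{thm:defeq}, obtained by a direct norm/trace computation using $r^2 = a r + b$, so it requires no new work here.

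I would then translate the construction into a divisor problem on the genus-zero curve attached to $K'$. Writing $E = \mathrm{div}(\theta)$, we have $\mathrm{div}(f) = \sigma(E) - E$, which depends only on $E$ modulo $\sigma$-invariant divisors. Fixing the decompositions \eqref{eq:Tdecomp} and setting $D = \sum_i (\pP_i^+ - \pP_i^-)$ as in \eqref{eq:signchoice}, the function $f$ has $\mathrm{div}(f) = D$ precisely when $E = \sum_i \pP_i^- + F$ for some $\sigma$-invariant divisor $F$, whose contribution cancels out of $\sigma(E) - E$. For $E$ to be the divisor of an actual function we need $\deg E = 0$ (equivalently $\deg F = -\deg T$), since $K'$ has genus zero and hence $\mathrm{Pic}^0(K') = 0$. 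Once such a $\theta$ exists, genus zero lets me compute it explicitly: I choose a basis of the relevant Riemann--Roch space over $k$ and solve the resulting linear system, reading off $P$ and $Q$ from $\theta = P + Q r$. Because the zeros and poles of $f$ are then exactly the simple places $\pP_i^+$ and $\pP_i^-$ above $T$, and because the poles of $f$ and of $1/f$ occur at disjoint places, $\alpha = f + f^{-1}$ has poles exactly at the places of $T$; at a split place $\pp_i$ one finds $v_{\pp_i}(\alpha) = v_{\pP_i^-}(f) = -1$, giving the bound $v_\pp(\alpha) \ge -1$.

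The delicate point, and the one I expect to be the main obstacle, is the existence of an invariant divisor $F$ of degree $-\deg T$, which is a parity question. Let $\Lambda \subseteq \ZZ$ be the subgroup of degrees of $\sigma$-invariant divisors of $K'$. A $\sigma$-orbit lying over a split or inert place of $K$ has even degree, whereas a place of $K'$ over a \emph{ramified} place of $K$, i.e.\ over a place of $S$, is $\sigma$-fixed and contributes its own degree; hence $\Lambda = 2\ZZ$ exactly when every place of $S$ has even degree, and $\Lambda = \ZZ$ as soon as $S$ contains a place of odd degree. In characteristic not $2$, Riemann--Hurwitz forces $S$ to have total degree two, so $S$ is either two places of degree one (giving $\Lambda = \ZZ$, no obstruction) or a single place of degree two (giving $\Lambda = 2\ZZ$); when $S = \emptyset$ the closure $K'$ is a constant extension, every split place of $T$ has even degree, and $\deg T$ is automatically even. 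Thus the only case in which no $F$ of degree $-\deg T$ exists is when $S$ is a single place of degree two and $\deg T$ is odd. There I would instead take $E = \sum_i \pP_i^- + \pP_j^- + F$ for an index $j$ with $\deg \pp_j$ odd (such $j$ exists since $\deg T$ is odd), so that $\deg(\sum_i \pP_i^- + \pP_j^-) = \deg T + \deg \pp_j$ is even and the required $F$ of even degree now lies in $\Lambda = 2\ZZ$. The resulting $f$ has $\mathrm{div}(f) = D + (\pP_j^+ - \pP_j^-)$, so $f$ and $1/f$ acquire a double pole at $\pP_j^-$ and $\pP_j^+$ respectively, whence $v_{\pp_j}(\alpha) = -2$ while all other poles stay simple. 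This yields precisely the stated exception, and the crux is to argue that this double pole is genuinely \emph{forced} by the congruence on $\Lambda$ rather than an artifact of a poor choice of $E$.

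Finally, the statement about twists is immediate from Lemma \ref{lem:twistisos}: the twists of the unique descent of $y^3 = f$ are the descents of $y^3 = u f$ as $u$ runs over representatives of $N_1/N_1^3$, and replacing $f$ by $u f$ replaces $\theta$ accordingly without altering any of the divisor-theoretic analysis above. In summary, the real work is concentrated in the parity bookkeeping of the subgroup $\Lambda$ and in the unavoidability of the exceptional double pole; by contrast, the explicit determination of $P$ and $Q$ reduces to routine linear algebra on a genus-zero curve, and the count $2^{t-1}$ of $\kbar$-isomorphism classes is already supplied by Corollary \ref{cor:allwithclos}.
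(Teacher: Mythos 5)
Your overall strategy is the paper's own: descend $w^3=f$ with $f=\sigma(\theta)/\theta$ so that $\sigma(f)=1/f$ holds automatically, prescribe $\mathrm{div}(\theta)=\sum_i \pP_i^- + F$ with $F$ a $\sigma$-invariant "correction" divisor, use genus zero and Riemann--Roch to compute $\theta=P+Qr$ explicitly, and locate the obstruction in the parity of $\deg T$ against the degrees of invariant divisors. Your bookkeeping via the subgroup $\Lambda$ of degrees of $\sigma$-invariant divisors is a slightly more uniform packaging of the paper's case split (the paper works with a Galois-stable degree-two divisor $\eta$ in general, a stable degree-one place when $S$ contains one, and isolates the same exceptional case where $S$ is a single place of degree two and $\deg T$ is odd), and your argument that the bound $v_\pp(\alpha)\geq -1$ is genuinely unattainable there is the same parity argument the paper gives. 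The twist statement via Lemma \ref{lem:twistisos} also matches.

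There is, however, one concrete slip in your exceptional case. You take $E=\sum_i\pP_i^-+\pP_j^-+F$, so that the coefficient of $\pP_j^-$ in $\mathrm{div}(\theta)$ is $2$ and $\mathrm{div}(f)=D+(\pP_j^+-\pP_j^-)$, i.e.\ $v_{\pP_j^+}(f)=2\equiv -1\pmod 3$. The isomorphism class of the Kummer extension $w^3=f$ over $\kbar$ depends only on $\mathrm{div}(f)\bmod 3$ (up to global negation), so your $f$ realizes the decomposition of \eqref{eq:Tdecomp} with the roles of $\pP_j^+$ and $\pP_j^-$ \emph{interchanged}, not the given one; for $t\geq 2$ this is a genuinely different extension, so the clause "corresponding to the sign choices in \eqref{eq:Tdecomp}" fails at $\pp_j$. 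The pole orders and ramification you compute are still correct, and the fix is immediate: one must perturb the coefficient at $\pP_j^-$ by a multiple of $3$ rather than by $1$, e.g.\ take the coefficient $1-3=-2$ (this is what the paper does, choosing $(\theta)=-3\pP_1^-+\sum_i\pP_i^- - e\cdot\eta$ with $\pp_1$ of odd degree), which keeps $\mathrm{div}(f)\equiv D\pmod 3$ while restoring even degree, and still yields $v_{\pp_1}(\alpha)=-2$. A second, minor, omission: your analysis of $S$ is restricted to $\cha(k)\neq 2$; in characteristic $2$ the wildly ramified quadratic $K'/K$ has $S$ equal to a single place of degree one, which falls into your unobstructed case $\Lambda=\ZZ$, but this should be said explicitly since the theorem is stated for all characteristics different from $3$.
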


\begin{proof}[Proof of Theorem \ref{thm:unifeq}]
  We start by observing that if $S$ is non-empty, then $K'$ is not a constant extension, and therefore $K$ has a degree-one place by Remark \ref{rem:PP1base}, which pulls back to a Galois stable divisor $\eta$ of total degree two for $K'$. If $S$ is empty and $K'$ is constant over $K$, then the anticanonical divisor of $K$ similarly defines a Galois stable degree-two divisor $\eta$ for $K'$.

  {\bfseries Case 1: $\boldsymbol{\deg(T)}$ is even.} In this case we write $\deg (T) = 2 e$ and choose $\theta = P + Q r$ for $P, Q \in K$ such that
  \begin{equation}\label{eq:thetadiv}
    (\theta) = \sum_{i = 1}^{t} \pP^-_i - e \cdot \eta .
  \end{equation}
  Let $f = \sigma (\theta) / \theta$. Then $\sigma (f) = 1/f$. Moreover, because $\eta$ is Galois stable we obtain
  \begin{equation}\label{eq:D}
    \left( f \right) = \sum_{i = 1}^{t} \left( \pP^+_i - \pP^-_i\right),
  \end{equation}
  which leads to the right choice of signs.  Let $L' = K' (w)$, where $w^3 = f$. It remains to determine the fixed field of $L'$ under the involution sending $(f, w) \mapsto (1/f, 1/w)$. The element $y = w + w^{-1}$ is invariant under this involution, and we have
  \begin{equation}
    y^3 = w^3 + 3 w + 3 w^{-1} + w^{-3} = 3 y + (w^3 + w^{-3}) = 3 y + (f + f^{-1}) .
  \end{equation}
  We conclude that for $\alpha = f + f^{-1}$ we obtain Eq.~\eqref{eq:alpha}, as claimed. It follows from the construction that $v_{\pp_i}(f) = -1$ for all $i$ and that these are the only poles of $f$, so that $v_{\pp}(\alpha) \geq -1$ for all places $\pp$ of $K$.

  {\bfseries Case 2: $\boldsymbol{\deg(T)}$ is odd}. In this case one of the places in $T$, say $\pp_1$, has odd degree. This means that both $K$ and $K'$ correspond to a projective line, and that both have a place of degree one.

  {\bfseries Case 2a: $\boldsymbol{\deg(T)}$ is odd and $S$ is empty or contains a place of degree one}. Under this hypothesis $K'$ has a Galois stable degree-one place $\pQ$. Indeed, if $S$ is empty, then we can take $\pQ$ to be any degree-one place of $K'$, which exists by the previous paragraph, and if $S$ contains a place $\mathfrak{q}$ of degree one, then we let $\pQ$ be the unique place of $K'$ over it. The construction above can then be adapted by instead considering $\theta = P + Q r$ such that
  \begin{equation}\label{eq:thetadiv2}
    (\theta) = \sum_{i = 1}^{t} \pP^-_i - \deg(T) \cdot \pQ .
  \end{equation}
  Setting $f = \sigma (\theta) / \theta$, we again have \eqref{eq:D}, and as $\alpha = f + f^{-1}$ with $f = \sigma(\theta)/\theta$ we indeed have $v_{\pp}(\alpha) \geq -1$ everywhere.

  {\bfseries Case 2b: $S$ consists of a single place of total degree two}. Theorem \ref{thm:defeq} shows that in order to find a minimized defining equation of the form \eqref{eq:alpha}, we need to find $\theta$ such that \eqref{eq:D} holds. Writing $(\theta) = \sum \pP^-_i - D$ shows that satisfying \eqref{eq:D} requires the existence of a Galois stable divisor $D$ of odd degree on $K'$, yet these do not exist because of the hypothesis on $S$. We conclude that in this case there is no defining polynomial $x^3 - 3 x - \alpha$ with $v (\alpha) \geq -1$ everywhere.

  Instead we let $e = (\deg(T) - 3\deg(\pp_1))/2$ and choose $\theta = P + Q r$ such that
  \begin{equation}\label{eq:thetaodd}
    (\theta) = -3 \pP_1^- + \sum_{i=1}^{t} \pP_i^{-} - e \cdot \eta .
  \end{equation}
  Then for $f = \sigma(\theta)/\theta$ we have that $(f)$ equals \eqref{eq:D} up to $3 (\pP_1^+ - \pP_1^-)$. By Kummer theory, this still yields the correct ramification, and $\alpha = f+f^{-1}$ yields the claimed defining equation.

  For all cases, the fact that the twists are as described is a consequence of Lemma~\ref{lem:twistisos}. Note that by Hilbert's Theorem 90 we may write $u = \sigma(\lambda)/\lambda$. Twisting $f$ to $uf$ is then equivalent to changing $\theta$ to $\lambda \theta$, which can still be written in the form $P'+Q'r$ for $P',Q' \in K$ and which does not affect the divisor~$D$.
\end{proof}

\begin{remark}
  Note that the poles of the element $\theta$ constructed in the proof of Theorem \ref{thm:unifeq} can be controlled if so desired, and that its effective determination is possible via the Riemann--Roch theorem over $k$. Moreover, one can use \eqref{eq:thetadiv2} as long as the ramification locus contains a rational point that is fixed by the involution, for example that given in Remark \ref{rem:ratptinf} for $Q_1$ and $Q$. By choosing a coordinate on the corresponding $\PP^1$ with a pole at the specified point, finding $\theta$ in \eqref{eq:thetadiv2} comes down to expressing the elements $\pp_i$ of $T$ as norms of polynomials. The two possibilities per $\pp_i$ of doing so then give rise to the sign choices in \eqref{eq:Tdecomp}.
\end{remark}

\begin{remark}\label{rem:min}
  We consider Case 2b of Theorem \ref{thm:unifeq} in some more detail. Let $\kappa^-$ be a degree-one place of $K'$, which the proof shows to exist, and let $\kappa^+ = \sigma (\kappa^-)$ be the Galois conjugate of $\kappa^-$. Then there exists a function $\ell$ on $K'$ of degree one such that $\mathrm{div}(\ell) = \kappa^- - \kappa^+$. It follows that $\ell \sigma(\ell) = c$ for some constant $c \in k$. Let $e = (\deg(T) + 1))/2$ and define $\theta = P + Q r$ via
  \begin{equation}
    (\theta) = \kappa^- + \sum_{i = 1}^{t} \pP^-_i - e \eta .
  \end{equation}
  Let $f = \ell \sigma(\theta)/\theta$. Then $\sigma(f) = c/f$, and by Galois stability of $\eta$ we obtain
  \begin{equation}
    \mathrm{div}\left(\ell \frac{\sigma(\theta)}{\theta}\right) = \sum_{i = 1}^{t} \left( \pP^+_i - \pP^-_i\right) .
  \end{equation}
  For $y = w + c w^{-1}$, with $w^3 = f$, one verifies that this gives the generalized defining equation
  \begin{equation}\label{eq:defeqgen}
    y^3 - 3 c y - \alpha = 0
  \end{equation}
  with $\alpha = c(f+ cf^{-1})$, which has at most simple poles at all places of $K$.

  Note that $c$ does not equal $1$, in line with the proof of Theorem \ref{thm:unifeq}. Indeed, after normalizing we may assume that $K = k (x)$, that the extension $K'$ of $K$ is defined by $y^2 = x^2 - d$, and that the points $\kappa^-$ and $\kappa^+$ are over $\infty$. In this case we obtain $\ell = x + y$ and $\ell \sigma (\ell) = (x + y)(x - y) = d$. This cannot be $1$ (or, for that matter, any square), since then the branch locus, defined by $x^2 - d$, would split over~$k$, contrary to our hypothesis that it consists of a single place of degree two.

  The ramification of a generalized defining equation of the form \eqref{eq:defeqgen} can still be read off: Putting $y = w + c w^{-1}$ yields
  \begin{equation}
      w^3 + 3 c w + 3 c^2 w^{-1} + c^3 w^{-3} = 3 c w + 3 c^2 w^{-1} + \alpha,
  \end{equation}
  or $w^3 + c^3 w^{-3} = \alpha$. This means that if we let $s$ be a root of the polynomial $X^2 - \alpha X + c^3$, we have $y = w + c w^{-1}$, where $w$ satisfies $w^3 = s$. Thus the purely cubic closure is defined by $X^2 - \alpha X + c^3$. Therefore, arguing as in \cite{MWcubic} shows that the total ramification of $L/K$ is at the poles of $\alpha$ whose multiplicity is not divisible by three, and that the partial ramification is at the zeros of $\alpha^2 - 4 c^3$ whose multiplicity is not divisible by two.
\end{remark}

\begin{remark}
  The techniques used in this section do not require that the base field $K$ be $k (x)$, as the only crucial hypothesis was that the purely cubic closure $K'$ was a conic. We may equally well take $K$ to be the function field of a non-trivial conic over $k$.
\end{remark}

\section{Bi-twists}\label{sec:biisotwists}

We now consider bi-twists for some classes of impurely cubic extensions. We cannot hope to cover all cases, as this would be equivalent to fully solving the Galois descent problem for superelliptic curves of exponent three. We classify the extensions obtained by the ramification type $R$ of their base extension $L \kbar / K \kbar$. The Riemann--Hurwitz formula yields four possible values for $R$ and the genus $g_L$ of $L$ which we now discuss separately, first for fields $k$ of characteristic unequal to $2$.\\

$\mathbf{(R, g_L) = (3^2, 0)}:$ This case is covered by Theorem \ref{thm:biisog0}. Restricting to conics $Q$ that admit a point will give those bi-twists $Y \to X$ for which the target curve~$X$ is still isomorphic to $\PP^1$ over $K$. We now consider these in some more detail. In Theorem \ref{thm:biisog0} these correspond to the pairs $(Q, D)$ with $Q \cong \PP^1$. Such pairs are classified by the splitting field of the degree-two divisor $D$, and we will construct corresponding twists in the spirit of Theorem \ref{thm:unifeq}. To do this, we choose a quadratic extension $q$ of the base field $k$ along with a defining polynomial $X^2 + a X + b$ for $q$ over $k$.

\begin{proposition}\label{prop:33}
  An equation of the bi-twist of $y^3 = x$ corresponding to $q$ is given by
\begin{equation*}
    y^3 = 3 y + \frac{2 x^2 + 2 a x + (a^2 - 2 b)}{x^2 + a x + b} .
\end{equation*}
\end{proposition}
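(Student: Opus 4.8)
The plan is to realize this bi-twist concretely as an impurely cubic extension and then apply the descent machinery of Theorem~\ref{thm:norms} together with the trace construction of Theorem~\ref{thm:cubclos}. By Theorem~\ref{thm:biisog0}, the bi-twist of $y^3 = x$ attached to $q$ is the pair $(Q, D)$ with $Q \cong \PP^1$ and $D$ the degree-two divisor whose splitting field is $q$; concretely, on $K = k(x)$ this $D$ is the single place of degree two cut out by $x^2 + ax + b$, and the cover is totally (type $3^2$) ramified there with no partial ramification, so $S = \emptyset$ and $T = \{x^2 + ax + b\}$. Since $\cha(k) \neq 2$, write $\rho, \sigma(\rho)$ for the two roots of $X^2 + aX + b$, so that $\rho + \sigma(\rho) = -a$ and $\rho\,\sigma(\rho) = b$. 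First I would check that the purely cubic closure is the constant extension $K' = qK = q(x)$, which is of genus zero, and that in $K'$ the place $T$ splits into the two degree-one places $x = \rho$ and $x = \sigma(\rho)$ interchanged by $\sigma$; this is exactly the splitting hypothesis of Theorem~\ref{thm:norms}.

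Next I would carry out the descent as in Case~1 of Theorem~\ref{thm:unifeq} (here $\deg(T) = 2$ is even, so $e = 1$ and $\eta$ is the Galois-stable place above $\infty$). Taking $\theta = x - \rho$, whose divisor on $K'$ is $[x = \rho] - \eta$, and setting $f = \sigma(\theta)/\theta = (x - \sigma(\rho))/(x - \rho)$, one has $\sigma(f) = 1/f$, so by Theorem~\ref{thm:cubclos} the involution $(f, w) \mapsto (1/f, 1/w)$ descends the purely cubic extension $w^3 = f$ of $K'$ to a cubic extension $L/K$. Its generator $y = w + w^{-1}$ then has minimal polynomial $X^3 - 3X - \alpha$ with $\alpha = f + f^{-1}$. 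Because $f \notin K$ but $K(f) \subseteq qK$, one also sees directly that $K' = K(f) = qK$ is the purely cubic closure, confirming the identification above.

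It then remains to compute $\alpha = f + f^{-1} = \bigl((x-\rho)^2 + (x - \sigma(\rho))^2\bigr)/\bigl((x - \rho)(x - \sigma(\rho))\bigr)$. The denominator is $x^2 + ax + b$, while the numerator equals $(2x + a)^2 - 2(x^2 + ax + b) = 2x^2 + 2ax + (a^2 - 2b)$, using the two symmetric-function identities above. This yields
\begin{equation*}
  \alpha = \frac{2x^2 + 2ax + (a^2 - 2b)}{x^2 + ax + b},
\end{equation*}
and hence the defining equation $y^3 = 3y + \alpha$ claimed in the statement. Alternatively, one can read this off from the formula in Theorem~\ref{thm:defeq} with $\theta = P + Qr$, $P = x$, $Q = -1$, after matching its convention $X^2 - aX - b$ against the present $X^2 + aX + b$; this reparametrization is the source of the sign difference between the $a^2 + 2b$ appearing there and the $a^2 - 2b$ appearing here.

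The computations above are entirely routine; the only genuine point to get right is the bookkeeping that pins down $L/K$ as the correct bi-twist rather than merely some cubic extension with the prescribed ramification. Concretely, I would confirm that the resulting pair $(\PP^1, D)$ has $D$ with splitting field precisely $q$ --- equivalently, via Corollary~\ref{cor:pcvsdisc}, that the purely cubic closure is $qK$ --- so that under the classification of Theorem~\ref{thm:biisog0} it is indeed the bi-twist indexed by $q$. This is immediate from $K' = qK$ established above, so I expect no real obstacle beyond tracking the sign conventions.
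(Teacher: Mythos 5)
Your proposal is correct and follows essentially the same route as the paper: both descend the purely cubic extension $w^3 = f$ with $f = (x-\rho)/(x-\sigma(\rho))$ over the constant quadratic extension $qK$ via Theorem~\ref{thm:unifeq}, and compute $\alpha = f + f^{-1} = \bigl(2x^2+2ax+(a^2-2b)\bigr)/(x^2+ax+b)$ using the symmetric functions of the roots. Your additional bookkeeping identifying the result as the bi-twist indexed by $q$ matches the paper's closing remark that the ramification locus splits over $q$.
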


\begin{proof}
  Consider the roots $r$ and $\rbar$ of the polynomial $X^2 + a X + b$ in $q$. We can take $f = (x - r)/(x + \rbar)$ in Theorem \ref{thm:unifeq} over the constant quadratic extension $q K$. This yields
\begin{equation*}
    \begin{split}
      \alpha & = f + 1/f = \frac{x + \rbar}{x - \rbar} + \frac{x - \rbar}{x + \rbar}
      = \frac{(x - \rbar)^2 + (x - r)^2}{(x - r)(x - \rbar)} \\
      & = \frac{2 x^2 - 2 (r + \rbar) x + r^2 + \rbar^2}{x^2 + a x + b}
        = \frac{2 x^2 + 2 a x + (a^2 - 2 b)}{x^2 + a x + b} .
    \end{split}
\end{equation*}
  Since the ramification locus splits over $q$, this equation gives the requested bi-twist.
\end{proof}

\begin{remark}
  The results above are uniform in that they also function in characteristic $2$. Taking a defining polynomial for $q$ in Artin--Schreier form $X^2 + X + a$ then gives
\begin{equation*}
    \alpha = \frac{1}{x^2 + x + a} .
\end{equation*}
  If the characteristic does not equal $2$, then taking a defining polynomial for $q$ in Kummer form $X^2 - d$ yields
\begin{equation*}
    \alpha = 2 \frac{x^2 + d}{x^2 - d} .
\end{equation*}
\end{remark}

\begin{remark}
  When $k$ is a finite field, the above result shows how to obtain the non-trivial bi-twist mentioned in Corollary~\ref{cor:biisog0}. Its specific defining equation depends on the choice of an irreducible polynomial defining the quadratic extension of $k$.
\end{remark}

$\mathbf{(R, g_L) = (3^3, 1)}:$ This case is covered by Theorem \ref{thm:biisog1}, which also applies when~$k$ is of characteristic $2$. Note that for all these bi-twists $Y \to X$ the base curve~$X$ is indeed isomorphic to $\PP^1$. \\

$\mathbf{(R, g_L) = (3^1\, 2^2, 0)}:$ By Proposition \ref{prop:notwists}, the covers with this ramification do not admit non-trivial twists. However, there is still the possibility of bi-twisting, because there is an automorphism switching the branch points below the partial ramification points. This can be seen by considering the cover
\begin{equation}\label{eq:31220}
  y^3 = 3 y + x,
\end{equation}
which has the given ramification type (namely, full ramification occurring over $x~=~\infty$ and partial ramification over $x = \pm 2$). The relevant bi-automorphism group is $\ZZ / 2 \ZZ$. Therefore, as in the previous case, it suffices to find the bi-twist of the cover \eqref{eq:31220} corresponding to a fixed Galois structure on the branch locus, cut out by the quadratic extension $q$ of the base field defined by the polynomial $X^2 - d$ for instance.

\begin{proposition}\label{prop:322}
  An equation of the bi-twist of $y^3 = 3 y + x$ corresponding to $q$ is given by
\begin{equation*}
    y^3 = 3 y + 2 \frac{2 x^2 - d}{d} .
\end{equation*}
\end{proposition}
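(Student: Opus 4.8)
The plan is to find an explicit bi-twist of the cover $y^3 = 3y + x$ corresponding to the quadratic extension $q = k(\sqrt{d})$, mimicking the strategy of Proposition~\ref{prop:33} and the constructions in the proof of Theorem~\ref{thm:unifeq}. First I would identify the geometric data: the cover \eqref{eq:31220} has total ramification at $x = \infty$ and partial ramification at $x = \pm 2$, so the branch locus consists of the single total-ramification point $\infty$ together with the two partial points $\{+2, -2\}$. Bi-twisting by $q$ amounts to imposing on the pair of partial branch points $\{\pm 2\}$ the nontrivial Galois structure cut out by $X^2 - d$, so that after the twist they become the Galois orbit of $\pm\sqrt{d}$ (or a suitable affine image thereof) while the totally ramified point at $\infty$, being rational, is preserved.

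Next I would realize the twist concretely by pulling the standard cover back along a Möbius transformation $\psi$ of $\PP^1$ (defined over $k$) that transports the split branch configuration $\{2, -2, \infty\}$ to the twisted configuration $\{\sqrt{d}, -\sqrt{d}, \infty\}$. Since all three points are to be sent to a $k$-rational configuration with $\infty \mapsto \infty$, the relevant transformation is simply a scaling $x \mapsto (2/\sqrt{d})\, x$, i.e.\ the rescaling that carries $\pm 2$ to $\pm\sqrt{d}$ while fixing $\infty$. I would substitute this into \eqref{eq:31220}, absorbing cube factors into $y$ as in the remark following Theorem~\ref{thm:biisog1}, and read off the resulting defining equation. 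A direct computation of this sort should yield $\alpha = 2(2x^2 - d)/d$ as the right-hand side past $3y$, matching the claimed equation. To finish, I would invoke the classification of these bi-twists: by the analysis of the $(R,g_L) = (3^1\,2^2, 0)$ case the bi-automorphism group is $\ZZ/2\ZZ$ (switching the two partial branch points), so a bi-twist is determined exactly by the Galois structure on $\{\pm 2\}$, and the equation produced realizes precisely the nontrivial structure cut out by $X^2 - d$.

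I expect the main obstacle to be bookkeeping rather than conceptual: one must check that the totally ramified point over $x = \infty$ genuinely remains rational and totally ramified after the substitution, that the partial ramification really moves to the zeros of $x^2 - d/4$ (equivalently that the discriminant data $\alpha^2 - 4$ of the associated cubic picks out the correct branch locus), and that no spurious scaling of $y$ has been introduced that would alter the isomorphism class over $k$. Concretely, I would verify using the ramification description recalled in Remark~\ref{rem:min} (total ramification at poles of $\alpha$ of multiplicity prime to $3$, partial ramification at zeros of $\alpha^2 - 4$ of odd multiplicity) that $\alpha = 2(2x^2 - d)/d$ indeed has its pole of odd order at $\infty$ and that $\alpha^2 - 4$ vanishes to odd order exactly at the two conjugate points over $x^2 = d/4$. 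Since the purely cubic closure is invariant under twisting (Lemma~\ref{lem:pcinv}) and here is forced to be the constant extension $q$, confirming that the branch locus splits over $q$ but not over $k$ certifies that the displayed equation is the sought nontrivial bi-twist.
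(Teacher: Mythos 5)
Your geometric framing is right---the bi-automorphism group is $\ZZ/2\ZZ$, bi-twists are classified by the Galois structure imposed on the pair of partial branch points, and the totally ramified point over $\infty$ stays rational---but the mechanism you propose for actually producing the equation does not work. A M\"obius transformation defined over $k$ cannot carry the split pair $\{2,-2\}$ to the non-split pair $\{\pm\sqrt{d}\}$, since $k$-rational maps preserve Galois orbits; the scaling $x \mapsto (2/\sqrt{d})\,x$ is only defined over $q$. Substituting it into $y^3 = 3y + x$ yields $y^3 = 3y + (2/\sqrt{d})\,x$, which is still linear in $x$ and has coefficients in $q$ rather than $k$, and there are no ``cube factors to absorb into $y$'': the remark after Theorem \ref{thm:biisog1} concerns purely cubic equations $y^3 = g$, whereas for the shape $y^3 - 3y = \alpha$ a rescaling $y \mapsto \lambda y$ destroys the coefficient $3$ of the linear term. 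Implementing the twist correctly requires transforming $x$ and $y$ simultaneously (for instance $x = \sqrt{d}\,x'$, $y = \sqrt{d}\,y'$ gives the $k$-rational but differently normalized form $y'^3 = 3y'/d + x'/d$), and recovering the stated normal form $y^3 = 3y + 2(2x^2-d)/d$ is exactly what the paper's descent computation achieves: following Theorem \ref{thm:unifeq}, one passes to the purely cubic closure $u^2 = x^2 - d$ (ramified at the new partial branch locus), takes the function $f = (x+u)/(x-u)$ whose zero and pole are the two rational points over $\infty$, and sets $\alpha = f + f^{-1} = 2(2x^2-d)/d$. That trace computation is the source of the quadratic numerator and is the step your outline is missing.

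There are also two slips in your verification plan: the pole of $\alpha = 2(2x^2-d)/d$ at $\infty$ has order $2$, and the criterion for total ramification is that the pole order be coprime to $3$, not odd; moreover $\alpha^2 - 4 = 16x^2(x^2-d)/d^2$, so the partial branch locus sits at $x^2 = d$, not $x^2 = d/4$ (the factor $x^2$ has even multiplicity and contributes no ramification). With those corrections your ``verify the candidate'' strategy would in fact yield a legitimate alternative proof, since the ramification data together with the uniqueness of the cover over $\kbar$ with given branch cycle types pins down the bi-isomorphism class; but as written the derivation of the candidate equation is not valid.
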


\begin{proof}
  We follow Theorem \ref{thm:unifeq}. Let the branch locus be defined by the zeros of $x^2 - d$. The corresponding quadratic extension defined by $u^2 = x^2 - d$ has two rational points over $\infty$. A function with a pole and a zero at these points is given by $(x + u)/(x - u)$. We therefore descend the extension of the projective line with coordinate $x$ defined by
\begin{equation*}
    \left\{
      \begin{array}{ll}
        u^2 & = x^2 - d \\
        w^3 & = \frac{x + u}{x - u}
      \end{array} \right.
\end{equation*}
  under the involution $(u, w) \mapsto (-u, 1/w)$, which yields
\begin{equation*}
    \begin{split}
      \alpha & = f + 1/f = \frac{x + u}{x - u} + \frac{x - u}{x + u}
      = \frac{(x + u)^2 + (x - u)^2}{(x - u)(x + u)} \\
      & = \frac{2 x^2  + 2 u^2}{x^2 - u^2} = 2 \frac{2 x^2 - d}{d} .
    \end{split}
\end{equation*}
\end{proof}

\begin{remark}
  As in the previous case, we obtain two bi-twists when the base field~$k$ is finite, since $k$ then admits a unique quadratic extension $q$. Its specific defining equation depends on the choice of an element $d$ whose square root defines $q$ over $k$.
\end{remark}

$\mathbf{(R, g_L) = (3^2\, 2^2, 1)}:$ We first consider these covers over the algebraic closure $\kbar$. By Proposition~\ref{prop:Qkbar}(1), we may assume that the purely cubic closure is defined by $u^2 = x$, so that the places of partial ramification are over $\infty$ and $0$. We ensure that full ramification takes places over $1$. This gives rise to the versal family of covers defined by descending
\begin{equation}\label{eq:3322up}
  \left\{
    \begin{array}{ll}
      u^2 & = x \\
      w^3 & = \frac{u + 1}{u - 1} \frac{u + \lambda}{u - \lambda}
    \end{array} \right.
\end{equation}
for $\lambda \not\in \left\{ \infty, 0, \pm 1 \right\}$. The resulting descents are given by
\begin{equation}\label{eq:3322down}
  y^3 = y + \frac{x^2 + (\lambda^2 + 4 \lambda + 1) x + \lambda^2}{x^2 - (\lambda^2 + 1) x + \lambda^2} .
\end{equation}

\begin{proposition}\label{prop:3322iso}
  Two covers obtained from \eqref{eq:3322down} for the parameter values $\lambda_1, \lambda_2$ are isomorphic if and only if $\lambda_1 = \lambda_2$ and they have trivial automorphism group over~$\kbar$. They are bi-isomorphic if and only if $\lambda_1 = \lambda_2^{\pm 1}$ and they have bi-automorphism group $\ZZ / 2 \ZZ$ over $\kbar$.
\end{proposition}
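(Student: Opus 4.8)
The plan is to read off the branch geometry of the cover $Y \to X = \PP^1_{\kbar}$ from \eqref{eq:3322up} and to recover $\lambda$ (up to the stated ambiguities) from it together with the Kummer descent data, matching these data first under isomorphisms and then under bi-isomorphisms. From \eqref{eq:3322up}, the double cover $u^2 = x$ is branched over $\{0, \infty\}$, so these are the two points of partial ($2^2$) ramification, while the purely cubic part $w^3 = \tfrac{u+1}{u-1}\tfrac{u+\lambda}{u-\lambda}$ has $\mathrm{div}(w^3)$ with simple zeros at $u = -1, -\lambda$ and simple poles at $u = 1, \lambda$; these lie over $x = 1$ and $x = \lambda^2$, so $\{1, \lambda^2\}$ is the full ($3^2$) locus (matching the denominator $(x-1)(x-\lambda^2)$ in \eqref{eq:3322down}). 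By Corollary \ref{cor:allwithclos}, once the purely cubic closure $K' = \kbar(u)$ and the full locus are fixed there are exactly $2^{t-1} = 2$ covers up to $\kbar$-isomorphism, distinguished by the sign class of the descent divisor \eqref{eq:signchoice}. Comparing $\mathrm{div}(w^3)$ for the parameters $\lambda$ and $-\lambda$ shows that they realize the two distinct sign classes, since they differ exactly by interchanging pole and zero at the preimages $u = \pm\lambda$ of $x = \lambda^2$.

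For the isomorphism statement I use that an isomorphism in the sense of \eqref{eq:iso} fixes the base $\PP^1$. Hence two covers in the family are isomorphic only if they share both the partial locus (automatic) and the full locus, which forces $\lambda_1^2 = \lambda_2^2$; moreover the isomorphism preserves the sign class, which by the previous paragraph upgrades this to $\lambda_1 = \lambda_2$. Since the partial locus $S = \{0, \infty\}$ is non-empty, Proposition \ref{prop:notwists} shows that each such cover has trivial automorphism group over $\kbar$, which finishes the isomorphism claim.

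For bi-isomorphisms I allow a Möbius transformation $\psi$ on the base as in \eqref{eq:biiso}. As $\psi$ must carry the partial locus of one cover to that of the other, both equal to $\{0, \infty\}$, it preserves $\{0, \infty\}$ and hence lies in $\GGm \rtimes C_2$, i.e.\ $x \mapsto cx$ or $x \mapsto c/x$. Imposing that $\psi$ also send $\{1, \lambda_1^2\}$ to $\{1, \lambda_2^2\}$ yields in every case $\lambda_2^2 \in \{\lambda_1^2, \lambda_1^{-2}\}$, that is $\lambda_2 \in \{\pm\lambda_1, \pm\lambda_1^{-1}\}$. To discard the two spurious signs I lift $\psi$ to the quadratic extension $\kbar(u)$ (as $u \mapsto \pm\sqrt{c}\,u$ or $u \mapsto \pm\sqrt{c}/u$) and push forward $\mathrm{div}(w^3)$: matching the result, modulo the global negation in \eqref{eq:signchoice}, with the divisor of the second cover singles out the sign-preserving cases. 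These turn out to be exactly $\lambda_2 = \lambda_1$, realized by $\psi = \mathrm{id}$, and $\lambda_2 = \lambda_1^{-1}$, realized by $\psi(x) = 1/x$ lifting to $u \mapsto 1/u$, whereas the lifts producing $-\lambda_1$ or $-\lambda_1^{-1}$ reverse the sign class and hence give no bi-isomorphism. This establishes that the covers are bi-isomorphic precisely when $\lambda_1 = \lambda_2^{\pm 1}$.

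For the bi-automorphism group I specialize to $\lambda_1 = \lambda_2 = \lambda$. For generic $\lambda$ (i.e.\ $\lambda^4 \neq 1$) the only Möbius transformations preserving both $\{0, \infty\}$ and $\{1, \lambda^2\}$ are $\mathrm{id}$ and the swap $\psi(x) = \lambda^2/x$; lifting the latter to $\tau : u \mapsto \lambda/u$ and computing gives $\tau_* \,\mathrm{div}(w^3) = \mathrm{div}(w^3)$, so the swap fixes the sign class and is a genuine bi-automorphism of order two. Since $\Aut(Y/X)$ over $\kbar$ is trivial by Proposition \ref{prop:notwists}, sending a bi-automorphism $(\phi, \psi)$ to $\psi$ is injective, so the bi-automorphism group is exactly $\{\mathrm{id}, \psi\} \cong \ZZ/2\ZZ$; at the remaining values $\lambda^4 = 1$ I check that the two extra branch-preserving transformations reverse the sign class (they relate the cover to the one for $\lambda^{-1} = -\lambda$) and so do not enlarge the group, the swap $\lambda^2/x$ remaining the unique nontrivial bi-automorphism. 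I expect the main obstacle to be the careful bookkeeping of the sign class under the lifts of $\psi$ to $\kbar(u)$: this is exactly what separates $\lambda$ from $-\lambda$, eliminates the extraneous sign solutions, and pins the bi-automorphism group down to $\ZZ/2\ZZ$ rather than to the larger group of branch-data-preserving Möbius transformations.
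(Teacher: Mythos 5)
Your argument is correct and follows essentially the same route as the paper's proof: reduce to the Kummer covers \eqref{eq:3322up} over the purely cubic closure, constrain the base map by its action on the partial and full branch loci, and eliminate the spurious candidates ($-\lambda$, $-\lambda^{-1}$, and the extra branch-preserving maps when $\lambda^4 = 1$) by checking the effect of the lift on $\mathrm{div}(w^3)$. The only cosmetic difference is that you organize this elimination via the sign classes of \eqref{eq:signchoice} and Corollary \ref{cor:allwithclos}, whereas the paper compares the defining equations upstairs directly and explicitly verifies commutation with the descent involution; the two bookkeeping devices are equivalent by Lemma \ref{lem:pcisodesc}.
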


\begin{proof}
  By Lemma \ref{lem:pcisodesc}, it suffices to consider the covers \eqref{eq:3322up}. An isomorphism maps the common purely cubic closure to itself and therefore either fixes $u$ or sends it to $-u$. Consideration of the branch locus in~$u$ in the former case shows that $w$ is mapped to $\zeta_3^i w$ for some $i$ and that this does not change the defining equation. Similarly, the latter possibility yields that $w$ is sent to $\zeta_3^i / w$. Once more the defining equation in \eqref{eq:3322up} is not affected. The covers \eqref{eq:3322down} have trivial automorphism group in light of Proposition~\ref{prop:notwists}.

  Now we consider the bi-automorphisms of \eqref{eq:3322up}. By what went before, it suffices to analyse the possible action on the branch locus, which is partitioned into the branch points $\left\{ \infty, 0 \right\}$ with partial ramification upstairs and the branch points $\left\{ 1, \lambda^2 \right\}$ with full ramification upstairs. Generically there is a unique non-trivial automorphism of $\PP^1$ that maps these pairs to themselves, namely the involution $x \mapsto \lambda^2 / x$. This lifts to the bi-automorphism $(x, u, w) \mapsto (\lambda^2 / x, \lambda / u, w)$ of \eqref{eq:3322up}, which commutes with the involution $(x, u, w) \mapsto (x, -u, 1/w)$ used in the descent and therefore defines a bi-automorphism of \eqref{eq:3322down}. The other lifts do not give rise to more bi-automorphisms of \eqref{eq:3322down}.

  It therefore remains to consider the cases where the sets $\left\{ \infty, 0 \right\}$ and $\left\{ 1, \lambda^2 \right\}$ are mapped to themselves by other automorphisms of $\PP^1$. Since the automorphism found previously switches $\infty$ and $0$, we may assume that these points are fixed. The automorphism is then of the form $x \mapsto c x$. A non-trivial such automorphism can only map $\left\{ 1, \lambda^2 \right\}$ to itself if $\lambda = i$, in which case also $c = -1$. The automorphism then lifts by sending $u$ to $i u$. Applying this to the rational fraction
\begin{equation*}
    \frac{u + 1}{u - 1} \cdot \frac{u + i}{u - i}
\end{equation*}
  defining $w^3$ maps it to
\begin{equation*}
    \frac{u + 1}{u - 1} \cdot \frac{u - i}{u + i},
\end{equation*}
  meaning that we do not in fact obtain an automorphism in this case.

  The analysis of this bi-automorphism group also applies when analysing bi-isomorphisms between covers defined by different parameter values $\lambda_1$ and $\lambda_2$. Composing with the non-trivial bi-automorphism described above if needed, we may assume that the isomorphism maps $x \mapsto c x$ for some scalar $c$. Similarly, applying the descent involution $(x, u, w) \mapsto (x, -u, 1/w)$ if necessary, we may assume that the branch locus $\left\{ 1 , \lambda_1 \right\}$ is sent to $\left\{ 1, \lambda_2 \right\}$. This can only happen if $\lambda_1 = \lambda_2^{-1}$, in which case we obtain a bi-isomorphism $(x, u, w) \mapsto (\lambda_2^2 x, \lambda_2 u, w)$ that intertwines the descent involutions and hence induces an isomorphism of the descended covers~\eqref{eq:3322down}.
\end{proof}

We conclude that $\mu = \lambda + \lambda^{-1} \in k \setminus \left\{ \infty, \pm 2 \right\}$ parametrizes the bi-automorphism classes of covers~\eqref{eq:3322down}. Moreover, given a parameter value for $\mu$ in the base field $k$, we can find a corresponding bi-isomorphism class defined over $k$ as follows.

\begin{proposition}
  Given $\mu \in k$ with $\mu \not\in \left\{ \infty, \pm 2 \right\}$, a corresponding cover is defined by
  \begin{equation}\label{eq:3322desc}
    y^3 = 3 y + 2 \frac{x^2 + (\mu + 4) x + 1}{x^2 - \mu x + 1} .
  \end{equation}
  This cover branches partially above $\left\{ \infty, 0 \right\}$ and fully above the roots of $x^2 - \mu x + 1$.
\end{proposition}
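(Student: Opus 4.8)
The plan is to follow the descent method of Theorem~\ref{thm:unifeq}, exactly as in the proofs of Propositions~\ref{prop:33} and~\ref{prop:322}, and then to match the outcome against the versal family \eqref{eq:3322up}--\eqref{eq:3322down}. First I would introduce an element $\lambda$ with $\lambda + \lambda^{-1} = \mu$, that is, a root of $X^2 - \mu X + 1$; since $\mu \in k$ this $\lambda$ lives in an at most quadratic extension of $k$, but the final equation will turn out to be defined over $k$. I take the purely cubic closure to be $u^2 = x$, whose branch locus is $\left\{ 0, \infty \right\}$, and over $\kbar$ I descend the extension $w^3 = f$ with
\[
  f = \frac{(u + \sqrt{\lambda})(u + \lambda^{-1/2})}{(u - \sqrt{\lambda})(u - \lambda^{-1/2})}
\]
under the involution $\sigma : u \mapsto -u$. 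The first routine check is that $\sigma (f) = 1/f$, which is immediate from the shape of $f$, so that the descent of Theorem~\ref{thm:unifeq} applies and produces a defining equation $y^3 - 3 y - \alpha$ with $\alpha = f + f^{-1}$ and $y = w + w^{-1}$.

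The central computation is to evaluate $\alpha = f + f^{-1}$. Writing the numerator of $f + f^{-1}$ as $A^2 + B^2$ with $A = (u + \sqrt{\lambda})(u + \lambda^{-1/2})$ and $B = \sigma (A)$, its denominator factors as $(u^2 - \lambda)(u^2 - \lambda^{-1}) = x^2 - \mu x + 1$, and using $u^2 = x$ together with $(\sqrt{\lambda} + \lambda^{-1/2})^2 = \mu + 2$ I expect all half-integer powers of $\lambda$ to cancel, leaving
\[
  \alpha = \frac{2 \bigl( x^2 + (\mu + 4) x + 1 \bigr)}{x^2 - \mu x + 1},
\]
which is precisely \eqref{eq:3322desc}. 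In particular $\alpha \in k (x)$, so although $f$ and $\lambda$ are only defined over an extension of $k$, the cover itself descends to $k$.

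For the ramification I would read it off directly. The zeros and poles of $f$ lie over $u = \pm \sqrt{\lambda}$ and $u = \pm \lambda^{-1/2}$, all of order one, which map under $u^2 = x$ to $x = \lambda$ and $x = \lambda^{-1}$, the roots of $x^2 - \mu x + 1$; by Kummer theory these yield full ramification upstairs, and being split for $\sigma$ they descend to full ramification of $L/K$ there. Since $f (0) = f (\infty) = 1$, the cover $w^3 = f$ is unramified over $\left\{ 0, \infty \right\}$, so the only partial ramification of $L/K$ is the one forced by the ramification of the purely cubic closure $u^2 = x$, namely over $\left\{ 0, \infty \right\}$. As a fully-over-$k$ confirmation one may instead compute $\alpha^2 - 4 = 16 (\mu + 2)\, x (x + 1)^2 / (x^2 - \mu x + 1)^2$ and invoke the ramification criterion of \cite{MWcubic} (total ramification at the poles of $\alpha$ of multiplicity prime to $3$, partial ramification at the zeros of $\alpha^2 - 4$ of odd multiplicity), which records total ramification over the roots of $x^2 - \mu x + 1$ and partial ramification over $\left\{ 0, \infty \right\}$, the double zero at $x = -1$ being invisible.

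Finally I would confirm that this cover lies in the bi-isomorphism class indexed by $\mu$. The substitution $u \mapsto \sqrt{\lambda}\, u$ carries the function $f$ of \eqref{eq:3322up} to the $f$ above, and downstairs corresponds to the M\"obius map $x \mapsto \lambda x$, which fixes $\left\{ 0, \infty \right\}$ and sends the full-ramification locus $\left\{ 1, \lambda^2 \right\}$ of \eqref{eq:3322up} to $\left\{ \lambda^{-1}, \lambda \right\}$. Hence \eqref{eq:3322desc} is bi-isomorphic over $\kbar$ to the member of the versal family with parameter $\lambda$, and therefore to the one with parameter $\mu = \lambda + \lambda^{-1}$ by Proposition~\ref{prop:3322iso}. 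The only real obstacle is the bookkeeping of the square roots $\sqrt{\lambda}$: they appear in $f$ and in the coordinate change, and one must verify that every quantity asserted to live over $k$ (namely $\alpha$ and the branch data, which depend only on $\mu$) is genuinely invariant both under $\lambda \mapsto \lambda^{-1}$ and under the sign ambiguity $\sqrt{\lambda} \mapsto -\sqrt{\lambda}$, the latter sending $f \mapsto 1/f$ and hence fixing $\alpha$.
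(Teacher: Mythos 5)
Your proposal is correct and amounts to the same idea as the paper's (one-line) proof, namely the rescaling $x \mapsto \lambda x$ applied to the versal family \eqref{eq:3322down}, which you realize upstairs as $u \mapsto \sqrt{\lambda}\,u$ and then carry through the descent explicitly; your computation of $\alpha = f + f^{-1}$ and of $\alpha^2 - 4$ checks out. The only difference is that you verify in detail (ramification, rationality over $k$, identification of the bi-isomorphism class) what the paper leaves implicit, which if anything also confirms the normalization $y^3 = 3y + 2(\cdots)$ of \eqref{eq:3322desc} against the slightly differently normalized \eqref{eq:3322down}.
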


\begin{proof}
  This equation can be obtained via the substitution $x \mapsto \lambda x$ in \eqref{eq:3322down}.
\end{proof}

It remains to determine the bi-twists of \eqref{eq:3322desc}. The common non-trivial bi-automorphism is given by $(x, y) \mapsto (1/x, y)$. Substituting $x \to \frac{x + 1}{x - 1}$ and writing the parameter as $\mu = 2 - 4/(1 - \nu)$ yields the alternative family
\begin{equation}\label{eq:altfam}
  y^3 = 3 y + 2 \frac{(2 \nu - 1) x^2 - \nu}{x^2 - \nu}
\end{equation}
with $\nu \not\in \left\{ \infty, 0, 1 \right\}$. This family has partial ramification at $\left\{ \pm 1 \right\}$ and full ramification at the roots of $x^2 - \nu$. The bi-twists of the covers in this family are quickly identified, as follows.

\begin{proposition}\label{prop:3322}
  Given $\nu \in k$ with $\nu \not\in \left\{ \infty, 0, 1 \right\}$, the bi-twists of \eqref{eq:altfam} are parametrized by $k^* / (k^*)^2$. The class defined by an element  $d$ of $k^*$ corresponds to the equation
\begin{equation*}
    y^3 = 3 y + 2 \frac{(2 \nu - 1) x^2 - d \nu}{x^2 - d \nu} .
\end{equation*}
\end{proposition}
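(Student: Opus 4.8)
The plan is to produce the bi-twists of the family \eqref{eq:altfam} by applying the general descent machinery of Theorem \ref{thm:unifeq}, exactly as in the proofs of Propositions \ref{prop:33} and \ref{prop:322}, but now tracking the quadratic twist governing the partial ramification. The key observation is that the cover \eqref{eq:altfam} has ramification type $(R, g_L) = (3^2\,2^2, 1)$, so by Proposition \ref{prop:notwists} it admits no non-trivial twists in the sense of Definition \ref{def:twist}; consequently all the variation must come from bi-twisting, that is, from changing the Galois structure on the \emph{branch} locus downstairs rather than on the cover itself. The partial ramification, occurring at $\left\{ \pm 1 \right\}$, is what is cut out by the purely cubic closure, and by Lemma \ref{lem:pcinv} this purely cubic closure is invariant under twisting. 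Therefore the bi-twists are parametrized precisely by the possible quadratic extensions realizing the purely cubic closure that ramifies at $\left\{ \pm 1 \right\}$, which in characteristic not equal to $2$ are classified by $k^*/(k^*)^2$ via Proposition \ref{prop:Qkbar}(3).

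Concretely, I would first identify the purely cubic closure of \eqref{eq:altfam}. Following the computation in Remark \ref{rem:min}, the purely cubic closure is defined by a polynomial of the form $X^2 - \alpha X + c^3$, where here $\alpha = 2\bigl((2\nu-1)x^2 - \nu\bigr)/(x^2-\nu)$; the partial ramification locus is read off as the zeros of $\alpha^2 - 4c^3$ of odd multiplicity. I would verify that this locus is indeed $\left\{ \pm 1 \right\}$, so that the relevant quadratic extension is the constant extension in the variable $x$ realizing the two branch points, and hence that the quadratic twists are governed by a scalar. The twist by a class $[d] \in k^*/(k^*)^2$ is then implemented by replacing the defining quadratic extension $u^2 = (\text{something})$ by its twist $u^2 = d\,(\text{something})$, exactly as in Proposition \ref{prop:Qkbar}(3) where the class $[d]$ sends $y^2 = f$ to $y^2 = df$. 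Carrying the scalar $d$ through the same symmetric-function computation used in Proposition \ref{prop:322} (where $\alpha = f + 1/f$ for the descent of $w^3 = (x+u)/(x-u)$) then yields the claimed formula, with $\nu$ replaced by $d\nu$ in the denominator and in the corresponding place in the numerator.

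The cleanest route is probably to set up the upstairs diagram mirroring \eqref{eq:3322up}, namely the system
\begin{equation*}
  \left\{
    \begin{array}{ll}
      u^2 & = d\,\nu\,(\text{quadratic in } x) \\
      w^3 & = \text{rational function of } (x,u)
    \end{array} \right.
\end{equation*}
whose descent under the involution $(x,u,w) \mapsto (x, -u, 1/w)$ realizes the twisted partial ramification, and then to compute $\alpha = f + 1/f$ as a symmetric function of the two roots, just as in the proof of Proposition \ref{prop:322}. Twisting the quadratic by $d$ multiplies $u^2$ by $d$, which after forming $\alpha = f + 1/f$ propagates into the coefficient $\nu \mapsto d\nu$ in both numerator and denominator; the full-ramification locus $x^2 - d\nu$ deforms accordingly, while the partial ramification at $\left\{ \pm 1 \right\}$ is preserved because it is fixed by the bi-automorphism and invariant under the twist by Lemma \ref{lem:pcinv}. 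Finally I would confirm that distinct classes in $k^*/(k^*)^2$ give non-bi-isomorphic covers by appealing to Proposition \ref{prop:Qkbar}(2)–(3): distinct quadratic twists of the purely cubic closure are non-isomorphic, and by Lemma \ref{lem:pcisodesc} the cover is determined by its purely cubic closure together with the base change, so non-isomorphic closures force non-bi-isomorphic covers.

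The main obstacle I anticipate is bookkeeping rather than conceptual: one must check that the scalar $d$ really does enter only through $\nu \mapsto d\nu$ and does not spuriously rescale the variable $x$ or the numerator's leading coefficient, and that the resulting equation remains in the normalized form \eqref{eq:altfam} with the partial ramification pinned at $\left\{ \pm 1 \right\}$ rather than drifting. Relatedly, one should confirm that $k^*/(k^*)^2$ — and not some larger or smaller cohomology group — is the correct parametrizing set, which follows because the relevant bi-automorphism group contributing twists is the quadratic one attached to the partial ramification, as established in the analysis preceding this proposition, with the full-ramification structure already fixed by the choice of parameter $\nu$.
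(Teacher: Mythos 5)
Your final formula and parametrizing group are the ones in the statement, but the mechanism you propose for producing the bi-twists is not the right one and would not yield the displayed equation. You assert that the bi-twisting is implemented by replacing the purely cubic closure $u^2 = (\cdots)$ by its quadratic twist $u^2 = d\,(\cdots)$ over the \emph{fixed} partial branch locus $\left\{ \pm 1 \right\}$, and that ``the partial ramification at $\left\{\pm 1\right\}$ is preserved.'' Neither claim is correct. A direct computation on the claimed twisted equation gives $\alpha^2 - 4 = 16\,\nu(\nu-1)\,x^2(x^2-d)/(x^2-d\nu)^2$, so the partial ramification of the twist by $[d]$ sits at the roots of $x^2 - d$, not at $\left\{\pm 1\right\}$: the bi-twist moves \emph{both} branch loci. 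The reason is that the non-trivial bi-automorphism of \eqref{eq:altfam} (computed in Proposition \ref{prop:3322iso} and carried through the coordinate changes) is $(x,y)\mapsto(-x,y)$; it acts non-trivially on the base $\PP^1$ and swaps the two points of each branch pair, so a cocycle valued in it changes the Galois structure of the partial branch locus as well. Your appeal to Lemma \ref{lem:pcinv} is a misapplication, since that lemma concerns twists over $K$ with the base fixed, whereas a bi-twist changes the base coordinate. Moreover, your proposed construction (same branch loci, twisted closure $u^2 = c(x^2-1)$) runs into the descent criterion of Theorem \ref{thm:norms}: for general $c$ the places of full ramification need not split in the twisted closure, so the corresponding cover need not even exist, and when it does it is not a bi-twist of \eqref{eq:altfam} by this route.

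The intended argument, in line with Propositions \ref{prop:33} and \ref{prop:322}, is cohomological and short: by Proposition \ref{prop:3322iso} the bi-automorphism group over $\kbar$ is $\ZZ/2\ZZ$ with trivial Galois action, so the bi-twists are classified by $H^1(\Gamma_k,\ZZ/2\ZZ)\cong k^*/(k^*)^2$; the class $[d]$ is realized by the substitution $x\mapsto \sqrt{d}\,x$ (whose $\sigma$-coboundary is exactly $x\mapsto -x$), and the subsequent $k$-rational rescaling $x\mapsto x/d$ turns $x^2-\nu/d$ into $x^2-d\nu$, producing the displayed equation. You do gesture at this at the very end, when you say the parametrizing set comes from the quadratic bi-automorphism group, but the body of your argument computes with the wrong realization of that $\ZZ/2\ZZ$ and the wrong action, so the proof as written does not go through.
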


\begin{remark}
  As in the previous case, if the base field $k$ is finite, then given $\nu \in k$ with $\nu \not\in \left\{ \infty, 0, 1 \right\}$, we obtain two bi-twists. The defining equation of the non-trivial bi-twist depends on the choice of an element $d$ whose square root defines a quadratic extension $q$ over $k$.
\end{remark}

\begin{remark}
  The family of covers under consideration here (as well as those below for the case $(R, g_L) = (3^2 2^1, 1)$ in characteristic $2$) is related to three-torsion points on the Jacobian of genus-one curves. This is because the corresponding covers have full ramification at two distinct places, which we may assume, after twisting if needed, to be its pole and zero. Note that this is the only ramification possible for a function corresponding to a three-torsion element of a genus-one curve $Y$, except if $j (\Jac (Y)) = 0$, in which case the ramification type $(3^3)$ is also possible (for exactly one of the three-torsion points on the Jacobian of $Y$).
\end{remark}

This concludes our analysis when $\cha(k) \neq 2$.
So now we consider the complementary case $\cha (k) = 2$. The cases $\mathbf{(R, g_L) = (3^2, 0)}$ and $\mathbf{(R, g_L) = (3^3, 1)}$ remain unchanged.\\

$\mathbf{(R, g_L) = (3^1 2^1, 1)}:$ This is the characteristic-$2$ version of the case $(R, g_L) = (3^1 2^2, 1)$ above. A cover with the requested ramification is given by
\begin{equation}\label{eq:3121}
  y^3 = y + x .
\end{equation}
It is fully ramified over $\infty$ and partially ramified over $0$. By Proposition \ref{prop:notwists}, the covers with this ramification do not admit non-trivial automorphisms over $\kbar$.

\begin{proposition}\label{prop:32}
  The cover \eqref{eq:3121} has trivial bi-automorphism group over $\kbar$. In particular, it has no bi-twists.
\end{proposition}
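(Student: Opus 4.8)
The plan is to compute the geometric bi-automorphism group directly and then deduce the absence of bi-twists by the same cohomological argument already used in Proposition~\ref{prop:notwists}. First I would make the cover explicit: since $\cha(k) = 2$ the defining relation reads $x = y^3 + y$, so that $L = K(y) = k(y)$ is rational and the source curve $X$ is the projective line with coordinate $y$. The covering map $\pi \colon \PP^1_y \to \PP^1_x$, $y \mapsto y^3 + y$, is totally ramified over $x = \infty$, and since $y^3 + y = y(y+1)^2$ it is (wildly) ramified of index two over $x = 0$, the fibre there consisting of $y = 0$ with index one and $y = 1$ with index two. Thus the branch locus is $\{0, \infty\}$, and its two points carry distinct ramification profiles. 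Geometrically this is the reason the bi-automorphism group should collapse: the non-trivial bi-automorphism appearing in the characteristic-unequal-to-two case of Proposition~\ref{prop:322} swapped the two partial branch points, but in characteristic two these merge into the single wild branch point $x = 0$, so no such swap can survive.

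Next I would analyse a geometric bi-automorphism, that is (cf.~\eqref{eq:biiso}) a pair $(\phi, \psi)$ of automorphisms of $\PP^1_y$ and $\PP^1_x$ over $\kbar$ satisfying $\pi \circ \phi = \psi \circ \pi$. The base map $\psi$ must permute the branch locus while preserving ramification types; as the totally ramified point $\infty$ and the partially ramified point $0$ are of different type, $\psi$ fixes each of them, whence $\psi(x) = c x$ for some $c \in \kbar^*$. Since $\psi$ fixes $\infty$ and the fibre $\pi^{-1}(\infty)$ is the single point $y = \infty$, the lift $\phi$ fixes $y = \infty$ and is therefore affine, $\phi(y) = a y + b$ with $a \in \kbar^*$.

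It then remains to impose the relation $\pi(\phi(y)) = \psi(\pi(y))$, namely
\begin{equation*}
  (a y + b)^3 + (a y + b) = c\,(y^3 + y),
\end{equation*}
and to compare coefficients in characteristic $2$. The coefficient of $y^2$ gives $a^2 b = 0$, hence $b = 0$; the coefficients of $y^3$ and $y$ then give $a^3 = c$ and $a = c$, so that $a^3 = a$ and therefore $a = 1$ (as $a \neq 0$) and $c = 1$. Hence the only bi-automorphism is the identity, and the geometric bi-automorphism group is trivial. Finally, exactly as in Proposition~\ref{prop:notwists}, the bi-twists are classified by $H^1(\Gamma_k, -)$ of this group, which is a single point once the group is trivial; so there are no non-trivial bi-twists.

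The only point requiring genuine care is the ramification bookkeeping in the wild setting: one must verify that $\psi$ truly cannot interchange the two branch points, which hinges on the two distinct fibre types computed in the first step. Once that is settled, the remainder is the elementary coefficient comparison above, and the passage from a trivial automorphism group to the absence of bi-twists is purely formal.
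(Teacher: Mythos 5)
Your proof is correct, but it takes a different route from the paper. You exploit the fact that in this case the top curve is itself rational (via $x = y^3 + y = y(y+1)^2$), lift any bi-automorphism to an affine map $y \mapsto ay+b$ of $\PP^1_y$ fixing the unique point over $x = \infty$, and kill it by comparing coefficients in characteristic $2$; your preliminary step ruling out a swap of the two branch points is airtight, since the fibres over $\infty$ and $0$ have different cardinalities ($1$ versus $2$), independently of any wildness considerations. The paper instead only uses that the base map must be $x \mapsto \lambda x$ and then shows that such a map cannot induce an automorphism of the intermediate quadratic (resolvent) extension $z^2 = z + x^{-1} + 1$ unless $\lambda = 1$, by Artin--Schreier theory, reducing the remaining case $\lambda = 1$ to Proposition~\ref{prop:notwists}. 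Your argument is more elementary and entirely self-contained, but it relies on the genus of the source curve being zero so that the whole cover can be parametrized by one coordinate; the paper's method of passing to the intermediate degree-two extension is the one that is reused verbatim for the genus-one family in Proposition~\ref{prop:332iso}, where your parametrization trick is unavailable. The final cohomological step (trivial geometric bi-automorphism group implies a single class in $H^1$, hence no non-trivial bi-twists) is the same in both.
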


\begin{proof}
  Any bi-automorphism has to fix the distinguished points $\infty$ and $0$ and therefore sends $x$ to $\lambda x$. Moreover, this bi-automorphism would induce a bi-automorphism of the resolvent extension, which is defined by $z^2 = z + x^{-2} + 1$ or alternatively by $z^2 = z + x^{-1} + 1$. But this is impossible by Artin--Schreier theory. Indeed, the existence of such a bi-automorphism implies an isomorphism of the aforementioned extension with $z^2 = z + (\lambda x)^{-1} + 1$, which is impossible unless $\lambda = 1$ since otherwise $(1 - \lambda) x^{-1}$ is not of the form $\alpha^2 + \alpha$.
\end{proof}

$\mathbf{(R, g_L) = (3^2 2^1, 1)}:$ This is the characteristic-$2$ version of the case $(R, g_L) = (3^2\, 2^2, 1)$ above. We first consider these covers over the algebraic closure $\kbar$. We may then assume that the purely cubic closure is defined by $u^2 + u = x$, and can put the place of partial ramification at $\infty$ and a place of full ramification at $0$. This time we cannot scale the other place of full ramification, so we do not fix it. This gives rise to the versal family of covers defined by descending
\begin{equation}\label{eq:332up}
  \left\{
    \begin{array}{ll}
      u^2 + u & = x \\
      w^3 & = \frac{u + 1}{u} \frac{u + \lambda + 1}{u + \lambda}
    \end{array} \right.
\end{equation}
for $\lambda \not\in \left\{ \infty, 0, 1 \right\}$. The resulting covers are partially ramified over $\infty$ and fully ramified over $\left\{ 0, \lambda^2 + \lambda \right\}$. They descend to
\begin{equation}\label{eq:332down}
  y^3 = y + \frac{\lambda^2 + 1}{x^2 + (\lambda^2 + \lambda) x} .
\end{equation}

\begin{proposition}\label{prop:332iso}
  Two covers obtained from \eqref{eq:332down} for different parameter values~$\lambda$ are all distinct up to both isomorphism and bi-isomorphism. They have trivial automorphism group and bi-automorphism group $\ZZ / 2 \ZZ$ over $\kbar$.
\end{proposition}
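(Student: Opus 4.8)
The plan is to follow the strategy of Proposition \ref{prop:3322iso}, transporting the analysis to the Artin--Schreier setting of characteristic $2$. By Lemma \ref{lem:pcisodesc}, both isomorphisms and (after accounting for the base automorphism) bi-isomorphisms between the descended covers \eqref{eq:332down} are detected on the level of the covers \eqref{eq:332up} over the common purely cubic closure $K' = k(u)$, where $u^2 + u = x$ and the descent involution is $\sigma : (u,w) \mapsto (u+1, 1/w)$. The first reduction I would make is to determine which automorphisms $\phi$ of $\PP^1_u$ can cover a base automorphism: such a $\phi$ must normalize $\langle \sigma \rangle$, hence commute with the translation $\sigma$, and a direct computation in $\PGL_2$ shows that the centralizer of a non-trivial translation in characteristic $2$ consists precisely of the translations $u \mapsto u + b$. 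This is the essential structural difference from the case $\cha(k) \neq 2$, where the closure involution was $u \mapsto -u$ and its centralizer contained scalings and the inversion.

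Having restricted to translations, I would next record the branch data: the defining function $g_\lambda = \tfrac{u+1}{u}\cdot\tfrac{u+\lambda+1}{u+\lambda}$ has branch locus $\{0, 1, \lambda, \lambda+1\}$ in $u$, partitioned by $\sigma$ into the orbits $\{0,1\}$ and $\{\lambda, \lambda+1\}$ lying over the full-ramification places $x = 0$ and $x = \lambda^2 + \lambda$. A translation $u \mapsto u + b$ preserves this locus exactly when $b$ lies in the Klein four-group $\{0, 1, \lambda, \lambda+1\}$; checking the action on $g_\lambda$ modulo cubes then shows that each such $b$ lifts to a bi-automorphism, with $b \in \{0,1\}$ descending to the identity on $K$ and $b \in \{\lambda, \lambda+1\}$ descending to the involution $x \mapsto x + (\lambda^2+\lambda)$ that swaps the two full-ramification places. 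Since $b = 1$ realizes the descent involution $\sigma$ itself and hence acts trivially on $L$, the automorphism group is trivial (as already guaranteed by Proposition \ref{prop:notwists} since $S = \{\infty\} \neq \emptyset$), while the bi-automorphism group is the quotient of $\{0,1,\lambda,\lambda+1\}$ by the subgroup $\{0,1\}$ acting trivially on the base, namely $\ZZ/2\ZZ$.

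For the claim that distinct parameter values give non-isomorphic and non-bi-isomorphic covers I would argue in two stages. A bi-isomorphism between the covers for $\lambda$ and $\lambda'$ induces a base translation $x \mapsto x + c$ fixing $\infty$ and carrying $\{0, \lambda^2+\lambda\}$ to $\{0, \lambda'^2+\lambda'\}$, which forces $\lambda'^2 + \lambda' = \lambda^2 + \lambda$, i.e. $\lambda' \in \{\lambda, \lambda+1\}$. The main obstacle is the residual case $\lambda' = \lambda + 1$, where the branch loci downstairs coincide and branch-locus combinatorics alone cannot separate the covers; here the sign refinement is indispensable. Passing from $\lambda$ to $\lambda+1$ inverts the second factor $\tfrac{u+\lambda+1}{u+\lambda}$ of $g_\lambda$ while fixing the first, thereby reversing the orientation $\pP^+ \leftrightarrow \pP^-$ on the orbit $\{\lambda, \lambda+1\}$ and leaving it unchanged on $\{0,1\}$, so by Corollary \ref{cor:allwithclos} (with $t = 2$) the two parameters realize the two distinct $\kbar$-isomorphism classes attached to this branch locus. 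To conclude I would verify that for each admissible translation $b \in \{0,1,\lambda,\lambda+1\}$ the pullback of $g_{\lambda+1}$ equals neither $g_\lambda$ nor $g_\lambda^{-1}$ modulo cubes, so no bi-isomorphism exists; the only place-swapping bi-automorphism sends a sign class $(\epsilon_1,\epsilon_2)$ to $(\epsilon_2,\epsilon_1)$, which agrees with the original only up to global negation and hence cannot interchange the two classes. This separates all parameter values up to both isomorphism and bi-isomorphism.
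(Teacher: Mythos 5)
Your proposal is correct and follows the same overall route as the paper's proof: reduce to the covers \eqref{eq:332up} over the common purely cubic closure via Lemma \ref{lem:pcisodesc}, determine which base maps can underlie a (bi-)isomorphism, and then compare branch data and Kummer classes. Two sub-steps are carried out differently. First, where the paper rules out scalings $x \mapsto cx$ by noting (as in Proposition \ref{prop:32}) that they do not lift to the Artin--Schreier closure $u^2 + u = x$, you instead compute the centralizer of the deck transformation $u \mapsto u + 1$ in $\PGL_2$ and conclude that only translations upstairs can cover a base map; these are two proofs of the same fact, and your version makes the structural contrast with the characteristic-not-$2$ case $(R, g_L) = (3^2\, 2^2, 1)$ (where the centralizer of $u \mapsto -u$ also contains scalings and the inversion) more transparent. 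Second, and more substantively, you isolate the residual case $\lambda' = \lambda + 1$, where $\lambda'^2 + \lambda' = \lambda^2 + \lambda$ so that the branch loci downstairs coincide and neither the branch-locus combinatorics nor the non-liftability of scalings separates the covers; your appeal to the sign classes of Corollary \ref{cor:allwithclos} with $t = 2$ (the two parameters realize the two distinct classes modulo global negation, and the place-swapping bi-automorphism preserves each class) is exactly the needed ingredient, and I verified the underlying computation that no admissible translation carries $g_{\lambda+1}$ to $g_\lambda^{\pm 1}$ modulo cubes. The paper leaves this case implicit under the phrase ``the first part of Proposition \ref{prop:3322iso} carries over unchanged,'' so on this point your write-up is the more complete of the two.
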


\begin{proof}
  The first part of Proposition \ref{prop:3322iso} carries over unchanged, except that now we get the possibilities $u \mapsto u$ and $u \mapsto u + 1$. As for the bi-automorphisms of \eqref{eq:332down}, the only non-trivial morphism respecting the branch locus downstairs is $x \mapsto x + \lambda^2 + \lambda$. Up to the descent involution $(x, u, w) \mapsto (x, u + 1, 1/w)$, this lifts to a unique bi-automorphism $(x, u, w) \mapsto (x + \lambda^2 + \lambda, u + \lambda, w)$ of \eqref{eq:332up}, which commutes with the descent involution and therefore defines the unique non-trivial bi-automorphism of \eqref{eq:332down}.

  Because of the existence of this bi-automorphism, any non-trivial bi-isomorphisms between different covers \eqref{eq:332up} would come from a scaling $x \mapsto c x$. As in the proof of Proposition \ref{prop:32}, none of these with $c \neq 1$ lifts to give an automorphism of the common purely cubic closure, and therefore only trivial isomorphisms exist.
\end{proof}

We conclude that $\lambda \in k \setminus \left\{ \infty, 0, 1 \right\}$ parametrizes the bi-automorphism classes of covers \eqref{eq:332down}. In fact, scaling $x$ by $\lambda^2 + \lambda$ gives the simpler family
\begin{equation*}
  y^3 = y + \frac{1}{\lambda^2 (x^2 + x)} .
\end{equation*}
Note the inseparability phenomenon involving $\lambda$. We may replace $\lambda^2$ by $1/\lambda$ since we are over a perfect field, yielding the family
\begin{equation}\label{eq:332desc}
  y^3 = y + \frac{\lambda}{(x^2 + x)} .
\end{equation}
This family has partial ramification at $\left\{ \infty \right\}$ and full ramification at $\left\{ 0, 1 \right\}$. The bi-twists of the covers in this family are quickly identified, as follows.

\begin{proposition}\label{prop:332}
  Given $\lambda \in k$ with $\lambda \not\in \left\{ \infty, 0, 1 \right\}$, the bi-twists of \eqref{eq:332desc} are parametrized by the quadratic extensions of $k$, or alternatively by $k / H$ where as before $H$ is the additive subgroup of $k$ generated by the elements $\gamma^2 + \gamma$ for $\gamma \in k$, and with the class of $a \in k$ corresponding to the equation
\begin{equation*}
    y^3 = y + \frac{\lambda}{(x^2 + x + a)} .
\end{equation*}
\end{proposition}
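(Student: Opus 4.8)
The plan is to identify the set of bi-twists with a Galois cohomology set and then match that set with the displayed family. First I would invoke Proposition~\ref{prop:332iso}, which shows that the bi-automorphism group of \eqref{eq:332desc} over $\kbar$ is $\ZZ / 2 \ZZ$, generated by the involution that fixes the partial-ramification branch point $\infty$ and interchanges the two full-ramification branch points $0$ and $1$. On the base $\PP^1$ this involution is the map $x \mapsto x + 1$, which is already defined over the prime field $\FF_2 \subseteq k$; hence $\Gamma_k$ acts trivially on the bi-automorphism group. Consequently, within the bi-isomorphism class labelled by the fixed parameter $\lambda$, the bi-twists of \eqref{eq:332desc} are classified by $H^1 (\Gamma_k, \ZZ / 2 \ZZ)$.

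Next I would make this cohomology group concrete. In characteristic $2$, Artin--Schreier theory identifies $H^1 (\Gamma_k, \ZZ / 2 \ZZ) = \Hom (\Gamma_k, \ZZ / 2 \ZZ)$ with $k / H$, where $H$ is the additive subgroup of $k$ generated by the elements $\gamma^2 + \gamma$, exactly as in Proposition~\ref{prop:Qbichar2}. This set parametrizes the separable quadratic extensions of $k$ together with the split algebra $k \times k$, and the class of $a \in k$ corresponds to the quadratic étale algebra $k[t] / (t^2 + t + a)$. Geometrically this is the Galois structure that a bi-twist imposes on the unordered pair of full-ramification branch points, the rest of the configuration (the rational point $\infty$ and the cubic fibre) being unaffected.

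To produce the asserted family I would, following the descent procedure of Theorem~\ref{thm:unifeq}, move the full ramification from the roots of $x^2 + x$ to the roots of $x^2 + x + a$ while keeping the partial ramification at $\infty$, obtaining $y^3 = y + \lambda / (x^2 + x + a)$. To confirm that this is the bi-twist attached to $a$, I would write $a = \rho^2 + \rho$ over $\kbar$ and check that the base-change $x \mapsto x + \rho$, which is defined over $k (\rho)$, transforms $x^2 + x + a$ into $x^2 + x$ and hence carries this equation to \eqref{eq:332desc}. This exhibits the two covers as bi-twists whose base-change isomorphism is defined over exactly the quadratic extension $k (\rho)$ cut out by $t^2 + t + a$, so the cocycle extracted from it is the Artin--Schreier class of $a$. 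Since distinct classes in $k / H$ yield branch loci with distinct Galois structures, the covers they define are pairwise non-bi-isomorphic, and as the family realizes every class, the assignment $a \mapsto \bigl( y^3 = y + \lambda / (x^2 + x + a) \bigr)$ is the desired bijection.

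The main obstacle I anticipate is the bookkeeping in this last step: one must verify that the cohomological twisting acts by altering only the Galois structure of the full-ramification pair, and that the cocycle obtained from the base-change $x \mapsto x + \rho$ is precisely the class of $a$ rather than a shifted or complementary class. By contrast, the triviality of the Galois action on the bi-automorphism group, equivalently the $k$-rationality of the swap $x \mapsto x + 1$, is immediate here, so the genuine work lies entirely in matching the explicit family with the abstract $H^1 (\Gamma_k, \ZZ / 2 \ZZ) \cong k / H$.
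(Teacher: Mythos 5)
Your proposal is correct and follows essentially the same route as the paper: identify the bi-twists with $H^1(\Gamma_k, \ZZ/2\ZZ) \cong k/H$ via Artin--Schreier theory (using that the unique non-trivial bi-automorphism acts on the base by $x \mapsto x+1$, so the Galois action on the bi-automorphism group is trivial), and realize the class of $a$ by the substitution $x \mapsto x + \rho$ with $\rho^2 + \rho = a$. Your write-up merely spells out the cocycle bookkeeping that the paper leaves implicit.
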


\begin{proof}
  A cocycle in $H^1 (\Gamma_k, \ZZ / 2 \ZZ) = \Hom (\Gamma_k, \ZZ / 2 \ZZ)$ is determined by the quadratic extension over which it splits. Using the Artin--Schreier exact sequence $0 \to \GGa \to \GGa \to \ZZ / 2 \ZZ \to 0$, we see that applying this cocycle comes down to substituting $x + \lambda$ for $x$, where $\lambda$ satisfies $\lambda^2 + \lambda = a$. This yields the given equation.
\end{proof}

\begin{remark}
  As in the previous case, if the base field $k$ is finite, then given $\lambda \in k$ with $\lambda \not\in \left\{ \infty, 0, 1 \right\}$, we obtain two bi-twists. The defining equation of the non-trivial bi-twist depends on the choice of an element $a$ for which the Artin--Schreier polynomial $t^2 + t + a$ defines $q$ over $k$.
\end{remark}

\begin{remark}
  The following table summarizes the number of bi-isomorphism classes when $k$ is a finite field, and refers to the relevant propositions to determine defining equations. The first pair of entries occur in general characteristic, the second pair in characteristic not equal to $2$, and the final pair in characteristic $2$.
  
  \renewcommand{\arraystretch}{1.5}
  \begin{center}
    \begin{tabular}{|l|l|l|}
      \hline
      $(R, g_L)$ & Number of bi-iso classes & Proposition    \\
      \hline
      $(3^2, 0)$     & $2$         & \ref{prop:33}           \\
      $(3^3, 1)$     & $3$ or $9$  & \ref{thm:biisog1}      \\
      \hline
      $(3^1\, 2^2, 0)$ & $2$         & \ref{prop:322}          \\
      $(3^2\, 2^2, 1)$ & $2 (q - 2)$ & \ref{prop:3322}         \\
      \hline
      $(3^1\, 2^1, 0)$ & $1$         & \ref{prop:32}           \\
      $(3^2\, 2^1, 1)$ & $2 (q - 2)$ & \ref{prop:332}          \\
      \hline
    \end{tabular}
  \end{center}
  Theorems \ref{thm:biisog0} and \ref{thm:biisog1} show that among these classes, the impurely cubic ones are exactly $y^3 = x$ and the cases for $(R, g_L) = (3^3, 1)$. Similarly, the only Galois cases can be the first two with $(R, g_L) = (3^2, 0)$ and $(R, g_L) = (3^3, 1)$. The former always admits a unique Galois bi-twist, and in the latter case either all or none of the extensions is Galois.
\end{remark}

\begin{remark}
  It may be interesting to use Oort--Sekiguchi--Suwa theory \cite{OSS} to study the degeneration of the preceding covers to characteristic $2$. Similarly, some of the above families degenerate to others. For example, take $\cha (k) \neq 2$ and consider
\begin{equation*}
    y^3 = 3 c y + x .
\end{equation*}
  For $c$ non-zero we get $(R, g_L) = (3^1\, 2^2, 0)$, whereas $c = 0$ gives $(R, g_L) = (3^2, 0)$. Similarly, the family of equations
\begin{equation*}
    y^3 = 3 y + 2 \frac{2 x^2 - (\lambda - 1)^2 x + 2 \lambda^2}{(\lambda + 1)^2 x}
\end{equation*}
  has $(R, g_L) = (3^2\, 2^2, 0)$ for $\lambda \neq \pm 1$, whereas $\lambda = 1$ gives $(R, g_L) = (3^2, 0)$.
\end{remark}

\section{Parshin Fibrations}\label{sec:parshin}

In this section we go beyond the constructions above by no longer assuming that the base field $K$ is of genus zero. Let $X$ be a smooth curve over an algebraically closed field $k$ whose characteristic does not divide $6$.

\begin{definition}
  A \emph{Parshin cover} is a degree-three cover $Y \to X$ that ramifies fully over one point of $X$ and that has no further branch points. A \emph{Parshin fibration} is a family of Parshin covers of $X$, such that the collection of the unique branch points of its respective members coincides with the points of $X$.
\end{definition}

\begin{proposition}\label{prop:parshin-prel}
  \begin{enumerate}
    \item The projective line does not admit a Parshin fibration.
    \item Let $\phi : Y \to X$ be a Parshin cover. Then the monodromy group of $\phi$ is isomorphic to $S_3$.
  \end{enumerate}
\end{proposition}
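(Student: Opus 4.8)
\textbf{Proof proposal for Proposition \ref{prop:parshin-prel}.}

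My plan is to prove the two parts separately, treating part (1) as a genus obstruction coming from Riemann--Hurwitz and part (2) as a monodromy computation coming from the structure of the fundamental group.

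For part (1), the plan is to use the Riemann--Hurwitz formula to show that a Parshin cover forces a specific relation between the genus $g_X$ of the base and the genus $g_Y$ of the cover, and that this relation is incompatible with $X = \PP^1$. A degree-three cover $Y \to \PP^1$ that is fully ramified over a single point and unramified elsewhere contributes a total ramification of $\deg(\phi) - 1 = 2$ at that single point. Riemann--Hurwitz then reads $2 g_Y - 2 = 3 (2 \cdot 0 - 2) + 2 = -4$, which gives $g_Y = -1$; this is absurd, so no such cover of $\PP^1$ exists at all. A fortiori there can be no Parshin fibration of $\PP^1$. (One should note that the same computation over a general base yields $2 g_Y - 2 = 3(2 g_X - 2) + 2$, i.e.\ $g_Y = 3 g_X - 1$, which confirms that Parshin covers exist only for $g_X \geq 1$ and explains why the later sections focus on the genus-one and genus-two cases.)

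For part (2), the plan is to compute the monodromy group as the image of the fundamental group of the punctured base. Removing the unique branch point $p$ from $X$, the cover $\phi$ becomes an unramified degree-three cover of $X \setminus \{p\}$, and is therefore classified by the monodromy representation $\rho : \pi_1(X \setminus \{p\}) \to S_3$ whose image is the monodromy group $G$. Since a degree-three connected cover has transitive monodromy, $G$ is a transitive subgroup of $S_3$, hence either $C_3$ or $S_3$. To rule out $C_3$, I would use the local monodromy at the branch point: full ramification over $p$ means that the monodromy around a small loop encircling $p$ is a $3$-cycle, while the product of the local monodromies around all branch points must be trivial in $G$. With only one branch point this forces the single $3$-cycle to be trivial, a contradiction, \emph{unless} the representation is not determined by $p$ alone --- that is, unless $X$ itself has nontrivial topology contributing generators to $\pi_1$. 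This is precisely where $g_X \geq 1$ matters: the extra generators coming from the genus of $X$ allow the product relation to be satisfied by a $3$-cycle around $p$, and since a $3$-cycle together with any further nontrivial permutations that make the product relation hold must generate a group strictly larger than $C_3$ (as $C_3$ contains no transpositions and the relation cannot close up using only $3$-cycles and the identity when the loop around $p$ is a nontrivial $3$-cycle paired against commutators), the image is forced to be all of $S_3$.

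The main obstacle I anticipate is the final step of part (2): cleanly arguing that the image cannot be $C_3$. The subtlety is that $C_3$ is abelian, so the relation $\prod [\alpha_i, \beta_i] \cdot \gamma = 1$ (where $\gamma$ is the loop around $p$ and the $\alpha_i, \beta_i$ are the standard genus generators) degenerates: the commutators all die in $C_3$, forcing $\gamma \mapsto 1$, which contradicts $\gamma$ being a $3$-cycle. Hence the image is nonabelian, and the only nonabelian transitive subgroup of $S_3$ is $S_3$ itself. I would make this rigorous by passing to the profinite (or tame) fundamental group in the arithmetic setting, where the same surface-group relation holds, so that the conclusion $G = S_3$ follows from the impossibility of realizing a nontrivial local monodromy in an abelian quotient when the only branching is at one point. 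An alternative, perhaps cleaner, route is to invoke the resolvent extension: by Corollary \ref{cor:purorgal} and the general theory, $G = C_3$ would force $Y/X$ to be Galois (cyclic), hence everywhere unramified except at $p$ with a cyclic group --- but a cyclic degree-three cover branched at a single point with local monodromy a $3$-cycle violates the vanishing of the total local monodromy in the abelianization, giving the same contradiction.
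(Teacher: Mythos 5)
Your proposal is correct, but it splits from the paper on part (1) and coincides with it on part (2). For (1), the paper does not use Riemann--Hurwitz at all: it observes that $\PP^1$ minus one point has trivial fundamental group (citing SGA1 for the algebraic setting), hence admits no nontrivial connected unramified covers, so a fortiori no Parshin covers. Your genus count $2g_Y - 2 = 3(2\cdot 0 - 2) + 2 = -4$ is a perfectly valid alternative (the ramification is tame since the characteristic does not divide $6$, so the different has degree $e-1=2$), and it has the small bonus of producing the formula $g_Y = 3g_X - 1$ that the paper records separately right after the proposition; the paper's route is the one that generalizes to ``no unramified covers at all,'' which is slightly stronger than what is needed. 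For (2), your final argument is exactly the paper's: present $\pi_1(X\setminus\{P\})$ as $\langle \alpha_1,\dots,\beta_g,\gamma \mid [\alpha_1,\beta_1]\cdots[\alpha_g,\beta_g]\gamma=1\rangle$, note the transitive image is $A_3$ or $S_3$, and kill $A_3$ because it is abelian, so the commutators die and $\gamma$ would map to the identity rather than a $3$-cycle. I would advise cutting the middle passage of your part (2) --- the claim that ``the product of the local monodromies around all branch points must be trivial'' is false for $g_X \geq 1$ (it equals the inverse of a product of commutators), and the sentence about a $3$-cycle ``together with further nontrivial permutations'' generating something larger than $C_3$ is not a correct argument as stated. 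The clean abelianization argument you give at the end is all that is needed, and it is the paper's proof.
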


\begin{proof}
  We use the theory of the fundamental group, as we may by \cite[Exposé 13, Corollaire 2.12]{SGA1}. The projective line minus one point has trivial fundamental group, and therefore admits no unramified covers, proving (1). Now let $\phi : Y \to X$ be a Parshin cover of a curve of genus $g > 0$, with full ramification over the point $P$ say. By \emph{loc.\ cit.}, the fundamental group of $X \setminus \left\{ P \right\}$ is
  \begin{equation*}
    \pi_1 := \langle \alpha_1 , \dots, \alpha_g, \beta_1 , \dots, \beta_g, \gamma \; \mid \;  [ \alpha_1, \beta_1 ] \cdots [ \alpha_g, \beta_g ] \gamma = 1 \rangle ,
  \end{equation*}
  and giving a degree-three connected and unramified cover of $X \setminus \left\{ P \right\}$ is equivalent to giving a map $\pi_1 \to S_3$ whose image is transitive. This image, which is the monodromy group of the corresponding cover, is therefore either $A_3$ or $S_3$. Now the ramification type of $P$ in the prolongation $\phi : Y \to X$ of this unramified cover is nothing but the cycle type of the image of $\gamma$ in $S_3$. If the monodromy group were~$A_3$, then all commutators would vanish, so that said cycle type would be trivial. However, we have imposed in our definition of a Parshin cover that this image be a three-cycle. We conclude that indeed the monodromy group of $\phi$ is isomorphic to $S_3$, proving (2).
\end{proof}

Let $\phi : Y \to X$ be a Parshin cover. The Riemann--Hurwitz formula implies that~$Y$ is of genus $3 g (X) - 1$. By Proposition \ref{prop:parshin-prel}, the Galois closure $Z \to X$ of $\phi$ has two ramification points of index three over the branch point of $\phi$, and therefore it is of genus $6 g (X) - 3 = 2 g (Y) - 1$.

We first consider the case where $g (X) = 1$, so that $g (Y) = 2$ and $g (Z) = 3$. The cover $Z \to Y$ is étale, so that $Z$ is hyperelliptic, with automorphism group containing $S_3$. We use the family for this automorphism stratum from \cite{LR11} to calculate the corresponding quotients.

\begin{proposition}\label{prop:par-g1}
  Let
\begin{equation*}
    Z : t^2 = s (s^6 - \lambda s^3 + 1) .
\end{equation*}
  Define $\rho (s, t) = (\zeta_3 s, t)$ and $\sigma (s, t) = (1/s, t/s^4)$. Let
\begin{equation*}
    Y : v^2 = (u^2 - 4) (u^3 - 3 u - \lambda)
\end{equation*}
  and
\begin{equation*}
    X : y^2 = (x^2 - 4) (x - \lambda) .
\end{equation*}
  Define the maps $\psi : Z \to Y$ and $\phi : Y \to X$ by
\begin{equation*}
    (u, v) = \psi (s, t) = (s + s^{-1}, t (1 - s^{-4}) (s + s^{-1})^{-1})
\end{equation*}
  and
\begin{equation*}
    (x, y) = \phi (u, v) = (u^3 - 3 u, v (u^2 - 1)) = (s^3 + s^{-3}, s t (1 - s^{-6})) .
\end{equation*}
  Then $\psi$ is the degree-two quotient morphism of $Z$ by $\sigma$, and $\phi \psi$ is the degree-six quotient of $Z$ by the automorphism group generated by $\rho$ and $\sigma$. Moreover, $\phi$ ramifies triply above the unique point of $X$ at infinity and is unramified elsewhere. It thus furnishes a Parshin cover for $X$.
\end{proposition}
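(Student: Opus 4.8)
The plan is to descend everything from the $s$-line, on which $S_3$ acts through the order-three map $a : s \mapsto \zeta_3 s$ and the involution $b : s \mapsto s^{-1}$, with $bab = a^{-1}$. Its invariants are $u = s + s^{-1}$, fixed by $b$, and $x = s^3 + s^{-3} = u^3 - 3u$, fixed by all of $\langle a, b\rangle \cong S_3$; thus $\PP^1_s \to \PP^1_u \to \PP^1_x$ are the successive quotients by $\langle b\rangle$ and by $\langle a, b\rangle$. The curves $Z$, $Y$, $X$ are the degree-two covers of these three lines. First I would check that $\rho$ and $\sigma$, suitably lifted to $Z$ (their action on $t$ being pinned down by requiring that they preserve $t^2 = s(s^6-\lambda s^3+1)$ and fix the invariants used below), are automorphisms of orders three and two with $\sigma\rho\sigma = \rho^{-1}$, so that $\langle\rho,\sigma\rangle \cong S_3$. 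Throughout I assume $\lambda \neq \pm 2$, which is exactly the condition that the three defining polynomials are separable, so that $Z$, $Y$, $X$ are smooth of genera $3$, $2$, $1$.

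For the two quotient statements I would verify invariance, degree, and that the maps land on the stated curves. Using $u^2 - 4 = (s - s^{-1})^2$ and $u^3 - 3u - \lambda = s^{-3}(s^6 - \lambda s^3 + 1)$, a substitution shows that $(u,v) = \psi(s,t)$ satisfies $v^2 = (u^2-4)(u^3-3u-\lambda)$, so $\psi$ maps $Z$ to $Y$; since it sits over the degree-two map $\PP^1_s \to \PP^1_u$ it has degree two, and as $u$ and $v$ are $\sigma$-invariant with $\sigma$ an involution, $\psi$ is the quotient $Z \to Z/\langle\sigma\rangle \cong Y$. Similarly, from $x^2 - 4 = (s^3 - s^{-3})^2$ and $x - \lambda = s^{-3}(s^6-\lambda s^3+1)$ one checks that $(x,y)=\phi\psi(s,t) = (s^3+s^{-3},\, st(1-s^{-6}))$ lies on $X$, is invariant under all of $\langle\rho,\sigma\rangle$, and has degree six, so $\phi\psi$ is the quotient by $\langle\rho,\sigma\rangle$ and $\phi$ has degree three. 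In particular $Z \to X$ is Galois with group $S_3$; as $\langle\sigma\rangle$ has trivial normal core in $S_3$, it is the Galois closure of $\phi$, in agreement with Proposition~\ref{prop:parshin-prel}(2).

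The crux is the ramification of the degree-three map $\phi : Y \to X$. Riemann--Hurwitz gives $2g_Y - 2 = 3(2g_X - 2) + \deg R_\phi$, that is $\deg R_\phi = 2$, so there is room for total ramification over at most one point. I would place it over $x = \infty$: since $(x^2-4)(x-\lambda)$ and $(u^2-4)(u^3-3u-\lambda)$ have odd degrees $3$ and $5$, there is a single point $\infty_X \in X$ over $x = \infty$ at which $X \to \PP^1_x$ ramifies, and a single point $\infty_Y \in Y$ over $u = \infty$. The composite $Y \to \PP^1_u \to \PP^1_x$ ramifies to order $2 \cdot 3 = 6$ at $\infty_Y$ (index two from the double cover $Y \to \PP^1_u$, index three from the total ramification of $u \mapsto u^3 - 3u$ at infinity); as the same composite also factors through $\phi$ followed by the degree-two map $X \to \PP^1_x$, which ramifies to order two at $\infty_X$, I conclude $e_\phi(\infty_Y) = 3$. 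This one point already contributes $3 - 1 = 2 = \deg R_\phi$, so $\phi$ is unramified everywhere else, and is therefore a degree-three cover fully ramified over exactly one point---a Parshin cover.

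The main obstacle is precisely this ramification count: one must be sure that the degree-two structure and the degree-three base map $u \mapsto u^3 - 3u$ conspire to concentrate all ramification of $\phi$ at infinity. I would guard against an error by spot-checking a finite fibre: over $x = 2$ one has $u^3 - 3u - 2 = (u-2)(u+1)^2$, and the ramification at $u = 2$ and at $u = -1$ each cancel against the ramification of $X \to \PP^1_x$ over $x = 2$, leaving $\phi$ unramified there, consistent with Riemann--Hurwitz. A second point demanding care is the choice of the lifts $\rho,\sigma$ to $Z$: the involution over $b$ must be the one fixing $v$ (equivalently, yielding the genus-two quotient $Y$ rather than the genus-one curve $w^2 = u^3-3u-\lambda$), and likewise $\rho$ must be scaled on $t$ so that $y$ is fixed; with the wrong choices the quotient identifications break down.
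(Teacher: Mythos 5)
Your proof is correct and follows the same route as the paper's, which simply asserts that the required well-definedness, invariance, and degree checks are direct calculations; you carry these out explicitly and also supply the ramification argument (multiplicativity of ramification indices in the tower over $\PP^1_x$ combined with Riemann--Hurwitz to exclude further ramification) that the paper's proof leaves implicit. Your closing caveat about the choice of lifts is well taken and in fact necessary: as literally printed, $\rho(s,t)=(\zeta_3 s, t)$ does not preserve $Z$ and $\sigma(s,t)=(1/s, t/s^4)$ has four fixed points and so yields the genus-one quotient $w^2=u^3-3u-\lambda$ rather than $Y$; the intended automorphisms are $(\zeta_3 s, \zeta_3^2 t)$ and $(1/s, -t/s^4)$, exactly as your requirement that they preserve the defining equation and fix $u$, $v$, $x$, $y$ forces.
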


\begin{proof}
  It suffices to verify that the indicated maps are well-defined, invariant under the respective automorphisms, and of the indicated degree, which are direct calculations.
\end{proof}

Since the curves from Proposition \ref{prop:par-g1} range through all $\kbar$-isomorphism classes as $\lambda$ varies, and since we may use translations to put the branch point at infinity, we have found a Parshin fibration over $\kbar$ for the curve $X$ of genus one. There is a similar way to deal with Parshin covers of hyperelliptic curves that ramify over a Weierstrass point:

\begin{proposition}\label{prop:par-g2}
  Let
\begin{equation}\label{eq:par-g2}
    X : y^2 = f (x) = (x^2 - 4) g (x),
\end{equation}
  with $g$ of odd degree, be a hyperelliptic curve. Then the curve $Y$ obtained from the system of equations consisting of \eqref{eq:par-g2} and $z^3 = 3 z - x$, or alternatively the curve defined by
\begin{equation*}
    Y : w^2 = (z^2 - 4) g (z^3 - 3 z),
\end{equation*}
  defines a Parshin cover of $X$ that ramifies over the Weierstrass point of $X$ at infinity. Explicitly, we have
\begin{equation*}
    \begin{split}
      \phi : Y & \to X \\
      (z, w) & \to (z^3 - 3 z, w (z^2 - 1)) .
    \end{split}
\end{equation*}
\end{proposition}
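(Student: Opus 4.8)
The plan is to recognize $\phi$ as the map induced on the $x$-coordinate by the degree-three morphism $\pi \colon \PP^1 \to \PP^1$, $z \mapsto z^3 - 3z$, so that $\phi \colon Y \to X$ is the fibre product of the hyperelliptic projection $X \to \PP^1$, $(x,y) \mapsto x$, with $\pi$. First I would check that the two descriptions of $Y$ agree. Substituting $x = z^3 - 3z$ into $y^2 = (x^2 - 4) g(x)$ and using the factorization
\begin{equation*}
  (z^3 - 3z)^2 - 4 = (z^3 - 3z - 2)(z^3 - 3z + 2) = (z+1)^2(z-2)\,(z-1)^2(z+2) = (z^2 - 4)(z^2 - 1)^2 ,
\end{equation*}
the substitution $w = y/(z^2 - 1)$ yields $w^2 = (z^2 - 4) g(z^3 - 3z)$, and the assignment $(z,w) \mapsto (z^3 - 3z,\, w(z^2 - 1))$ is precisely $\phi$. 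The sign discrepancy with the system $z^3 = 3z - x$ in the statement is harmless: since $z^3 - 3z$ is odd, the substitution $z \mapsto -z$ identifies the two curves, so I may work throughout with $x = z^3 - 3z$.

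Next I would record the ramification of $\pi$. Its derivative $3(z^2 - 1)$ vanishes at $z = \pm 1$, with $\pi(-1) = 2$ and $\pi(1) = -2$; the factorizations above show that the fibre of $\pi$ over $x = 2$ is $\{z = -1 \text{ (double)},\, z = 2\}$ and over $x = -2$ is $\{z = 1 \text{ (double)},\, z = -2\}$, while $\pi$ is totally ramified over $x = \infty$. Hence every branch point of $\phi$ lies over one of $\{2, -2, \infty\}$, and it remains to analyse $\phi$ there. Over the Weierstrass point $(2,0)$ of $X$ (where $x^2 - 4 = 0$), the preimages on $Y$ satisfy $z^3 - 3z = 2$ and $w(z^2 - 1) = 0$: the simple point $z = 2$ forces $w = 0$, giving $(2,0)$, while the point $z = -1$ has $z^2 - 1 = 0$ and $w^2 = (1-4)g(2) = -3g(2)$, which splits into the two distinct points $(-1, \pm\sqrt{-3 g(2)})$ because $g(2) \neq 0$ by smoothness of $X$. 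Thus there are three distinct preimages and $\phi$ is unramified over $(2,0)$; the point $(-2,0)$ is handled identically. Over a point of $X$ with $x \notin \{2,-2,\infty\}$ the three roots $z$ of $z^3 - 3z = x$ are distinct with $z \neq \pm 1$, so $w = y/(z^2-1)$ is determined and $\phi$ is again unramified.

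Finally, over the point at infinity I would argue by valuations. Since $\deg f$ is odd, $X$ has a single Weierstrass point $P_\infty$ over $x = \infty$ with $v_{P_\infty}(x) = -2$, and likewise $Y$ (whose defining polynomial in $z$ again has odd degree) has a single point $Q_\infty$ over $z = \infty$ with $v_{Q_\infty}(z) = -2$. Then $v_{Q_\infty}(\phi^* x) = v_{Q_\infty}(z^3 - 3z) = 3\,v_{Q_\infty}(z) = -6$, so the ramification index of $\phi$ at $Q_\infty$ equals $(-6)/(-2) = 3$, and the cover is totally ramified over $P_\infty$. As this is the only branch point, $\phi$ is a Parshin cover, and Riemann--Hurwitz then recovers the genus of $Y$ as $3 g(X) - 1$.

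The step I expect to be the main obstacle is the ramification bookkeeping over $\{2, -2, \infty\}$, where the behaviour of $\pi$ and of $\phi$ differ. The double ramification of $\pi$ over $x = \pm 2$ does \emph{not} descend to ramification of $\phi$, precisely because the base map $X \to \PP^1$ is itself doubly ramified over those points, so the corresponding branch of the fibre product splits (locally $\tau^2 = \zeta^2$); by contrast the triple ramification over $x = \infty$ survives because it is matched against a double ramification with $\gcd(2,3) = 1$ (locally $\tau^2 = \zeta^3$, a unibranch cusp). Capturing these local pictures correctly --- whether through the explicit three-preimage count above or through local uniformizers --- is the crux of the argument, whereas the algebraic identification of the two models of $Y$ is routine.
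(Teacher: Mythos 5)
Your proof is correct, but it takes a genuinely different route from the paper's. The paper disposes of the proposition in two lines by invoking Theorem \ref{thm:cubclos} and the standard-form ramification criteria of \cite{MWcubic}: for a cubic extension $z^3 - 3z - \alpha$, total ramification sits at the poles of $\alpha$ of order prime to $3$ (here $\alpha = -x$ has a double pole at the Weierstrass point at infinity), and partial ramification sits at the zeros of $\alpha^2 - 4 = x^2 - 4$ of odd valuation --- and there are none, since $(x^2-4) \mid f$ forces $(\pm 2, 0)$ to be Weierstrass points of $X$, where $x \mp 2$ vanishes to even order. That argument makes transparent why the hypothesis $(x^2-4)\mid f$ is there at all. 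You instead carry out the analysis by hand, viewing $\phi$ as (the normalization of) the fibre product of the hyperelliptic projection with $\pi(z) = z^3 - 3z$, verifying the identity $(z^3-3z)^2 - 4 = (z^2-4)(z^2-1)^2$, counting preimages over $x = \pm 2$ and over generic points, and computing the ramification index at infinity via valuations. Your bookkeeping is accurate (including the observation that the double ramification of $\pi$ over $x = \pm 2$ is absorbed by the double ramification of the hyperelliptic map, while the triple ramification over $\infty$ survives against it because $\gcd(2,3)=1$), and your handling of the sign discrepancy between $z^3 = 3z - x$ and $x = z^3 - 3z$ via $z \mapsto -z$ is fine. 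What your approach buys is self-containedness --- no appeal to the classification machinery --- at the cost of length; what it quietly assumes is that $Y$ is an irreducible smooth curve, so that ``degree-three cover'' makes sense. This does hold (the polynomial $(z^2-4)\,g(z^3-3z)$ has odd degree $2 + 3\deg g$, hence is a non-square in $k(z)$, and it is separable because $g(\pm 2) \neq 0$ and $g$ is separable by smoothness of $X$), but you should say so explicitly if you want the argument to stand alone.
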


\begin{proof}
  This follows from Theorem \ref{thm:cubclos}: If we take $\alpha = -x$, then the total ramification of the resulting cover occurs at the double pole of $x$, which is the Weierstrass point of $X$ at infinity. Partial ramification occurs at the zeros of $\alpha^2 - 4 = x^2 - 4$ on~$X$ whose valuation is odd, but by construction of $X$ no such zeros exist. The final statement follows by noting that evaluating $x^2 - 4$ at $z^3 - 3 z$ yields $(z^2 - 4)(z^2 - 1)^2$.
\end{proof}

\begin{remark}
  A similar statement to that of Proposition \ref{prop:par-g2} holds for the curves
  \begin{equation}\label{eq:Xquadfac}
    X : y^2 = f (x) = (x^2 - 4 c^3) g (x)
  \end{equation}
  and
\begin{equation*}
    Y : w^2 = (z^2 - 4 c) g (z^3 - 3 c z),
\end{equation*}
  between which there is the map
\begin{equation*}
    \begin{split}
      \phi : Y & \to X \\
      (z, w) & \to (z^3 - 3 c z, w (z^2 - c)) .
    \end{split}
\end{equation*}
  This in fact suffices to find all Parshin covers $\phi$ defined over a given non-algebraically closed base field~$k$ that are ramified over a Weierstrass point of $X$. Indeed, since this Weierstrass branch point is unique, it is defined over $k$ and we may place it at infinity. Moreover, if there exists a Parshin cover of $X$ over $k$, then the defining polynomial of $X$ contains a quadratic factor. Indeed, as is also mentioned below, inspecting the purely cubic closure of $Y \to X$ yields the existence of a degree-two étale cover of $X$, which by uniqueness is defined over $k$ if $\phi$ is. This étale cover $X$ is defined by an equation $z^2 = h$, where $h$ is a function whose divisor is $2$-divisible and which therefore corresponds to a two-torsion element of $X$ that is defined over the base field. This torsion element corresponds to a quadratic factor of the defining polynomial of $X$. Choosing $c$ appropriately, we can therefore find a defining equation of the form \eqref{eq:Xquadfac}, and with it a Parshin cover defined over $k$.
\end{remark}

We conclude by indicating how to construct Parshin covers of genus-two curves that ramify over non-Weierstrass points. Giving closed formulas ceases to be useful, and instead we give an explicit procedure to construct such covers. A corresponding implementation can be found at \cite{parshin-code}. From now on, we start using the fact that $k$ is algebraically closed in an essential manner; in Remark \ref{rem:final} non-algebraically closed base fields will briefly be considered.

So let $X$ be a genus-two curve over $k$. The purely cubic closure of a Parshin cover $\phi : Y \to X$ corresponds to a degree-two étale cover $W \to X$, for which $W$ is again a hyperelliptic curve. Because of our hypothesis $k = \kbar$, we may assume that~$W$ admits a defining equation
\begin{equation}\label{eq:eqwithinv}
  W : v^2 = u^8 + a_3 u^6 + a_2 u^4 + a_1 u^2 + a_0
\end{equation}
and that the map $W \to X$ is the quotient by the involution $i (u, v) = (-u, -v)$, which has no fixed points. We then have
\begin{equation*}
  X : y^2 = x (x^4 + a_3 x^3 + a_2 x^2 + a_1 x + a_0)
\end{equation*}
for $x = u^2$ and $y = u v$. Let $\iota (u, v) = (u, -v)$ be the hyperelliptic involution, and set $j = i \iota = \iota i$.

Let $\widetilde{Q} = (\alpha, \beta)$ be a point on $W$, let $Q = (\alpha^2, \alpha \beta)$ be its image on $X$, and let $\widetilde{Q}' = i (\widetilde{Q}) = (-\alpha, -\beta)$ be the second element of the fibre over $Q$. Let $E = \widetilde{Q} - i (\widetilde{Q})$.

\begin{proposition}\label{prop:tildeP}
  There are exactly two points $\widetilde{P}$ on $W$ such that $3 E \sim i (\widetilde{P}) - \widetilde{P}$, which form a single orbit under $j$.
\end{proposition}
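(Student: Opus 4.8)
The plan is to study the difference map $\pi \colon W \to \Jac(W)$ given by $\tilde{P} \mapsto [\tilde{P} - i(\tilde{P})]$ and to determine its fibre over the class $-3[E]$. Indeed, writing $[E] = [\tilde{Q} - i(\tilde{Q})] = \pi(\tilde{Q})$, the condition $3E \sim i(\tilde{P}) - \tilde{P}$ reads $-\pi(\tilde{P}) = 3\pi(\tilde{Q})$, i.e.\ $\pi(\tilde{P}) = -3\pi(\tilde{Q})$. First I would record the geometry of the three commuting involutions on the genus-three curve $W$: the hyperelliptic involution $\iota$, the fixed-point-free involution $i$ defining the étale cover $W \to X$, and $j = i\iota$. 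Since $j(u,v) = (-u,v)$, a direct check shows $j$ has exactly four fixed points, namely the two points above $u = 0$ and the two points at infinity, so Riemann--Hurwitz gives that $W/j$ has genus one.

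Next I would show that $\pi$ factors through $W/j$. Writing $H$ for the unique hyperelliptic $g^1_2$-class, one has $\tilde{P} + \iota(\tilde{P}) \sim H$, and since $i$ commutes with $\iota$ it preserves $|H|$; hence $j(\tilde{P}) \sim H - i(\tilde{P})$ while $i(j(\tilde{P})) = \iota(\tilde{P}) \sim H - \tilde{P}$. Subtracting yields $\pi(j(\tilde{P})) = \pi(\tilde{P})$, so $\pi = \bar{\pi} \circ q$ where $q \colon W \to W/j$ is the quotient and $\bar{\pi}$ is a morphism from a genus-one curve.

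The key step, which I expect to carry the real content, is to prove that $\pi$ has degree exactly two onto its image, equivalently that $\bar{\pi}$ is an isomorphism onto the (one-dimensional) Prym variety. I would use the hyperelliptic dichotomy: an effective divisor of degree two on $W$ moves in a pencil only if it is linearly equivalent to $H$. If $\pi(\tilde{P}) = \pi(\tilde{R})$, then $\tilde{P} + i(\tilde{R}) \sim \tilde{R} + i(\tilde{P})$; either these degree-two divisors are equal, forcing $\tilde{P} = \tilde{R}$ since $i$ has no fixed points, or both are linearly equivalent to $H$, forcing $i(\tilde{R}) = \iota(\tilde{P})$, i.e.\ $\tilde{P} = j(\tilde{R})$. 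Thus every fibre of $\pi$ is a single $j$-orbit $\left\{ \tilde{R}, j(\tilde{R}) \right\}$, so $\deg \pi = 2$ and $\bar{\pi}$ is a birational morphism of smooth curves, hence an isomorphism.

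It remains to check that the target class lies in $\operatorname{im}(\pi)$, so that the fibre is nonempty. Here I would use that $\operatorname{im}(\pi) = \pi(\tilde{Q}) + A$, where $A = \operatorname{im}(1 - i_*)$ is the one-dimensional Prym variety, since $\pi(\tilde{P}) - \pi(\tilde{Q}) = (1 - i_*)[\tilde{P} - \tilde{Q}] \in A$. As $i_* \pi(\tilde{Q}) = -\pi(\tilde{Q})$, we get $2\pi(\tilde{Q}) = (1 - i_*)\pi(\tilde{Q}) \in A$, and therefore $-3\pi(\tilde{Q}) = \pi(\tilde{Q}) - 4\pi(\tilde{Q}) \in \pi(\tilde{Q}) + A = \operatorname{im}(\pi)$. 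Combined with the degree-two statement, the fibre over $-3\pi(\tilde{Q})$ is a single $j$-orbit, yielding exactly two points $\tilde{P}$ (distinct provided the corresponding point of $W/j$ is not a branch point of $q$, which holds for $\tilde{Q}$ in general position). This is exactly the assertion; the main obstacle is the degree-two claim, and everything else becomes formal once the hyperelliptic pencil argument is in place.
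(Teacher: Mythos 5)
Your proof is correct and follows essentially the same route as the paper: existence comes from the Prym variety of the \'etale double cover $W \to X$ being one-dimensional, so that the image of the Abel--Prym-type map $\pi$ is a full translate of it, and uniqueness up to $j$ comes from the uniqueness of the $g^1_2$ on the hyperelliptic curve $W$ together with $i$ having no fixed points. Your coset computation $-3\pi(\widetilde{Q}) = \pi(\widetilde{Q}) - 4\pi(\widetilde{Q}) \in \pi(\widetilde{Q}) + A$ is a welcome sharpening of the paper's terse ``the class of $3E$ is in the image by connectedness'', since it identifies exactly which translate of the Prym is hit and makes visible where the oddness of $3$ enters; the factorization through the genus-one quotient $W/j$ is additional structure that the paper leaves implicit.
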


\begin{proof}
  Since $i$ has a quotient of genus two, the subspace of $\Jac (W)$ that is invariant under $i$ is of dimension two, and the anti-invariant complement, that is, the Prym variety of $W \to X$, has dimension one and can be described as the (connected) image of $i - 1$. The class of $3 E$ is in the image by connectedness. (More naively, one can calculate an explicit representation in Mumford coordinates \cite[\S IIIa.2]{mumford-tata2} as in \texttt{universal.m} in~\cite{parshin-code}.)

  We conclude that there exists a point $\widetilde{P}$ with $3 E \sim i (\widetilde{P}) - \widetilde{P}$. If $\widetilde{P}'$ is another such point, then $i (\widetilde{P}) - \widetilde{P} \sim i (\widetilde{P}') - \widetilde{P}'$, hence $i (\widetilde{P}) + \widetilde{P}' \sim i (\widetilde{P}') + \widetilde{P}$ and $\widetilde{P}' = \widetilde{P}$ or $\widetilde{P}' = \iota (i (\widetilde{P})) = j (\widetilde{P})$ by the uniqueness of the linear system $g_2^1$ on a hyperelliptic curve (of degree one in dimension two, provided by the canonical divisor) and the fact that $i$ has no fixed points.
\end{proof}

We will construct a Parshin cover of $X$ that ramifies over the image $P$ of $\widetilde{P}$ on~$X$ by descending a cyclic cover of $W$ that ramifies over $\widetilde{P}$ and $i (\widetilde{P})$. To this end, we follow the method of proof of Theorem~\ref{thm:norms}. By construction, there exists a rational function $f$ on $W$ such that
\begin{equation}\label{eq:fdef}
  (f) = i (\widetilde{P}) + 3 i (\widetilde{Q}) - \widetilde{P} - 3 \widetilde{Q}.
\end{equation}

\begin{theorem}\label{thm:parshin}
  Let $f$ be as \eqref{eq:fdef}, let $f i^* (f) = \lambda \in k$, and let $g = \lambda f$. Then the cover of $X$ defined by
  \begin{equation}\label{eq:parcov}
    z^3 = 3 c z + \alpha
  \end{equation}
  with
\begin{equation*}
    \alpha = g + i^* (g) = g + \lambda^3 g^{-1}=  \lambda (f + \lambda f^{-1})
\end{equation*}
  is a Parshin cover of $X$ that ramifies over $P$.
\end{theorem}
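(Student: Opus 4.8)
The plan is to mirror the descent argument used in the proof of Theorem \ref{thm:norms}, now carried out over the non-rational base $X$. First I would verify that the function $f$ in \eqref{eq:fdef} genuinely exists: its prescribed divisor has degree zero, and Proposition \ref{prop:tildeP} guarantees that $i(\widetilde{P}) - \widetilde{P} \sim 3E = 3(\widetilde{Q} - i(\widetilde{Q}))$, so that $i(\widetilde{P}) + 3i(\widetilde{Q}) - \widetilde{P} - 3\widetilde{Q}$ is principal; this is exactly the $3$-divisibility condition in the Prym variety that was flagged as the obstruction in the higher-genus remark following Corollary \ref{cor:allwithclos}, and which Proposition \ref{prop:tildeP} has already resolved.

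Next I would show that $\lambda := f \, i^*(f)$ lies in $k$ (not merely in the function field of $W$). The key computation is that $i$ is an involution, so applying $i^*$ to the divisor in \eqref{eq:fdef} negates it; hence $(f \, i^*(f)) = 0$, which forces $f\, i^*(f)$ to be a constant since $k$ is algebraically closed and $W$ is a complete curve. Setting $g = \lambda f$ then gives $g \, i^*(g) = \lambda^2 \cdot \lambda = \lambda^3$, so that $i^*(g) = \lambda^3/g$, exactly as in the scalar bookkeeping of Theorem \ref{thm:norms}. This lets me define a $\mu_3$-cover $v^3 = g$ of $W$ and prolong the involution $i$ by sending the cube-root generator to $\lambda/(\text{generator})$, which is compatible with $v^3 = g$ precisely because $i^*(g) = \lambda^3 g^{-1}$.

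I would then identify the quotient of this prolonged cover with the equation \eqref{eq:parcov}. Writing $z = v + \lambda v^{-1}$ (the invariant trace under the prolonged involution), the same expansion as in Remark \ref{rem:min} gives $z^3 = 3\lambda z + (g + \lambda^3 g^{-1})$, so that with $c = \lambda$ and $\alpha = g + i^*(g) = \lambda(f + \lambda f^{-1})$ we recover \eqref{eq:parcov}. It remains to read off the ramification: the places where $z^3 = 3cz + \alpha$ is totally ramified are the poles of $\alpha$ of order prime to $3$, and partial ramification occurs at the odd-order zeros of $\alpha^2 - 4c^3$, by the analysis recalled at the end of Remark \ref{rem:min}. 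By \eqref{eq:fdef} the function $g$ has a simple pole at $i(\widetilde{P})$ and simple zero at $\widetilde{P}$ (plus the paired contributions at $\widetilde{Q}, i(\widetilde{Q})$ of order three, which do \emph{not} contribute ramification), so $\alpha = g + \lambda^3 g^{-1}$ has a single simple pole downstairs at the image $P$ of $\widetilde{P}$ on $X$, giving full ramification there and nowhere else among the poles; the triple-order contributions at $Q$ are invisible by Kummer theory.

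The main obstacle I anticipate is the ramification accounting rather than the descent formalism. One must check carefully that no \emph{partial} ramification is introduced, i.e.\ that $\alpha^2 - 4c^3$ has no odd-order zeros on $X$ — this is the analogue of the partial-ramification vanishing exploited in Proposition \ref{prop:par-g2}, and it is what makes $\phi$ a genuine Parshin cover with a single branch point. The cleanest route is to observe that $\alpha^2 - 4\lambda^3 = (g - \lambda^3 g^{-1})^2 = (g - i^*(g))^2$ is, up to the factor $g^{-2}$ (which contributes only even orders), the square of the anti-invariant function $g - i^*(g)$; hence every zero of $\alpha^2 - 4c^3$ has even order, so no partial ramification arises. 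Verifying the degree-three multiplicities at $Q$ and $i(\widetilde{Q})$ are exactly divisible by three — so that these places remain unramified in the descended cover — is the remaining point requiring care, but it follows directly from the exponent $3$ attached to $\widetilde{Q}$ and $i(\widetilde{Q})$ in \eqref{eq:fdef}.
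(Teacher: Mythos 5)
Your proof is correct and follows essentially the same route as the paper's, which likewise reduces everything to the descent mechanism of Theorem \ref{thm:norms} together with the quotient identification and ramification criteria of Remark \ref{rem:min}; your explicit observation that $\alpha^2 - 4c^3 = (g - i^*(g))^2$ is a square is just a hands-on version of the paper's one-line remark that the purely cubic closure of the descended cover is the \emph{étale} cover $W$, so no partial ramification can occur. (One trivial slip: by \eqref{eq:fdef}, $g$ has its simple pole at $\widetilde{P}$ and its simple zero at $i(\widetilde{P})$, not the other way around --- immaterial, since both points lie over $P$.)
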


\begin{proof}
  This follows from the proof of Theorem \ref{thm:norms} (whose steps can still be following because we have guaranteed that the divisor involved is principal) along with Remark \ref{rem:min}, which together show that the cover $w^3 = g$ of $W$ descends to \eqref{eq:parcov}. This latter cover has purely cubic closure $W$ by construction and therefore does not have any partial ramification. Its total ramification occurs over the fibre of $P$, which contains the points $i (\widetilde{P})$ and $\widetilde{P}$ where the function $g$ has a valuation that is not a multiple of three.
\end{proof}

The techniques above furnish a Parshin fibration of $X$ when varying the point $\widetilde{Q} \in W$. Alternatively, one applies the construction for the canonical point $(u, v)$ of~$W$ over its function field $k (W) = k (u, v)$.

\begin{example}
  Consider the hyperelliptic curve
\begin{equation*}
    X : y^2 = x (x^4 + 4 x^3 + 4 x^2 - 5)
\end{equation*}
  over the base field $k = \QQ$. It has the rational point $Q = (1, 2)$, as well as an étale cover
\begin{equation*}
    W : y^2 = x^8 + 4 x^6 + 4 x^4 - 5
\end{equation*}
  defined over $\QQ$. On $W$, the point $Q$ lifts to $\widetilde{Q} = (1, 2)$ and $\widetilde{Q}' = (-1, -2)$. Using the Mumford representation with respect to the divisor of $X$ at infinity, the divisor $\widetilde{Q} - i (\widetilde{Q})$ corresponds to the pair $(x^2 - 1, 2)$. Multiplying this with $3$, we obtain the Mumford coordinates $(x^2 - 49/9, 3278/81)$, which corresponds to the divisor $i (\widetilde{P}) - \widetilde{P}$ for $\widetilde{P} = (7/3, -3278/81)$, which has image $P = (49/9, -22946/243)$ on $X$. Using our construction, we find a Parshin cover of $X$ which ramifies over $P$. It is defined by
\begin{equation*}
    z^3 = 15 z + \alpha
\end{equation*}
  with
\begin{equation*}
    \alpha = \frac{(-320 x - 480) y + (210 x^4 + 1320 x^3 - 6860 x^2 + 2680 x + 1050)}{9 x^4 - 76 x^3 + 174 x^2 - 156 x + 49} .
\end{equation*}
\end{example}

\begin{remark}\label{rem:final}
  There remain considerable rationality obstructions when extending the above procedure to the case where the base field $k$ is not algebraically closed. Firstly, we may then not be able to write the étale cover $W$ of $X$ in the form \eqref{eq:eqwithinv}. Secondly, finding $\widetilde{P}$ in Proposition \ref{prop:tildeP} may require a further quadratic extension. Finally, while we have constructed $\widetilde{P}$ starting from $\widetilde{Q}$, it is more difficult to determine $\widetilde{Q}$ given $\widetilde{P}$, as this essentially amounts to $3$-divisibility of the class of $i (\widetilde{P}) - \widetilde{P}$ in the Jacobian of $W$. We have explored these matters to some small extent in \cite{parshin-code}, but leave more precise considerations for future work.
\end{remark}

\section*{Acknowledgments}
We thank Kiran Kedlaya for bringing the problem of determining the covers in Section \ref{sec:parshin} to our attention, a referee for pointing us to the reference \cite{weir-thesis}, and another referee for their careful reading and suggestions.

\end{document}